\newtheorem{introthm}{Theorem}
\newtheorem{theorem}{Theorem}[section]
\newtheorem{lemma}[theorem]{Lemma}
\newtheorem{corollary}[theorem]{Corollary}
\newtheorem{conjecture}[theorem]{Conjecture}
\newcommand{\n}[1]{\overline{#1}}
\newcommand{\Z}{\mathbb{Z}}
\newcommand{\Field}{\mathbb{F}}
\newcommand{\betti}[2]{\beta_{#2}^{#1}}
\newcommand{\abetti}[2]{\alpha_{#2}^{#1}}
\newcommand{\tensor}{\otimes}
\newcommand{\join}{*}
\newcommand{\sgn}{{\rm sgn}}
\newcommand{\brackom}[2]{\genfrac{[}{]}{0pt}{}{#1}{#2}}
\newcommand{\etal}{et al$.$}
\newcommand{\M}[1][{}]{\mathsf{M}_{#1}}
\newcommand{\Mupp}[1][{}]{\Gamma_{#1}}
\newcommand{\Symm}[1]{\mathfrak{S}_{#1}}
\newcommand{\zivaljevic}{$\check{\mathrm{Z}}$ivaljevi\'c}
\newcommand{\shareshian}{Shareshian}
\begin{document}

  \begin{abstract}
    Let $1 \le m \le n$.
    We prove various results about 
    the chessboard complex $\M[m,n]$, which is the simplicial complex of
    matchings in the complete bipartite graph $K_{m,n}$. 
    First, we demonstrate that there is nonvanishing
    $3$-torsion in $\tilde{H}_d(\M[m,n];\Z)$ whenever
    $\frac{m+n-4}{3} \le d \le m-4$ and whenever $6 \le m < n$ and
    $d=m-3$.
    Combining this result with theorems due to 
    Friedman and Hanlon and to {\shareshian} and Wachs,
    we characterize all triples $(m,n,d)$ satisfying
    $\tilde{H}_{d}(\M[m,n];\Z) \neq 0$.
    Second, for each $k \ge 0$, we show that there is a polynomial
    $f_k(a,b)$ of degree $3k$ such that the dimension of
    $\tilde{H}_{k+a+2b-2}(\M[k+a+3b-1,k+2a+3b-1];\Z_3)$, viewed as a
    vector space over $\Z_3$, is at most $f_k(a,b)$ for all $a \ge 0$
    and $b \ge k+2$. Third, we give a computer-free
    proof that $\tilde{H}_2(\M[5,5];\Z) \cong \Z_3$. Several 
    proofs are based on a new long exact sequence relating the
    homology of a certain subcomplex of $\M[m,n]$ to the homology of
    $\M[m-2,n-1]$ and $\M[m-2,n-3]$.    
  \end{abstract}

  \title[$3$-torsion in the Chessboard Complex]{On the $3$-torsion
    Part of the Homology of the Chessboard 
    Complex}
  \author{Jakob Jonsson}
  \thanks{Research supported by European Graduate
    Program
    ``Combinatorics, Geometry, and Computation'', DFG-GRK 588/2.}
  \address{Department of Mathematics, KTH, 10044 Stockholm, Sweden}
  \email{jakobj@math.kth.se}
  \date{}
  \keywords{matching complex, chessboard complex, simplicial
    homology}
  \subjclass{55U10, 05E25}

  \maketitle

  \noindent
  This is a preprint version of a paper published in 
  {\em Annals of Combinatorics} {\bf 14} (2010), No. 4, 487--505.

\section{Introduction}

  Given a family $\Delta$ of graphs on a fixed vertex set, we 
  identify each member of $\Delta$ with its edge set. In particular,
  if $\Delta$ is closed under deletion of edges, then 
  $\Delta$ is an abstract simplicial complex.

  A {\em matching} in a simple graph $G$ is a subset $\sigma$ of the
  edge set of $G$ such that no vertex appears in more than one edge in
  $\sigma$. Let $\M(G)$ be the family of matchings in $G$; $\M(G)$ is
  a simplicial complex. We write $\M[n] = \M(K_n)$
  and $\M[m,n] = \M(K_{m,n})$, where $K_n$ is the complete graph on
  the vertex set $[n] = \{1, \ldots, n\}$
  and $K_{m,n}$ is the complete bipartite graph with block sizes $m$
  and $n$. $\M[n]$ is the {\em matching complex} and 
  $\M[m,n]$ is the {\em chessboard complex}.

  The
  topology of $\M[n]$, $\M[m,n]$, and related complexes has been
  subject to analysis
  in a number of theses \cite{Andersen,Dong,Garst,thesis,Kara,Kson}
  and papers 
  \cite{Ath,BBLSW,BLVZ,Bouc,DongWachs,FH,KRW,RR,ShWa,Ziegvert}; see
  Wachs \cite{Wachs} for an excellent survey and further references.

  Despite the simplicity of the definition, the homology of the
  matching complex $\M[n]$
  and the chessboard complex $\M[m,n]$ 
  turns out to have a complicated structure. The rational homology 
  is well-understood and easy to describe thanks to beautiful results
  due to Bouc \cite{Bouc} and Friedman and Hanlon \cite{FH}, but very
  little is known about the integral homology and the homology over
  finite fields. 

  A previous paper \cite{bettimatch} contains a number of results
  about the integral homology of the matching complex $\M[n]$. The
  purpose of the present paper is to extend a few of these results to
  the chessboard complex $\M[m,n]$.

  For $1 \le m \le n$, define
  \[
  \nu_{m,n} = \min \{m-1, \lceil \mbox{$\frac{m+n-4}{3}$}\rceil\}
  = \left\{
  \begin{array}{ll}
    \lceil \frac{m+n-4}{3}\rceil & \mbox{if } m \le n \le 2m+1; \\
    m-1 & \mbox{if } n \ge 2m-1.
  \end{array}
  \right.
  \]
  Note that $\lceil \frac{m+n-4}{3}\rceil = m-1$ for $2m-1 \le n \le 2m+1$.
  By a theorem due to {\shareshian} and Wachs \cite{ShWa},
  $\M[m,n]$  contains nonvanishing homology in degree
  $\nu_{m,n}$ for all $m,n \ge 1$ except $(m,n) = (1,1)$. 
  Previously, Friedman and Hanlon demonstrated that this bottom
  nonvanishing homology group is finite if and only if $m \le n \le
  2m-5$ and $(m,n) \notin \{(6,6),(7,7),(8,9)\}$. 

  To settle their theorem, {\shareshian} and Wachs demonstrated that
  $\tilde{H}_{\nu_{m,n}}(\M[m,n];\Z)$ contains nonvanishing
  $3$-torsion whenever the group is finite.
  One of our main results provides upper bounds on the rank of
  the $3$-torsion part. Specifically, in
  Section~\ref{boundschess-sec}, we prove the following:
  \begin{introthm}
    For each $k \ge 0$, $a \ge 0$, and $b \ge k+2$, we have that
    $\dim \tilde{H}_{k+a+2b-2}(\M[k+a+3b-1,k+2a+3b-1];\Z_3)$ is bounded by a
    polynomial in $a$ and $b$ of degree $3k$.
    \label{boundsintro-thm}
  \end{introthm}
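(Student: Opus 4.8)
The plan is to prove the bound by induction on $k$, working over $\Z_3$ throughout, so that every homology group in sight is a finite-dimensional $\Z_3$-vector space and all the exact sequences in play are sequences of $\Z_3$-modules. Set $m=k+a+3b-1$, $n=k+2a+3b-1$, and $d=k+a+2b-2$, so that $n-m=a\ge 0$ and $d=m-b-1$. Since $b\ge k+2$, a direct computation gives $m\le n\le 2m-5$ with $(m,n)\notin\{(6,6),(7,7),(8,9)\}$, so the bottom nonvanishing group $\tilde{H}_{\nu_{m,n}}(\M[m,n];\Z)$ is finite by Friedman--Hanlon \cite{FH}, and moreover $d-\nu_{m,n}=\lfloor k/3\rfloor$, so $d$ lies only a bounded distance above the bottom of the nonvanishing range. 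The engine is our long exact sequence relating the homology of a subcomplex $\Sigma\subseteq\M[m,n]$ to that of $\M[m-2,n-1]$ and $\M[m-2,n-3]$, together with the standard observation that in the pair $(\M[m,n],\Sigma)$ the relative chain complex splits as a direct sum, indexed by the vertices involved in the faces outside $\Sigma$, of shifted chain complexes of smaller chessboard complexes.

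Tracing these sequences in homological degree $d$ yields an estimate of the form
\[
\dim\tilde{H}_d(\M[m,n];\Z_3)\ \le\ P_1\cdot\dim\tilde{H}_{d-2}(\M[m-2,n-1];\Z_3)\ +\ P_2\cdot\dim\tilde{H}_{d-2}(\M[m-2,n-3];\Z_3)\ +\ R,
\]
where $P_1$ and $P_2$ are products of at most three factors each linear in $a$ and $b$ (they are the sizes of the vertex index sets appearing in the relative terms) and $R$ collects finitely many further contributions. The key parameter bookkeeping is that, in degree $d-2$, the complex $\M[m-2,n-1]$ is the one attached to the triple $(k-3,a+1,b)$ at its own natural degree, and $\M[m-2,n-3]$ to the triple $(k-1,a-1,b)$ (possibly after using $\M[p,q]=\M[q,p]$ together with a compensating shift of $b$ when $a=0$); in both cases the first parameter has strictly decreased while $b$ is unchanged, and every complex occurring in $R$ likewise has first parameter at most $k-1$, or else has homology in a degree outside its nonvanishing range. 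The latter terms vanish by the known connectivity and dimension bounds for chessboard complexes ({\shareshian}--Wachs \cite{ShWa}, Friedman--Hanlon \cite{FH}); when $k\in\{1,2\}$ one checks in addition that the $\M[m-2,n-1]$ term itself sits below the Shareshian--Wachs connectivity bound and hence vanishes, leaving only the $(k-1,\cdot,\cdot)$ branch.

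For the base case $k=0$ we have $d=\nu_{m,n}$ and must show that $\dim\tilde{H}_{\nu_{m,n}}(\M[m,n];\Z_3)$ is bounded by an absolute constant for all $a\ge 0$, $b\ge 2$. Here both main terms of the estimate above vanish, since in degree $\nu_{m,n}-2$ the complexes $\M[m-2,n-1]$ and $\M[m-2,n-3]$ lie below their connectivity bounds; thus $\tilde{H}_{\nu_{m,n}}(\M[m,n];\Z_3)$ is controlled entirely by $R$, and a secondary induction on $a+b$ — in which the boundary cases ($b=2$, $a=0$) and the first few values are settled directly or quoted from Bouc \cite{Bouc}, Friedman--Hanlon \cite{FH}, and {\shareshian}--Wachs \cite{ShWa} — shows this is bounded by a constant. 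Feeding the base case back into the estimate, the inductive hypothesis makes $\dim\tilde{H}_{d-2}(\M[m-2,n-3];\Z_3)$ a polynomial in $a,b$ of degree $3(k-1)$ and $\dim\tilde{H}_{d-2}(\M[m-2,n-1];\Z_3)$ one of degree $3(k-3)$; multiplying by $P_2$ respectively $P_1$ (of degree at most $3$) and adding the bounded contribution of $R$ gives a polynomial bound of degree at most $3(k-1)+3=3k$, completing the induction.

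The crux is the bookkeeping behind the displayed estimate: one must identify exactly the relative terms of the long exact sequence and of the pair $(\M[m,n],\Sigma)$, verify at every occurrence the trichotomy ``first parameter at most $k-1$ / first parameter $0$ and $a+b$ smaller / homological degree out of range'', and --- this is the one point where the exponent could blow up --- confirm that each occurrence is weighted by a multiplicity that is a product of at most \emph{three} linear factors rather than more. The vanishing assertions for the terms in $R$ and for the $k\le 2$ degenerations rest on having sharp connectivity and top-degree bounds for chessboard complexes, and the finitely many genuinely exceptional small cases, including $(5,5)$, must be treated by hand.
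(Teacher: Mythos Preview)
Your parameter bookkeeping is off, and this is fatal to the induction as you set it up. In the $\Gamma$-21-23 sequence the two main terms appear in degrees $d-1$ and $d-2$, not both in degree $d-2$: the sequence bounds $\tilde{H}_d(\Mupp[m,n])$ by $\tilde{H}_{d-1}(\M[m-2,n-1])$ and copies of $\tilde{H}_{d-2}(\M[m-2,n-3])$. Under the change of variables, the triple $(m-2,n-1,d-1)$ corresponds to $(k,\,a+1,\,b-1)$, \emph{not} to $(k-3,a+1,b)$; only the $(m-2,n-3,d-2)$ term drops $k$ (to $k-1$). Thus the recursion does \emph{not} strictly decrease the first parameter: there is always a term at the same $k$ with shifted $(a,b)$. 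Your assertion that ``in both cases the first parameter has strictly decreased while $b$ is unchanged'' is simply false, and with it your induction collapses.

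The paper's proof confronts exactly this issue. Combining the two sequences gives (in the symmetric form)
\[
\hat{\beta}_{k}^{a,b}\ \le\ \hat{\beta}_{k}^{a-1,b}\ +\ (k+2a+3b-3)\,\hat{\beta}_{k-1}^{a,b}\ +\ 2\binom{k+a+3b-2}{2}\,\hat{\beta}_{k-1}^{a+1,b-1},
\]
with a same-$k$ term $\hat{\beta}_k^{a-1,b}$ that cannot be dumped into a ``bounded remainder'' $R$. The induction is closed by \emph{summing over $a$}: telescoping the same-$k$ term reduces to $\hat{\beta}_k^{0,b}$ plus a sum of $k{-}1$ terms, and it is this summation (not a cubic multiplicity $P_2$) that raises the degree of the bound from $3k-1$ to $3k$. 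One then needs a separate estimate on $\hat{\beta}_k^{0,b}$, obtained by iterating once more and summing over $b$. Finally, the base case $k=0$ is not gotten from the recursion at all (your ``both main terms vanish'' is wrong for the $d-1$ term, which sits exactly at the bottom nonvanishing degree of $\M[m-2,n-1]$); it is quoted from the Shareshian--Wachs result that $\tilde{H}_{\nu_{m,n}}(\M[m,n];\Z)\cong\Z_3$ for $m+n\equiv 1\pmod 3$, giving $\hat{\beta}_0^{a,b}=1$ directly.
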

  An equivalent way of expressing 
  Theorem~\ref{boundsintro-thm} is to say that
  \[
  \dim \tilde{H}_{d}(\M[m,n];\Z_3) \le f_{3d-m-n+4}(n-m,m-d-1)
  \]
  whenever $m \le n \le 2m-5$ and 
  $\frac{m+n-4}{3} \le d \le \frac{2m+n-7}{4}$,
  where $f_k$ is a polynomial of degree $3k$ for each $k$. 
  The bound in Theorem~\ref{boundsintro-thm} remains true over any
  coefficient field.

  Note that we express the theorem in terms of 
  the following transformation: 
  \begin{equation}
    \left\{
    \begin{array}{ccrcrcrcl}
      k &\!\!=&\!\! -m&\!\!-&\!\!n&\!\!+&\!\! 3d&\!\!+&\!\!4
      \\
      a &\!\!=&\!\! -m&\!\!+&\!\!n
      \\
      b &\!\!=&\!\! m&\!\! &\!\! &\!\!-&\!\!d &\!\! - &\!\!1
    \end{array}
    \right. \Leftrightarrow
    \left\{
    \begin{array}{ccrcrcrcl}
      m &\!\!=&\!\! k&\!\!+&\!\!a&\!\!+&\!\!3b&\!\!-&\!\!1 \\
      n &\!\!=&\!\! k&\!\!+&\!\!2a&\!\!+&\!\!3b&\!\!-&\!\!1 \\
      d &\!\!=&\!\! k&\!\!+&\!\!a&\!\!+&\!\!2b&\!\!-&\!\!2.
    \end{array}
    \right.   
    \label{mnd2kab-eq}
  \end{equation}
  Assuming that $m \le n$, each of the three new variables
  measures the difference between two important parameters: 
  \begin{itemize}
  \item
    For $m \le n \le 2m+1$, we have that $k$ measures the difference
    between the degree $d$ and the bottom degree in which $\M[m,n]$
    has nonvanishing homology; $\frac{k}{3} = d - \frac{m+n-4}{3}$. 
  \item
    $a$ is the difference between the block sizes $n$ and $m$.
  \item 
    $b$ is the difference between $\dim \M[m,n] = m-1$ and $d$.
  \end{itemize}

  To establish Theorem~\ref{boundsintro-thm}, we 
  introduce two new long exact sequences;
  see Sections~\ref{exseq00-G-11-sec} and \ref{exseqG-21-23-sec}. 
  These two sequences involve the subcomplex $\Mupp[m,n]$ of
  $\M[m,n]$ obtained by fixing a vertex in the block of
  size $n$ and removing all but two of the 
  edges that are incident to this vertex. 
  Our first sequence is very simple and relates the homology of
  $\M[m,n]$ to that of $\Mupp[m,n]$ and $\M[m-1,n-1]$. Our second 
  sequence is more complicated and relates $\Mupp[m,n]$ to
  $\M[m-2,n-1]$ and $\M[m-2,n-3]$. 
  Combining the two sequences and ``cancelling out''
  $\Mupp[m,n]$, we obtain a bound 
  on the dimension of the $\Z_3$-homology of $\M[m,n]$ in terms of 
  $\M[m-1,n-1]$, $\M[m-2,n-1]$, and $\M[m-2,n-3]$. 
  By an induction argument, we obtain 
  Theorem~\ref{boundsintro-thm}.

  For $k=0$, Theorem~\ref{boundsintro-thm} says that 
  $\dim \tilde{H}_{\nu_{m,n}}(\M[m,n];\Z_3)$ is bounded by a constant
  whenever $m \le n \le 2m-5$ and $m+n \equiv 1 \pmod{3}$. 
  Indeed, {\shareshian} and Wachs \cite{ShWa} proved that
  $\tilde{H}_{\nu_{m,n}}(\M[m,n];\Z) \cong \Z_3$ 
  for any $m$ and $n$ satisfying these equations. 
  Their proof was by induction on $m+n$ and relied on
  a computer calculation of $\tilde{H}_2(\M[5,5];\Z)$. 
  In Section~\ref{bottomchess-sec}, we provide a
  computer-free proof that $\tilde{H}_2(\M[5,5];\Z) \cong \Z_3$,
  again using the exact sequences from Sections~\ref{exseq00-G-11-sec}
  and \ref{exseqG-21-23-sec}.

  In Section~\ref{higherchess-sec},
  we use results about the matching complex $\M[n]$ from a previous
  paper \cite{bettimatch} to extend 
  {\shareshian} and Wachs' $3$-torsion result 
  to higher-degree groups: 
  \begin{introthm}
    For $m+1 \le n \le 2m-5$, there is $3$-torsion
    in $\tilde{H}_{d}(\M[m,n]; \Z)$ whenever
    $\frac{m+n-4}{3} \le d \le m-3$.
    There is also $3$-torsion
    in $\tilde{H}_{d}(\M[m,m]; \Z)$ whenever
    $\frac{2m-4}{3} \le d \le m-4$.
    \label{nonvan3intro-thm}
  \end{introthm}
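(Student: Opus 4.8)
The plan is to prove Theorem~\ref{nonvan3intro-thm} by induction on $m$, using the two long exact sequences of Sections~\ref{exseq00-G-11-sec} and~\ref{exseqG-21-23-sec} to transport $3$-torsion from smaller chessboard complexes up into $\tilde{H}_d(\M[m,n];\Z)$, with the bottom-degree torsion of {\shareshian} and Wachs~\cite{ShWa} and the matching-complex torsion of~\cite{bettimatch} supplying the base of the induction.

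For the inductive step I would splice the two new exact sequences and cancel the auxiliary subcomplex $\Mupp[m,n]$ --- precisely the device used to prove Theorem~\ref{boundsintro-thm} --- to obtain, for each $d$, an exact relation between $\tilde{H}_d(\M[m,n])$ and the groups $\tilde{H}_{d-1}(\M[m-1,n-1])$, $\tilde{H}_{d-1}(\M[m-2,n-1])$, $\tilde{H}_{d-1}(\M[m-2,n-3])$, together with a few neighbouring homology groups of these same complexes. Using the description by Friedman and Hanlon~\cite{FH} of the bottom nonvanishing degree together with the hypothesis $d \le m-3$ (resp.\ $d \le m-4$ when $n=m$), I would check that the potentially troublesome neighbouring groups are zero or finite and, more specifically, free of $3$-torsion in the relevant degree; the role of the codimension-two cutoff for $n<m$ (codimension-three for $n=m$) is exactly to force this.

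The transfer itself then reads: if $\tilde{H}_{d-1}(\M[m',n'];\Z)$ contains an element of order $3$ for one of the three predecessor pairs $(m',n') \in \{(m-1,n-1),(m-2,n-1),(m-2,n-3)\}$ lying in the inductive region, then so does $\tilde{H}_d(\M[m,n];\Z)$, because the corresponding map in the spliced sequence is then injective on the order-$3$ part. For the base of the induction I would use: small $(m,n)$ --- in particular $\tilde{H}_2(\M[5,5];\Z) \cong \Z_3$, re-proved computer-free in Section~\ref{bottomchess-sec}; for $d = \nu_{m,n}$, the bottom-degree $3$-torsion of {\shareshian} and Wachs, available since in the region $m+1 \le n \le 2m-5$ (and for $n=m$) the bottom group is finite by~\cite{FH}; and the nonvanishing $3$-torsion in the homology of the matching complexes $\M[r]$ from~\cite{bettimatch}, which is fed in --- through a comparison of $\M[m,n]$ with a suitable matching complex --- to seed those triples the downward moves cannot reach, notably along the boundary $n = 2m-5$ and in the square case $n=m$. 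Finally I would verify, most transparently after the substitution~\eqref{mnd2kab-eq}, that every triple $(m,n,d)$ in the claimed region is joined to a base case by a chain of moves $(m,n,d) \mapsto (m-1,n-1,d-1)$, $(m-2,n-1,d-1)$, or $(m-2,n-3,d-1)$ that never leaves the region.

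The hard part will be the transfer step --- guaranteeing that an order-$3$ class is not annihilated when it passes through a connecting homomorphism of the spliced sequence. This is what forces the careful accounting of which neighbouring groups are finite or $3$-torsion-free (hence the codimension restrictions in the statement) and the occasional switch to $\Z_3$ coefficients together with the universal coefficient theorem, noting that a free $\Z$-summand contributes only to the ``$-\otimes\Z_3$'' part of $\Z_3$-homology and so cannot cancel the ``$\mathrm{Tor}$'' part carrying the torsion; and it is the place where the precise \emph{shape} of the spliced exact sequence, not merely its existence, is needed.
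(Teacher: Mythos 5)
Your plan hinges on a ``transfer step'' that is not merely hard but unsupported, and whose hypotheses fail in the region you need. To push an element of order $3$ from, say, $\tilde{H}_{d-1}(\M[m-2,n-1];\Z)$ into $\tilde{H}_{d}(\M[m,n];\Z)$ through the spliced 00-$\Gamma$-11 and $\Gamma$-21-23 sequences, you must show that the relevant incoming map (e.g.\ $\iota^*$ from $Q^{m-2,n-1}_{d-1}$ into $\tilde{H}_d(\Mupp[m,n])$, and then the map $\tilde{H}_d(\Mupp[m,n])\to\tilde{H}_d(\M[m,n])$) is injective on that class; by exactness this means controlling the images of the preceding maps, i.e.\ of $\varphi^*$ on $R^{m-2,n-3}_{d-1}$ and of $\bigoplus_s\tilde{H}_d(\M[{[m]\setminus\{s\},[2,n]}])$ in $\tilde{H}_d(\Mupp[m,n])$. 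Your proposed justification --- that the neighbouring groups are ``zero or finite and free of $3$-torsion'' in the relevant degrees --- is exactly what fails: for $\frac{m+n-4}{3}\le d\le m-3$ the neighbouring groups sit in the same torsion regime, they are infinite as soon as $(m-d-1)(n-d-1)\le d+1$ (Theorem~\ref{chessfinite-thm}), and conjecturally (Conjecture~\ref{chesstorsion2-conj}) they all carry $3$-torsion. In this paper the two exact sequences are used only to produce \emph{upper} bounds (Lemma~\ref{dmtchess-lem}); they do not propagate torsion upward by themselves, and if the clean inductive transfer you describe were available, the full Shareshian--Wachs $3$-torsion conjecture would follow, whereas the paper explicitly leaves cases open (e.g.\ $m=n$, $d=m-3$). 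Note also that the proof of Theorem~\ref{m55-thm} has to work hard, with explicit boundaries and a symmetry argument, precisely to control such images in the single case $(5,5)$; nothing in your sketch replaces that work for general $(m,n,d)$.

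The paper's actual route is entirely different and constructive: with $m_0=a+3b-2$, $n_0=2a+3b-3$ it forms the explicit cycle $w_{k+1}=z_{k+1,k+2}\wedge\gamma_{m_0,n_0-1}^{(k+1,k+2)}$, where $z_{k+1,k+2}$ generates the top homology of the orientable pseudomanifold $\M[k+1,k+2]$ and $\gamma_{m_0,n_0-1}$ is the explicit bottom cycle of (\ref{gammamn-eq}). Viewing $w_{k+1}$ inside the matching complex $\M[m+n]$, the case $k=0$ is, after relabelling, Bouc's order-$3$ generator $\gamma_{m_0+n_0+2}$ (Theorem~\ref{bouctor-thm}); the stability theorem from \cite{bettimatch} (Theorem~\ref{torsionallover-thm}, via Corollary~\ref{torsionallover-cor}) then gives nonvanishing of $w_{k+1}\tensor 1$ mod $3$ for all $k\ge 0$, hence, by naturality of the inclusion $\M[m,n]\subset\M[m+n]$, nonvanishing in $\tilde{H}_d(\M[m,n];\Z)\tensor\Z_3$; finally Theorem~\ref{chesstorsion-thm} shows $3\gamma_{m_0,n_0-1}=0$, so $w_{k+1}$ has order exactly $3$. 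Your proposal invokes \cite{bettimatch} only as a source of base cases, but it is this wedge-map injectivity applied for every $k$, not an induction through the chessboard exact sequences, that carries the argument; without it (or some substitute controlling the maps in your spliced sequence) the proposal does not go through.
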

  Note that $m+1 \le n \le 2m-5$ and $\frac{m+n-4}{3} \le d \le m-3$
  if and only if $k\ge 0$, $a \ge 1$, and $b \ge 2$, 
  where $k$, $a$, and $b$ are defined as in (\ref{mnd2kab-eq}).
  Moreover, $m=n$ and $\frac{2m-4}{3} \le d \le m-4$
  if and only if $k\ge 0$, $a = 0$, and $b \ge 3$.

  Our proof of Theorem~\ref{nonvan3intro-thm} relies on properties of
  the top homology group of $\M[k,k+1]$ for different values of $k$;
  this group was of importance also in the work of {\shareshian} and
  Wachs \cite{ShWa}. 

  Thanks to Theorem~\ref{nonvan3intro-thm} and Friedman and Hanlon's
  formula for the rational homology \cite{FH}, we may
  characterize those $(d,m,n)$ satisfying $\tilde{H}_d(\M[m,n];\Z)
  \neq 0$:
  \begin{introthm}
    For $1 \le m \le n$, we have that $\tilde{H}_d(\M[m,n];\Z)$ is
    nonzero if and only if either of the following is true:
    \begin{itemize}
    \item
      $\lceil\frac{m+n-4}{3}\rceil \le d \le m-2$. Equivalently, $k\ge
      0$, $a \ge 0$, and $b \ge 1$.
    \item
      $d= m-1$ and $n \ge m+1$. Equivalently, $k\ge 2-a$, $a \ge
      1$, and $b = 0$.
    \end{itemize}
  \end{introthm}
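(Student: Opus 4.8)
The plan is to deduce the characterization by combining the connectivity bound of Bj\"orner, Lov\'asz, Vre\'cica, and \zivaljevic{} \cite{BLVZ}, the bottom nonvanishing theorem of \shareshian{} and Wachs \cite{ShWa}, Friedman and Hanlon's formula for the rational homology \cite{FH}, and Theorem~\ref{nonvan3intro-thm}, together with one elementary computation. Throughout, fix $1 \le m \le n$ and write $\nu = \nu_{m,n}$; note that $\nu = \lceil\frac{m+n-4}{3}\rceil$ whenever the interval $[\lceil\frac{m+n-4}{3}\rceil, m-2]$ is nonempty, and $\nu = m-1$ precisely when $n \ge 2m-1$.

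\emph{The ``only if'' direction.} Since $\dim \M[m,n] = m-1$, we have $\tilde{H}_d(\M[m,n];\Z) = 0$ for $d \ge m$, and by \cite{BLVZ} the complex $\M[m,n]$ is $(\nu-1)$-connected, so $\tilde{H}_d(\M[m,n];\Z) = 0$ for $d \le \nu-1$. Together these dispose of every degree outside the asserted range except $d = m-1$ with $n = m$. For this case I would show that the top boundary map of $\M[m,m]$ is injective: a facet of $\M[m,m]$ is a perfect matching, that is, a permutation $\sigma \in \Symm{m}$, and each codimension-one face $\sigma \setminus \{e\}$ of it misses exactly one vertex in each block, so $\sigma$ is the only perfect matching containing $\sigma \setminus \{e\}$. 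Hence in the boundary of any top chain the face $\sigma \setminus \{e\}$ receives a contribution from the single basis element $\sigma$, which forces the group of top cycles --- which equals $\tilde{H}_{m-1}(\M[m,m];\Z)$ --- to be trivial. (When $m=n=1$ the complex is a point.)

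\emph{The ``if'' direction.} Let $d$ lie in the asserted range. If $d = \nu$, then $\tilde{H}_\nu(\M[m,n];\Z) \ne 0$ by \cite{ShWa}. If $d = m-1$ with $n \ge m+1$, then either $n \ge 2m-1$, so that $d = \nu$ and we are done, or $m+1 \le n \le 2m-2$, in which case the Friedman--Hanlon formula \cite{FH} gives $\tilde{H}_{m-1}(\M[m,n];\Q) \ne 0$ and hence $\tilde{H}_{m-1}(\M[m,n];\Z) \ne 0$. There remain the degrees with $\nu < d \le m-2$. By Theorem~\ref{nonvan3intro-thm}, $\tilde{H}_d(\M[m,n];\Z)$ contains $3$-torsion whenever $d \le m-3$ and $m+1 \le n \le 2m-5$, and whenever $d \le m-4$ and $n = m$. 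The only degrees $d$ with $\nu < d \le m-2$ not covered this way are $d = m-2$ and, in the case $n = m$ alone, $d = m-3$; for these one checks from the Friedman--Hanlon formula that $\tilde{H}_d(\M[m,n];\Q) \ne 0$, so that $\tilde{H}_d(\M[m,n];\Z) \ne 0$ as well. (Only the Friedman--Hanlon Betti numbers in degrees $m-1$, $m-2$, and $m-3$ enter the argument.)

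The step I expect to be the main obstacle is this last piece of bookkeeping: one must verify, arguing according to the residue of $m+n$ modulo $3$ and the size of $n-m$, that the asserted interval $[\lceil\frac{m+n-4}{3}\rceil, m-2]$ --- together with the degree $m-1$ when $n > m$ --- is contained in the union of $\{\nu_{m,n}\}$, the set of degrees carrying $3$-torsion via Theorem~\ref{nonvan3intro-thm}, and the set of degrees in which the Friedman--Hanlon Betti numbers are nonzero, with no gap created by the ceiling in the definition of $\nu_{m,n}$ or by the cut-offs $m-3$, $m-4$, and $2m-5$ in Theorem~\ref{nonvan3intro-thm}. The small cases $m \le 4$ can be checked directly and serve as a consistency test.
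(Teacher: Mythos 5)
Your proposal is correct and takes essentially the same route as the paper: vanishing outside the stated range comes from the dimension bound and the connectivity theorem of Bj\"orner {\etal}, and nonvanishing inside comes from the Shareshian--Wachs bottom nonvanishing result, Theorem~\ref{nonvan3intro-thm}, and the Friedman--Hanlon criterion applied to the residual degrees $d \in \{m-3,\, m-2,\, m-1\}$ (the paper phrases these residual cases as $b \le 2$ in the $(k,a,b)$ coordinates and settles $(m,n,d)=(5,5,2)$ via its Theorem~\ref{m55-thm}, where you instead invoke Shareshian--Wachs at $d=\nu_{m,n}$). Your explicit injectivity argument for the top boundary map of $\M[m,m]$ is a detail the paper leaves implicit, and the final bookkeeping you flag does go through as described.
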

  Again, see Section~\ref{higherchess-sec} for details.

  The problem of detecting $p$-torsion in the homology of
  $\tilde{M}_{m,n}$ for $p \neq 3$ remains
  open. In this context, we may mention that there is $5$-torsion in
  the homology of the matching complex $\M[14]$ \cite{m14}. 
  By computer calculations \cite{moretorsion}, further 
  $p$-torsion is known to exist for $p \in \{5, 7, 11, 13\}$.

  \subsection{Notation}
  \label{basic-sec}

  We identify the two parts of the graph $K_{m,n}$ with the two sets
  $[m] = \{1, 2, \ldots, m\}$ and $[\n{n}] = \{\n{1}, \n{2}, \ldots,
  \n{n}\}$. The latter set should be interpreted as a disjoint copy of
  $[n]$; hence each
  edge is of the form $i\n{j}$, where $i \in [m]$ and $j \in [n]$.
  Sometimes, it will be useful to view $\M[m,n]$ as a subcomplex of
  the matching complex $\M[m+n]$ on the complete graph $K_{m+n}$. In
  such situations, we identify the vertex $\n{j}$ in $K_{m,n}$ with
  the vertex $m+j$ in $K_{m+n}$ for each $j \in [n]$.

  For finite sets $S$ and $T$, we let $\M[S,T]$ denote the matching
  complex on the complete bipartite graph with blocks $S$ and $T$,
  viewed as disjoint sets in the manner described above.
  In particular, $\M[{[m],[n]}] = \M[m,n]$.
  For integers $a \le b$, we write $[a,b] = \{a, a+1, \ldots, b-1, b\}$.

  The {\em join} of two families of sets $\Delta$ and $\Sigma$,
  assumed to be defined on disjoint ground sets, is the family
  $\Delta \join \Sigma = \{ \delta \cup \sigma: \delta \in \Delta,
  \sigma \in \Sigma\}$.

  Whenever we discuss the homology of a simplicial complex or the
  relative homology of a pair of simplicial complexes, we mean reduced
  simplicial homology. 
  For a simplicial complex $\Sigma$ and a coefficient ring
  $\Field$, we let $e_0 \wedge \cdots \wedge e_d$ denote a generator
  of $\tilde{C}_d(\Sigma; \Field)$ corresponding to the set $\{e_0,
  \ldots, e_d\} \in \Sigma$. 
  Given a cycle $z$ in a chain group $\tilde{C}_d(\Sigma;
  \Field)$, whenever we talk about $z$ as an element in the 
  induced homology group $\tilde{H}_d(\Sigma;\Field)$, we really mean
  the homology class of $z$.

  We will often consider pairs of
  complexes $(\Gamma, \Delta)$ such that
  $\Gamma \setminus \Delta$ is a union of families of the
  form
  \[
  \Sigma = \{\sigma\} \join \M[S,T], 
  \]
  where $\sigma = \{e_1, \ldots, e_s\}$ is a set of pairwise disjoint
  edges of the form $i\n{j}$, and where $S$ and $T$
  are subsets of $[m]$ and $[n]$, respectively, such that $S \cap
  e_i = \n{T} \cap e_i = \emptyset$ for each $i$.
  We may write the chain complex of $\Sigma$ as
  \[
  \tilde{C}_d(\Sigma;\Field) = (e_1 \wedge \cdots \wedge e_s)\Field
  \tensor_\Field \tilde{C}_{d-|\sigma|}(\M[S,T];\Field),
  \]
  defining the boundary operator as 
  \[
  \partial(e_1 \wedge \cdots \wedge e_s \tensor_\Field c)
  = (-1)^s e_1 \wedge \cdots \wedge e_s \tensor_\Field \partial(c).
  \]
  For simplicity, we will often suppress $\Field$ from notation. For
  example, by some abuse of notation, we will write
  \[
  e_1 \wedge \cdots \wedge e_s \tensor
  \tilde{C}_{d-|\sigma|}(\M[S,T]) =
  (e_1 \wedge \cdots \wedge e_s)\Field \tensor_\Field
  \tilde{C}_{d-|\sigma|}(\M[S,T];\Field).
  \]

  We say that a cycle $z$ in 
  $\tilde{C}_{d-1}(\M[m,n];\Field)$ has {\em type}
  $\brackom{m_1,n_1}{d_1} \wedge \cdots \wedge
  \brackom{m_s,n_s}{d_s}$ 
  if there are partitions  
  $[m] = \bigcup_{i=1}^s S_i$ and
  $[n] = \bigcup_{i=1}^s T_i$
  such that $|S_i| = m_i$ and $|T_i| = n_i$
  and such that
  $z = z_1 \wedge \cdots \wedge z_s$,
  where $z_i$ is a cycle in $\tilde{C}_{d_i-1}(\M[S_i,T_i];\Field)$ for
  each $i$. 

  \subsection{Two classical results}
  \label{classic}

  Before proceeding, we list two classical results pertaining to 
  the topology of the chessboard complex $\M[m,n]$.

  \begin{theorem}[Bj\"{o}rner
    {\etal} \cite{BLVZ}]
    For $m,n \ge 1$, $\M[m,n]$ is $(\nu_{m,n}-1)$-connected. 
    \label{smallest-thm}
  \end{theorem}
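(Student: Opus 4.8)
The plan is to prove the statement by induction on $m+n$, using the symmetry $\M[m,n]\cong\M[n,m]$ to reduce to the case $m\le n$; the engine of the induction is the Nerve Lemma, applied to a cover of $\M[m,n]$ by contractible subcomplexes whose multiple intersections are smaller chessboard complexes. The base case $m=1$ is immediate: $\M[1,n]$ is a discrete set of $n\ge 1$ points and $\nu_{1,n}=0$, so the asserted $(-1)$-connectedness just says $\M[1,n]\ne\emptyset$.

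For the inductive step, fix the vertex $1\in[m]$ and, for each $j\in[n]$, set
\[
U_j=\{1\n{j}\}\join\M[{[m]\setminus\{1\},\,[\n{n}]\setminus\{\n{j}\}}],
\]
a cone, and hence contractible. First I would verify that $\M[m,n]=\bigcup_{j=1}^{n}U_j$: a matching containing an edge at vertex $1$ lies in the $U_j$ named by that edge, while a matching $\sigma$ with no edge at vertex $1$ has $|\sigma|\le m-1<n$ (this is the one place the hypothesis $m\le n$ is used), so $\sigma$ misses some $\n{j}$ and hence lies in $U_j$. Next, for $t\ge 2$ and distinct $j_1,\ldots,j_t$, no face of $U_{j_1}\cap\cdots\cap U_{j_t}$ contains an edge at vertex $1$, so that intersection equals $\M[{[m]\setminus\{1\},\,[\n{n}]\setminus\{\n{j_1},\ldots,\n{j_t}\}}]\cong\M[m-1,n-t]$; in particular the $t$-fold intersections with $t\le n-1$ have nonempty realization, whereas the $n$-fold intersection $\M[m-1,0]$ has empty realization. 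Therefore the nerve of $\{U_1,\ldots,U_n\}$ is the boundary of the $(n-1)$-simplex, which is $(n-3)$-connected.

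I would then invoke the Nerve Lemma for covers of a simplicial complex by subcomplexes: if every nonempty $t$-fold intersection is $(k-t+1)$-connected and the nerve is $k$-connected, then the complex is $k$-connected. Each $U_j$ is contractible, and each $t$-fold intersection with $2\le t\le n-1$ is $(\nu_{m-1,n-t}-1)$-connected by the inductive hypothesis (using the symmetric extension $\nu_{p,q}=\nu_{q,p}$ when $n-t<m-1$). Taking $k=\nu_{m,n}-1$, the nerve hypothesis $n-3\ge\nu_{m,n}-1$ follows from $\nu_{m,n}\le n-2$ (valid for $m\ge 2$), and the intersection hypotheses reduce to the single numerical inequality $\nu_{m-1,n-t}\ge\nu_{m,n}-t+1$ for $2\le t\le n-1$. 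I would establish this by bounding from below, term by term, the formula $\nu_{m-1,n-t}=\min\{\min(m-1,n-t)-1,\ \lceil\frac{m+n-t-5}{3}\rceil\}$: the estimate for the ceiling term collapses to $2t-4\ge 0$, and the estimate for the other term is a short case split on whether or not $n-t\ge m-1$, using only $t\ge 2$ and $m\le n$ in each case.

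The main obstacle is making all the indices in the Nerve Lemma fit together at once: the nerve must be $(\nu_{m,n}-1)$-connected and every nonempty $t$-fold intersection must be $(\nu_{m,n}-t)$-connected, and for the latter the inductive hypothesis supplies precisely $(\nu_{m-1,n-t}-1)$-connectedness, so everything hinges on the tight numerical inequality above. A secondary point is the degenerate pieces, chiefly the empty $n$-fold intersection $\M[m-1,0]$; it does no harm, since the Nerve Lemma asks only that it be $(\nu_{m,n}-n)$-connected and $\nu_{m,n}-n\le -2$ whenever $m\ge 2$, so the vacuous $(-2)$-connectedness suffices (equivalently, this intersection is simply not a face of the nerve). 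It is also worth pinning down the exact form of the Nerve Lemma being cited, as the standard statements differ by shifts in indexing.
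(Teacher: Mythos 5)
Your argument is correct, and all the delicate index bookkeeping checks out: the cover $\{U_j\}_{j\in[n]}$ does cover $\M[m,n]$ when $m\le n$ (a matching avoiding the vertex $1$ has at most $m-1<n$ edges, so it misses some $\n{j}$), the $t$-fold intersections for $2\le t\le n-1$ are indeed copies of $\M[m-1,n-t]$, the nerve is $\partial\Delta^{n-1}$, and the two numerical inequalities you isolate, $\nu_{m,n}\le n-2$ for $m\ge 2$ and $\nu_{m-1,n-t}\ge\nu_{m,n}-t+1$ for $t\ge 2$, both hold (the first because $\nu_{m,n}\le m-1\le n-2$ when $m<n$ and $\nu_{m,m}\le\lceil\frac{2m-4}{3}\rceil\le m-2$, the second by exactly the case split you describe), so the form of the Nerve Lemma you cite (nonempty $t$-fold intersections $(k-t+1)$-connected, nerve $k$-connected, hence the complex is $k$-connected) applies with $k=\nu_{m,n}-1$. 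One purely notational point: $U_j$ should be the cone $\{\emptyset,\{1\n{j}\}\}\join\M[{[m]\setminus\{1\},[n]\setminus\{j\}}]$, i.e.\ it must contain the faces not using $1\n{j}$ at all; your subsequent reasoning makes clear this is what you mean. As for comparison: the paper does not prove this theorem at all --- it is quoted as a classical result of Bj\"orner, Lov\'asz, Vre\'cica and \zivaljevic\ \cite{BLVZ}, with the remarks that Ziegler \cite{Ziegvert} obtained the stronger statement that the $\nu_{m,n}$-skeleton of $\M[m,n]$ is vertex decomposable (hence shellable, which gives the connectivity and more) and that Garst \cite{Garst} had settled the case $n\ge 2m-1$. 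Your nerve-lemma induction is essentially the original argument of \cite{BLVZ}; it is elementary and self-contained but yields only the connectivity bound, whereas the vertex-decomposability route cited in the paper gives finer combinatorial-topological information about the skeleton.
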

  Indeed, the $\nu_{m,n}$-skeleton of $\M[m,n]$ is 
  vertex decomposable \cite{Ziegvert}. 
  Garst \cite{Garst} settled the case $n \ge 2m-1$ in
  Theorem~\ref{smallest-thm}.
  As already mentioned in the
  introduction, there is nonvanishing homology in 
  degree $\nu_{m,n}$ for all $(m,n) \neq (1,1)$; see
  Section~\ref{bottomchess-sec} for details.

  The transformation (\ref{mnd2kab-eq}) maps the set
  $\{(m,n,\nu_{m,n}) : 1 \le m \le n\}$ to
  the set of triples $(k,a,b)$ satisfying either of the following:
  \begin{itemize}
  \item
    $k \in \{0,1,2\}$, $a \ge 0$, and $b \ge 1$ (corresponding to 
    $d = \lceil \frac{m+n-4}{3} \rceil$ and $m \le n \le 2m-2$).
  \item
    $2-a \le k \le 2$ and $b = 0$ (corresponding to $0 \le d = m-1$
    and $n \ge 2m-1$).
  \end{itemize}

  Friedman and Hanlon \cite{FH} established a formula for 
  the rational homology of $\M[m,n]$; see Wachs \cite{Wachs}
  for an overview. For our purposes, the most important consequence is
  the following result:

  \begin{theorem}[Friedman and Hanlon \cite{FH}]
    For $1 \le m \le n$, we have that
    $\tilde{H}_{d}(\M[m,n]; \Z)$ is infinite if and only if
    $(m-d-1)(n-d-1) \le d+1$, $m \ge d+1$, and $n \ge d+2$. 
    In particular, $\tilde{H}_{\nu_{m,n}}(\M[m,n]; \Z)$ is finite
    if and only if $n \le 2m-5$ and $(m,n) \notin
    \{(6,6),(7,7),(8,9)\}$.
    \label{chessfinite-thm}
  \end{theorem}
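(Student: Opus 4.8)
The plan is to derive both assertions from Friedman and Hanlon's explicit computation of the rational homology of $\M[m,n]$ in \cite{FH}, which describes $\tilde{H}_d(\M[m,n];\Q)$ as a direct sum of irreducible $\Symm{m}\times\Symm{n}$-modules indexed by a combinatorially explicit family $\Lambda(m,n,d)$ of partition data. The first step is an elementary reduction. Since $\M[m,n]$ is a finite complex, $\tilde{H}_d(\M[m,n];\Z)$ is a finitely generated abelian group, so it is infinite if and only if it has positive rank; by the universal coefficient theorem this happens if and only if $\tilde{H}_d(\M[m,n];\Q)\neq 0$, i.e.\ if and only if $\Lambda(m,n,d)\neq\emptyset$. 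Thus everything reduces to deciding, for each triple $(m,n,d)$, whether $\Lambda(m,n,d)$ is empty.

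I would settle this by a direct analysis of that family. Two necessary conditions are immediate: a face of dimension $d$ is a matching of size $d+1$, so $\tilde{C}_d(\M[m,n])=0$ unless $m\ge d+1$ and $n\ge d+1$; and Friedman and Hanlon's description moreover shows that no irreducible summand occurs unless $n\ge d+2$ (the borderline case being the vanishing of the top homology of the square complex $\M[m,m]$). Granting $m\ge d+1$ and $n\ge d+2$, the substantive point is that $\Lambda(m,n,d)\neq\emptyset$ if and only if $(m-d-1)(n-d-1)\le d+1$. With $x=d+1$ this reads $x^{2}-(m+n+1)x+mn\le 0$, whose set of solutions in $x$ — and therefore in $d$ — is an interval; so the rational homology of $\M[m,n]$ is supported in a contiguous band of degrees contained in $[\nu_{m,n},\,m-1]$ (and possibly empty). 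Extracting this equivalence from the formula in \cite{FH} is the main obstacle: one must verify that the Specht-module summands in their description exist exactly in this range, which amounts to checking that a combinatorial solvability condition for Young diagrams — with $m-d-1$, $n-d-1$, and $d+1$ entering as the relevant side lengths — holds precisely when the displayed inequality does. The rest is bookkeeping.

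For the ``in particular'' clause I would put $d=\nu_{m,n}$ and split into the two regimes of the definition of $\nu_{m,n}$ recalled before Theorem~\ref{smallest-thm}. If $n\ge 2m-1$ then $\nu_{m,n}=m-1$, so the group in question is the top homology: here $(m-d-1)(n-d-1)=0\le d+1$ and $m\ge d+1$ are trivial, and $n\ge d+2=m+1$ holds since $n\ge 2m-1$ (the pair $(1,1)$, where $\M[1,1]$ has vanishing homology throughout, is excluded as elsewhere), so $\tilde{H}_{\nu_{m,n}}(\M[m,n];\Z)$ is infinite — consistent with the claim, as $n\ge 2m-1>2m-5$. If $m\le n\le 2m-2$ then $\nu_{m,n}=\lceil\frac{m+n-4}{3}\rceil$, and a quick check shows $m\ge\nu_{m,n}+1$ and $n\ge\nu_{m,n}+2$ throughout this regime, so by the first part $\tilde{H}_{\nu_{m,n}}(\M[m,n];\Z)$ is finite exactly when $(m-\nu_{m,n}-1)(n-\nu_{m,n}-1)>\nu_{m,n}+1$. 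A short case analysis according to the residue of $m+n$ modulo $3$ evaluates the left-hand side and shows this strict inequality holds throughout $m\le n\le 2m-2$ except when $n\ge 2m-4$ or $(m,n)\in\{(6,6),(7,7),(8,9)\}$; since $n=2m-4,\,2m-3,\,2m-2$ all satisfy $n>2m-5$, it follows that $\tilde{H}_{\nu_{m,n}}(\M[m,n];\Z)$ is finite if and only if $n\le 2m-5$ and $(m,n)\notin\{(6,6),(7,7),(8,9)\}$. This last paragraph involves only ceiling functions and a single-variable quadratic, so I expect all the genuine difficulty to lie in the partition-combinatorics step above.
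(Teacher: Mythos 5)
The paper itself offers no proof of this statement: it is quoted verbatim from Friedman and Hanlon \cite{FH}, and the only thing added is the translation of the conditions into the $(k,a,b)$ coordinates of (\ref{mnd2kab-eq}). So the question is whether your sketch would stand on its own as a derivation. The outer layers do: the universal-coefficient reduction of ``$\tilde{H}_d(\M[m,n];\Z)$ infinite'' to ``$\tilde{H}_d(\M[m,n];\Q)\neq 0$'' is correct for a finite complex, and your specialization $d=\nu_{m,n}$ — splitting into $n\ge 2m-1$ (where $m-d-1=0$ forces infinite homology) and $m\le n\le 2m-2$ (where one checks when $(m-\nu_{m,n}-1)(n-\nu_{m,n}-1)>\nu_{m,n}+1$ fails, namely for $n\ge 2m-4$ and for $(6,6),(7,7),(8,9)$) — is the routine computation one would expect; your aside about $(1,1)$ is a genuine, if minor, edge case that the theorem as literally stated glosses over.

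However, as a proof of the first sentence there is a real gap. The entire content of that sentence is the equivalence between nonvanishing of Friedman--Hanlon's decomposition and the condition $(m-d-1)(n-d-1)\le d+1$, $m\ge d+1$, $n\ge d+2$, and this is exactly the step you defer. You never state what your family $\Lambda(m,n,d)$ is — in \cite{FH} the summands are Specht modules indexed by partitions $\lambda\vdash m$ subject to explicit constraints tying $\lambda$, its conjugate, $n$, and $d$ — and the claim that this family is nonempty precisely when the displayed quadratic inequality holds is asserted as ``a combinatorial solvability condition for Young diagrams'' rather than verified. Calling the remainder ``bookkeeping'' inverts the difficulty: the final paragraph is the bookkeeping, and the unverified equivalence is the theorem. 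If your intention is simply to cite \cite{FH} for that equivalence, your treatment then coincides with the paper's (a citation plus the easy specialization to $d=\nu_{m,n}$), but in that case the partition-combinatorics step should be presented as quoted, not as an obligation you have discharged.
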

  With $k$, $a$, and $b$ defined as in (\ref{mnd2kab-eq}), 
  the conditions
  $1 \le m \le n$, $(m-d-1)(n-d-1) \le d+1 \le m$, and $n \ge d+2$
  are equivalent to 
  \[
  b(a+b) \le k + a + 2b - 1 \Longleftrightarrow
  (b-1)(a+b-1) \le k, 
  \]
  $a \ge 0$, $b \ge 0$, $a+b \ge 1$, and $k+a+3b \ge 2$.
  Moreover, the conditions $d = \nu_{m,n}$, $m \le n \le 2m-5$, 
  and $(m,n) \notin \{(6,6),(7,7),(8,9)\}$ are equivalent to 
  $k \in \{0,1,2\}$, $a \ge 0$, $b \ge 2$, and $(k,a,b) \notin
  \{(1,0,2), (2, 0, 2), (2, 1, 2)\}$.
  
  \section{Four long exact sequences}

  We present four long exact sequences relating different
  families of chessboard complexes. In this paper, we will only use
  the third and the fourth sequences; we list the other two sequences 
  for reference. Throughout this section,
  we consider an arbitrary coefficient ring $\Field$, which we suppress
  from notation for convenience.

  \subsection{Long exact sequence relating $\M[m,n]$,
    $\M[m,n-1]$, and $\M[m-1,n-1]$}
  \label{exseq00-01-11-sec}

  \begin{theorem}
    Define
    \[
    P^{m-1,n-1}_{d} = 
    \bigoplus_{s=1}^m s\n{1} \tensor\tilde{H}_{d}(\M[{[m]\setminus
        \{s\},[2,n]}]).
    \]
    For each $m \ge 1$ and $n \ge 1$, we have a long exact sequence
    \[
    \begin{CD}
      & & & & \cdots @>>> 
      P^{m-1,n-1}_{d} \\
      @>>> 
      \tilde{H}_{d}(\M[m,n-1]) @>>>
      \tilde{H}_{d}(\M[m,n]) @>>>
      P^{m-1,n-1}_{d-1} \\
      @>>>
      \tilde{H}_{d-1}(\M[m,n-1]) @>>> \cdots .
    \end{CD}
    \]
  \end{theorem}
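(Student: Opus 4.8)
The plan is to obtain the sequence as the long exact sequence in reduced homology of the pair $(\M[m,n], \M[{[m],[2,n]}])$, identifying $\M[m,n-1]$ with the subcomplex $\M[{[m],[2,n]}] \subseteq \M[m,n]$ consisting of those matchings in $K_{m,n}$ that do not cover the vertex $\n{1}$.

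First I would describe the complement. A matching $\tau \in \M[m,n]$ fails to lie in $\M[{[m],[2,n]}]$ exactly when $\tau$ covers $\n{1}$; in that case $\tau$ contains a unique edge $s\n{1}$ with $s \in [m]$, and $\tau \setminus \{s\n{1}\}$ is a matching in the complete bipartite graph on $[m]\setminus\{s\}$ and $[2,n]$. Hence
\[
\M[m,n] \setminus \M[{[m],[2,n]}] = \bigcup_{s=1}^m \{s\n{1}\} \join \M[{[m]\setminus\{s\},[2,n]}],
\]
a disjoint union of families of the type discussed in Section~\ref{basic-sec}. It follows that the relative chain complex splits as
\[
\tilde{C}_d(\M[m,n],\M[{[m],[2,n]}]) = \bigoplus_{s=1}^m s\n{1} \tensor \tilde{C}_{d-1}(\M[{[m]\setminus\{s\},[2,n]}]),
\]
with relative boundary equal, on the summand indexed by $s$, to $(-1)\cdot(s\n{1}\tensor\partial)$: among the boundary faces of $s\n{1}\wedge c$, only $c$ itself avoids $\n{1}$, and it is zero modulo $\M[{[m],[2,n]}]$. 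Taking homology of this direct sum of chain complexes gives
\[
H_d(\M[m,n],\M[{[m],[2,n]}]) \cong \bigoplus_{s=1}^m s\n{1} \tensor \tilde{H}_{d-1}(\M[{[m]\setminus\{s\},[2,n]}]) = P^{m-1,n-1}_{d-1}.
\]

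Finally I would substitute this identification into the long exact homology sequence of the pair. Since $n \ge 1$, the subcomplex $\M[{[m],[2,n]}]$ is nonempty (it contains at least the empty face), so the reduced long exact sequence
\[
\cdots \to \tilde{H}_d(\M[m,n-1]) \to \tilde{H}_d(\M[m,n]) \to H_d(\M[m,n],\M[m,n-1]) \to \tilde{H}_{d-1}(\M[m,n-1]) \to \cdots
\]
is valid; replacing $H_d(\M[m,n],\M[m,n-1])$ by $P^{m-1,n-1}_{d-1}$ and the term $H_{d+1}(\M[m,n],\M[m,n-1])$ entering from the left by $P^{m-1,n-1}_d$ reproduces the asserted sequence verbatim. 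No step is genuinely hard here; the only care needed is the sign bookkeeping in the relative boundary operator, which is exactly the convention fixed in Section~\ref{basic-sec}, together with a glance at the degenerate cases ($n=1$, or $m=1$ where some $\M[{[m]\setminus\{s\},[2,n]}]$ collapses to the single empty face), in which the displayed sequence is checked directly.
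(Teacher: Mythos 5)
Your proof is correct and is exactly the paper's argument: the paper proves this theorem in one line as the long exact sequence of the pair $(\M[m,n],\M[m,n-1])$, and your identification of $\M[m,n-1]$ with the matchings avoiding $\n{1}$ together with the splitting of the relative chain complex into the summands $s\n{1}\tensor\tilde{C}_{d-1}(\M[{[m]\setminus\{s\},[2,n]}])$ simply supplies the routine details the paper leaves implicit. Nothing further is needed.
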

  \begin{proof}
    This is the long exact sequence for the 
    pair $(\M[m,n], \M[m,n-1])$.
  \end{proof}
  We refer to this sequence as the {\em 00-01-11 sequence}, thereby
  indicating that the sequence relates $\M[m-0,n-0]$, $\M[m-0,n-1]$,
  and $\M[m-1,n-1]$. 
  Note that the sequence is asymmetric in $m$ and $n$; 
  swapping the indices, we obtain an exact sequence
  relating 
  $\M[m,n]$, $\M[m-1,n]$, and $\M[m-1,n-1]$.

  \subsection{Long exact sequence relating $\M[m,n]$,
    $\M[m-1,n-2]$, $\M[m-2,n-1]$, and $\M[m-2,n-2]$}
  \label{exseq00-12-21-22-sec}

  \begin{theorem}[{\shareshian} \& Wachs \cite{ShWa}]
    Define
    \begin{eqnarray*}
    P^{m-1,n-2}_{d} &=& 
    \bigoplus_{t=2}^n 1\n{t} \tensor\tilde{H}_{d}(\M[{[2,m],[2,n]\setminus
        \{t\}}]); \\
    Q^{m-2,n-1}_{d} &=& 
    \bigoplus_{s=2}^m s\n{1}
    \tensor\tilde{H}_{d}(\M[{[2,m]\setminus\{s\},[2,n]}]); \\
    R^{m-2,n-2}_{d} &=&
    \bigoplus_{s=2}^m \bigoplus_{t=2}^n
    1\n{t} \wedge s\n{1} \tensor
    \tilde{H}_{d}(\M[{[2,m] \setminus \{s\},[2,n] \setminus \{t\}}]).
    \end{eqnarray*}
    For each $m \ge 2$ and $n \ge 2$, we have a long exact sequence
    \[
    \begin{CD}
      & & & & \cdots @>>> 
      R^{m-2,n-2}_{d-1} \\
      @>>> 
      P^{m-1,n-2}_{d-1}
      \oplus 
      Q^{m-2,n-1}_{d-1}
      @>>>
      \tilde{H}_{d}(\M[m,n]) @>>>
      R^{m-2,n-2}_{d-2} \\
      @>>>
      P^{m-1,n-2}_{d-2}
      \oplus 
      Q^{m-2,n-1}_{d-2} @>>> \cdots .
    \end{CD}
    \]
  \end{theorem}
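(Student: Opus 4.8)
The plan is to realize the stated sequence as the long exact homology sequence of the pair $(\M[m,n], G)$ for a carefully chosen subcomplex $G$, having first computed $\tilde{H}_*(G)$ and the relative homology $\tilde{H}_*(\M[m,n],G)$ directly from the combinatorics of matchings.

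First I would fix the two vertices $1 \in [m]$ and $\n{1} \in [\n{n}]$ and classify a matching $\sigma \in \M[m,n]$ by how these behave. Call $\sigma$ \emph{inert} if neither $1$ nor $\n{1}$ is matched to a vertex other than the other one; the inert matchings form precisely the cone over $\M[{[2,m],[2,n]}]$ with apex the vertex $1\n{1}$, which is contractible, so this subcomplex $F$ is acyclic. Next I would take $G \supseteq F$ to be the subcomplex of all matchings in which \emph{at most one} of $1, \n{1}$ is matched to a vertex other than the other one; $G$ is genuinely a subcomplex, since deleting an edge can only free vertices. The faces of $G \setminus F$ then split into the \emph{$P$-faces} (in which $1$ is matched to some $\n{t}$ with $t \ge 2$ and $\n{1}$ is unmatched) and the \emph{$Q$-faces} (in which $\n{1}$ is matched to some $s \ge 2$ and $1$ is unmatched), while the faces of $\M[m,n] \setminus G$ are exactly the \emph{$R$-faces}, in which $1$ is matched to some $\n{t}$ with $t \ge 2$ \emph{and} $\n{1}$ is matched to some $s \ge 2$.

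To compute $\tilde{H}_*(G)$ I would use that $F$ is acyclic, so $\tilde{H}_*(G) \cong H_*(\tilde{C}_*(G)/\tilde{C}_*(F))$, and this quotient chain complex has the $P$- and $Q$-faces as a basis. The key observation is that deleting the distinguished edge from a $P$-face or a $Q$-face (the edge $1\n{t}$, respectively $s\n{1}$) produces an inert matching, hence a chain that vanishes in the quotient, whereas deleting any other edge preserves the type and never turns a $P$-face into a $Q$-face or vice versa. Consequently $\tilde{C}_*(G)/\tilde{C}_*(F)$ is the direct sum, with no interaction, of a ``$P$-part'' and a ``$Q$-part'', and each part decomposes further over the choice of distinguished edge: the $P$-part is $\bigoplus_{t=2}^{n} 1\n{t} \tensor \tilde{C}_{*-1}(\M[{[2,m],[2,n]\setminus\{t\}}])$ with boundary $-\,1\n{t}\wedge \partial$, and the $Q$-part is $\bigoplus_{s=2}^{m} s\n{1} \tensor \tilde{C}_{*-1}(\M[{[2,m]\setminus\{s\},[2,n]}])$ with boundary $-\,s\n{1}\wedge \partial$. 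Taking homology gives $\tilde{H}_d(G) \cong P^{m-1,n-2}_{d-1} \oplus Q^{m-2,n-1}_{d-1}$.

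Similarly, the quotient chain complex $\tilde{C}_*(\M[m,n])/\tilde{C}_*(G)$ has the $R$-faces as a basis, and deleting \emph{either} distinguished edge $1\n{t}$ or $s\n{1}$ from an $R$-face yields a $Q$-face or a $P$-face, hence a chain that is zero in this quotient; so the complex equals $\bigoplus_{s=2}^{m}\bigoplus_{t=2}^{n} 1\n{t}\wedge s\n{1} \tensor \tilde{C}_{*-2}(\M[{[2,m]\setminus\{s\},[2,n]\setminus\{t\}}])$ with boundary $(1\n{t}\wedge s\n{1})\wedge \partial$, so that $\tilde{H}_d(\M[m,n],G) \cong R^{m-2,n-2}_{d-2}$. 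Feeding $\tilde{H}_d(G) \cong P^{m-1,n-2}_{d-1}\oplus Q^{m-2,n-1}_{d-1}$ and $\tilde{H}_d(\M[m,n],G) \cong R^{m-2,n-2}_{d-2}$ into the long exact sequence of the pair $(\M[m,n],G)$ produces the stated sequence verbatim; the index conventions in the definitions of $P$, $Q$, $R$ are exactly such that no reindexing is required. The main obstacle is purely one of bookkeeping: verifying that $G$ is a subcomplex and, more importantly, tracking precisely where each edge-deletion lands, since that is what forces the three chain complexes above to split as the indicated direct sums. Once that is checked, the remainder of the argument is formal.
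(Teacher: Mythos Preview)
Your argument is correct. The paper does not actually supply a proof of this theorem; it is stated and attributed to Shareshian and Wachs \cite{ShWa}, who established it in their analysis of the bottom nonvanishing homology of $\M[m,n]$. So there is no ``paper's own proof'' to compare against directly.

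That said, your approach is entirely in the spirit of the paper's other exact-sequence constructions. In particular, it parallels the proof of Theorem~\ref{exseqG-21-23-thm} (the $\Gamma$-21-23 sequence): both arguments proceed by setting up a short filtration $F \subset G \subset \M[m,n]$, identifying the relative chain complexes at each stage as direct sums of chain complexes of smaller chessboard complexes (shifted in degree by the number of forced edges), and then invoking the long exact sequence of a pair. Your use of the contractible ``inert'' subcomplex $F = \{1\n{1}\} \join \M[{[2,m],[2,n]}]$ to kill the base of the filtration is clean; the paper's $\Gamma$-21-23 proof does the analogous thing implicitly by observing that $\M[\{3-a\},\{1\}]$ is a point. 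The bookkeeping you flag---checking that boundary maps on $P$-, $Q$-, and $R$-faces either stay within their own summand or land in a lower stratum---is exactly the content of the argument, and you have it right: deleting a distinguished edge drops you down one level in the filtration, while deleting any other edge preserves the stratum and the distinguished edge. Nothing is missing.
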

  We refer to this sequence as the {\em 00-12-21-22 sequence}.
  The sequence played an important part in {\shareshian} and Wachs'
  analysis \cite{ShWa} of the bottom nonvanishing homology of
  $\M[m,n]$.
  Note that the sequence is symmetric in $m$ and $n$.

  \subsection{Long exact sequence relating $\M[m,n]$,
    $\Mupp[m,n]$, and $\M[m-1,n-1]$}
  \label{exseq00-G-11-sec}

  The sequence in this section is very similar, but not identical, to
  the 00-01-11 sequence in Section~\ref{exseq00-01-11-sec}.
  Define 
  \begin{equation}
    \label{Gamma-eq}
    \Mupp[m,n] = \{\sigma \in \M[m,n]: s\n{1} \notin \sigma \mbox{
    for } s \in [3,m]\}.
  \end{equation}

  \begin{theorem}
    Define
    \[
    \hat{P}^{m-1,n-1}_{d} = 
    \bigoplus_{s=3}^m s\n{1} \tensor\tilde{H}_{d}(\M[{[m]\setminus
        \{s\},[2,n]}]);
    \]
    note that this definition differs from that in 
    Section~{\rm\ref{exseq00-01-11-sec}}.
    For each $m \ge 1$ and $n \ge 1$, we have a long exact sequence
    \[
    \begin{CD}
      & & & & \cdots @>>> 
      \hat{P}^{m-1,n-1}_{d} \\
      @>>> 
      \tilde{H}_{d}(\Mupp[m,n]) @>>>
      \tilde{H}_{d}(\M[m,n]) @>>>
      \hat{P}^{m-1,n-1}_{d-1} \\
      @>>>
      \tilde{H}_{d-1}(\Mupp[m,n]) @>>> \cdots .
    \end{CD}
    \]
  \end{theorem}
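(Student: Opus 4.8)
The plan is to realize this long exact sequence as the long exact sequence in homology of a suitable pair of complexes, exactly as in the 00-01-11 sequence, but adapted to $\Mupp[m,n]$ instead of $\M[m,n-1]$. First I would check that $\Mupp[m,n]$, as defined in~(\ref{Gamma-eq}), is indeed a subcomplex of $\M[m,n]$: it is the set of matchings avoiding the edges $s\n{1}$ for $s\in[3,m]$, and since this condition is closed under taking subsets, $\Mupp[m,n]$ is a simplicial complex. Then the plan is to write down the long exact sequence of the pair $(\M[m,n],\Mupp[m,n])$,
\[
\cdots \longrightarrow \tilde{H}_d(\Mupp[m,n]) \longrightarrow \tilde{H}_d(\M[m,n]) \longrightarrow \tilde{H}_d(\M[m,n],\Mupp[m,n]) \longrightarrow \tilde{H}_{d-1}(\Mupp[m,n]) \longrightarrow \cdots,
\]
and the remaining work is to identify the relative homology $\tilde{H}_d(\M[m,n],\Mupp[m,n])$ with $\hat{P}^{m-1,n-1}_{d-1}$.

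The key computation is thus the relative homology. The complement $\M[m,n]\setminus\Mupp[m,n]$ consists of precisely those matchings $\sigma$ that contain $s\n{1}$ for some $s\in[3,m]$; since $\n{1}$ is matched to at most one vertex, this is a disjoint union (as a set of faces, though not as complexes) over $s\in[3,m]$ of the families $\{s\n{1}\}\join\M[{[m]\setminus\{s\},[2,n]}]$. Following the framework set up in Section~\ref{basic-sec}, the relative chain complex $\tilde{C}_\bullet(\M[m,n],\Mupp[m,n])$ decomposes as the direct sum over $s\in[3,m]$ of the chain complexes $s\n{1}\tensor\tilde{C}_{\bullet-1}(\M[{[m]\setminus\{s\},[2,n]}])$, with the sign twist in the boundary operator as described there. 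Taking homology commutes with finite direct sums, so
\[
\tilde{H}_d(\M[m,n],\Mupp[m,n]) \cong \bigoplus_{s=3}^m s\n{1}\tensor\tilde{H}_{d-1}(\M[{[m]\setminus\{s\},[2,n]}]) = \hat{P}^{m-1,n-1}_{d-1}.
\]
Substituting this identification into the long exact sequence of the pair yields exactly the stated sequence.

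The main obstacle—such as it is—is the verification that the relative chain complex genuinely splits as that direct sum, i.e.\ that there are no boundary contributions between different values of $s$ and no boundary contributions landing back in $\Mupp[m,n]$ that would spoil the tensor decomposition. This is really a bookkeeping point: a face of $\M[m,n]\setminus\Mupp[m,n]$ determines $s$ uniquely (it is the unique vertex of $[3,m]$ matched to $\n{1}$), and deleting any edge other than $s\n{1}$ from such a face either leaves $s\n{1}$ present (so we stay in the same summand) or, if we delete $s\n{1}$ itself, lands in $\Mupp[m,n]$ and hence in the zero chain of the quotient. The only edges available in a face of $\{s\n{1}\}\join\M[{[m]\setminus\{s\},[2,n]}]$ besides $s\n{1}$ are edges of $\M[{[m]\setminus\{s\},[2,n]}]$, so the induced boundary on the quotient is precisely $\pm(s\n{1}\tensor\partial)$, confirming the decomposition. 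Since the statement explicitly permits small $m$ and $n$, I would also note the degenerate cases ($m\le 2$, where $\hat P^{m-1,n-1}_\bullet=0$ and $\Mupp[m,n]=\M[m,n]$, so the sequence is trivially exact; and small $n$, which cause no difficulty) before concluding.
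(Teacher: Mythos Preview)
Your proposal is correct and follows exactly the paper's approach: the paper's proof consists of the single sentence ``This is the long exact sequence for the pair $(\M[m,n],\Mupp[m,n])$,'' and you have simply spelled out the identification of the relative homology using the framework from Section~\ref{basic-sec}. The extra detail you provide (uniqueness of $s$, the boundary bookkeeping, the degenerate cases) is all sound and fills in what the paper leaves implicit.
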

  \begin{proof}
    This is the long exact sequence for the pair 
    $(\M[m,n],\Mupp[m,n])$.
  \end{proof}
  We refer to this sequence as the {\em 00-$\mathit{\Gamma}$-11
  sequence}.
  Note that the sequence is asymmetric in $m$ and $n$.

  \subsection{Long exact sequence relating $\Mupp[m,n]$,
    $\M[m-2,n-1]$, and $\M[m-2,n-3]$}
  \label{exseqG-21-23-sec}

  Recall the definition of $\Mupp[m,n]$ from 
  (\ref{Gamma-eq}).
  \begin{theorem}
    Write
    \begin{eqnarray*}
      Q^{m-2,n-1}_{d} &=&
      (1\n{1} - 2\n{1}) \tensor\tilde{H}_{d}(\M[{[3,m],[2,n]}]); \\
      R^{m-2,n-3}_{d} &=&
      \bigoplus_{s \neq t \in [2,n]}
      1\n{s}\wedge 2\n{t} \tensor
      \tilde{H}_{d}(\M[{[3,m],[2,n] \setminus \{s,t\}}]).
    \end{eqnarray*}
    For each $m \ge 2$ and $n \ge 3$, we have a long exact sequence
    \[
    \begin{CD}
      & & & & \cdots @>>> 
      R^{m-2,n-3}_{d-1} \\
      @>\varphi^*>> 
      Q^{m-2,n-1}_{d-1} @>\iota^*>>
      \tilde{H}_{d}(\Mupp[m,n]) @>>>
      R^{m-2,n-3}_{d-2} \\
      @>>>
      Q^{m-2,n-1}_{d-2} @>>> \cdots ,
    \end{CD}
    \]
    where $\varphi^*$ is induced by the map 
    $\varphi$ defined by 
    \[
    \varphi(1\n{s} \wedge 2\n{t}\tensor x) 
    = (1\n{1} - 2\n{1}) \tensor x.
    \]
    and $\iota^*$ is induced by the natural map
    $\iota((1\n{1} - 2\n{1}) \tensor x) = (1\n{1} - 2\n{1}) \wedge
    x$. 
  \label{exseqG-21-23-thm}
  \end{theorem}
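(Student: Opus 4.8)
The plan is to realize the asserted long exact sequence as the long exact homology sequence of a suitable filtration (equivalently, a pair or triple of subcomplexes) of $\Mupp[m,n]$, and then to identify the subquotients with the displayed groups $Q^{m-2,n-1}_\bullet$ and $R^{m-2,n-3}_\bullet$ by an explicit shelling-type decomposition of the relevant relative chain complexes. Concretely, inside $\Mupp[m,n]$ the only edges at the distinguished vertex $\n1$ are $1\n1$ and $2\n1$; I would stratify according to how a matching meets the pair of vertices $1,2\in[m]$ and the vertex $\n1$. Let $\Delta$ be the subcomplex of $\Mupp[m,n]$ consisting of those matchings that use neither $1\n1$ nor $2\n1$ and do not match both $1$ and $2$ (to vertices in $[\n2,\n n]$) simultaneously; the complement $\Mupp[m,n]\setminus\Delta$ should break up, as in the bookkeeping already set up in Section~\ref{basic-sec}, into pieces of the form $\{\sigma\}\join\M[S,T]$, and the claim is that the homology of the pair $(\Mupp[m,n],\Delta)$ is exactly $\bigoplus_{s\neq t\in[2,n]} 1\n s\wedge 2\n t\tensor\tilde H_{\ast-2}(\M[{[3,m],[2,n]\setminus\{s,t\}}])$, i.e.\ a shift of $R^{m-2,n-3}$, while $\Delta$ itself deformation retracts (or is chain-homotopy equivalent) onto a copy of $\M[{[3,m],[2,n]}]$ sitting inside via the cone point $(1\n1-2\n1)$, giving the term $Q^{m-2,n-1}$. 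The long exact sequence of the pair $(\Mupp[m,n],\Delta)$ then has the stated shape, with connecting map $\varphi^*$ and inclusion-induced map $\iota^*$.

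The heart of the argument is therefore a careful chain-level analysis. First I would show $\tilde H_\ast(\Delta)\cong\tilde H_\ast(\M[{[3,m],[2,n]}])$: a matching in $\Delta$ either avoids both $1$ and $2$ entirely, or matches exactly one of them to some $\n t$ with $t\ge 2$. The subcomplex where both $1,2$ are unused is exactly $\M[{[3,m],[2,n]}]$ (an ordinary chessboard complex on the smaller vertex sets), and I expect the strata where $1\n t$ or $2\n t$ is present to be conical — contractible cones with apex determined by the unused vertex among $\{1,2\}$ — so that a discrete Morse or nerve argument collapses $\Delta$ onto $\M[{[3,m],[2,n]}]$. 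The map $\iota$ sending a cycle $x$ in $\M[{[3,m],[2,n]}]$ to $(1\n1-2\n1)\wedge x$ is not into $\Delta$, but rather realizes the class $x$ after the identification $\tilde H_\ast(\Delta)\cong\tilde H_\ast(\M[{[3,m],[2,n]}])$ composed with inclusion into $\Mupp[m,n]$; checking that this composite agrees with $\iota^*$ as defined is the bookkeeping step. Second, for the relative term, a matching $\sigma\in\Mupp[m,n]\setminus\Delta$ contains $1\n1$, contains $2\n1$, or matches both $1$ and $2$ to $[\n2,\n n]$. I would filter these three families and show that in the relative chain complex the first two families are acyclic (again because fixing one of $1\n1,2\n1$ leaves a cone on the other variable, or a matching complex that is contractible in the relevant range), so that the relative homology is carried entirely by the third family, which after choosing the two matched values $s=$ partner of $1$ and $t=$ partner of $2$ is precisely $1\n s\wedge 2\n t\tensor\tilde C_{\ast-2}(\M[{[3,m],[2,n]\setminus\{s,t\}}])$, with the right sign in the boundary as prescribed in Section~\ref{basic-sec}.

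Having both endpoints identified, the connecting homomorphism $\partial\colon\tilde H_d(\Mupp[m,n],\Delta)\to\tilde H_{d-1}(\Delta)$ must then be computed on a generating cycle: take a cycle $z$ in $\M[{[3,m],[2,n]\setminus\{s,t\}}]$, lift $1\n s\wedge 2\n t\tensor z$ to a relative chain in $\Mupp[m,n]$, apply the simplicial boundary, and push the result into $\tilde H_{d-1}(\Delta)\cong\tilde H_{d-1}(\M[{[3,m],[2,n]}])$. Because $1\n s$ and $2\n t$ are the only non-$\Delta$ edges in such a chain, $\partial(1\n s\wedge 2\n t\tensor z)= (2\n t\tensor z' - 1\n s\tensor z'')+\cdots$ with the remaining terms living in $\Delta$, and after the retraction these two boundary edges both get identified (up to sign) with the apex class $(1\n1-2\n1)$; this is exactly the content of the formula $\varphi(1\n s\wedge 2\n t\tensor x)=(1\n1-2\n1)\tensor x$. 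I expect this last sign-and-identification computation — verifying that the three contributions collapse onto the single term $(1\n1-2\n1)\tensor x$ with the stated sign — to be the main obstacle: it is not deep, but it requires keeping the orientation conventions of Section~\ref{basic-sec} perfectly consistent across the two identifications. Finally I would note the hypotheses $m\ge2$, $n\ge3$ are exactly what is needed for all the vertex sets $[3,m]$, $[2,n]$, and $[2,n]\setminus\{s,t\}$ to make sense and for $\Delta$ to be a proper subcomplex, and that the whole construction is functorial in the coefficient ring $\Field$, as claimed at the start of the section.
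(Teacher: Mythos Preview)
Your overall strategy---take the long exact sequence of a pair $(\Mupp[m,n],\Delta)$ and identify the two ends with $Q$ and $R$---is exactly what the paper does, but your specific choice of $\Delta$ is wrong, and this breaks both identifications.

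You defined $\Delta$ to \emph{exclude} the edges $1\n{1}$ and $2\n{1}$. That decision causes three failures. First, $\Delta$ is not homotopy equivalent to $\M[{[3,m],[2,n]}]$: for $m=4$, $n=3$ your $\Delta$ is connected (so $\tilde{H}_0=0$), while $\M[{\{3,4\},\{2,3\}}]\cong \M[2,2]$ is two disjoint edges (so $\tilde{H}_0=\Field$). Your proposed collapse---coning a stratum with $1\n t$ present off the ``unused vertex'' $2$---would need the edge $2\n{1}$ as a cone point, but you have removed it from $\Delta$. Second, in the relative complex the ``type~A'' family $\{1\n{1}\}\ast\M[{[2,m],[2,n]}]$ decouples (its boundary drops $1\n{1}$ and lands in $\Delta$), so it contributes $1\n{1}\otimes\tilde{H}_{*-1}(\M[m-1,n-1])$, which is not zero in general; the ``cone on the other variable'' you invoke does not exist because there is no single edge for $2$ to cone on once $\n{1}$ is used by $1$. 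Third, the stated map $\iota^*$ raises degree by one, so the subcomplex must have homology $\tilde{H}_{d}(\cdot)\cong Q_{d-1}$, i.e.\ a \emph{suspension} of $\M[{[3,m],[2,n]}]$; a deformation retraction cannot produce this shift, and the cycle $(1\n{1}-2\n{1})\wedge x$ you want to hit does not even lie in your $\Delta$.

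The paper's remedy is simply to \emph{keep} the edges $1\n{1}$ and $2\n{1}$. It takes $\Delta^1$ to be all of $\Mupp[m,n]$ minus the faces containing $\{1\n{s},2\n{t}\}$ with $s,t\ge 2$, and inside it the subcomplex $\Delta^0=\M[2,1]\ast\M[{[3,m],[2,n]}]$. Then $\Delta^0$ is literally the suspension of $\M[{[3,m],[2,n]}]$, giving $\tilde{H}_d(\Delta^0)=Q_{d-1}$ with the correct degree shift, and the pieces of $\Delta^1\setminus\Delta^0$ are of the form $\{a\n{u}\}\ast\M[\{3-a\},\{1\}]\ast\M[{[3,m],[2,n]\setminus\{u\}}]$ with $a\in\{1,2\}$; since $(3-a)\n{1}$ \emph{is} allowed in $\Mupp[m,n]$, the middle factor $\M[1,1]$ is a single point, and these pieces are genuinely conical. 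That is the ``cone on the other variable'' you were reaching for---it only works because $1\n{1}$ and $2\n{1}$ have been retained.
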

  \begin{proof}
    Define a filtration 
    \[
    \Delta^0_{m,n} \subset
    \Delta^1_{m,n} \subset
    \Delta^2_{m,n} = \Mupp[m,n]
    \]
    as follows:
    \begin{itemize}
    \item
      $\Delta^2_{m,n} = \Mupp[m,n]$.
    \item
      $\Delta^1_{m,n}$ is the subcomplex of $\Delta^2_{m,n}$ obtained by 
      removing all faces containing $\{1\n{s},2\n{t}\}$ for some
      $s,t \in [2,n]$.
    \item
      $\Delta^0_{m,n}$ is the subcomplex of $\Delta^1_{m,n}$ obtained by 
      removing the elements $1\n{2}, \ldots, 1\n{n}$ 
      and $2\n{2}, \ldots, 2\n{n}$.
    \end{itemize}
    Writing $\Delta^{-1}_{m,n} = \emptyset$, let us 
    examine $\Delta^{i}_{m,n} \setminus
    \Delta^{i-1}_{m,n}$ for $i = 0,1,2$.

    $\bullet$ $i=0$. Note that
    \[
    \Delta^0_{m,n} = \M[2,1] \join \M[{[3,m],[2,n]}]
    \cong
    \M[2,1] \join \M[m-2,n-1].
    \]
    As a consequence, 
    \[
    \tilde{H}_d(\Delta^0_{m,n})
    \cong (1\n{1} -2\n{1}) \tensor 
    \tilde{H}_{d-1}(\M[{[3,m],[2,n]}])
    = Q_{d-1}^{m-2,n-1}.
    \]

    $\bullet$ $i=1$.    
    Observe that
    \[
    \Delta^1_{m,n} \setminus \Delta^0_{m,n}
    = \bigcup_{a=1}^2\bigcup_{u=2}^n \{\{a\n{u}\}\} 
    \join \M[\{3-a\},\{1\}] \join \M[{[3,m],[2,n] \setminus
    \{u\}}].
    \]
    It follows that
    \[
    \tilde{H}_d(\Delta^1_{m,n},\Delta^0_{m,n}) = 
    \bigoplus_{a,u}
    a\n{u} \tensor \tilde{H}_{d-1}(\M[\{3-a\},\{1\}] \join
    \M[{[3,m],[2,n] \setminus
        \{u\}}]) = 0;
    \]
    $\M[\{3-a\},\{1\}] \cong \M[1,1]$ is a point.
    In particular, $\tilde{H}_d(\Delta^1_{m,n}) \cong
    \tilde{H}_d(\Delta^0_{m,n})$.

    $\bullet$ $i=2$.    
    We have that
    \[
    \Delta^2_{m,n} \setminus \Delta^1_{m,n}
    = 
    \bigcup_{s,t \in [2,n]} \{\{1\n{s},2\n{t}\}\} \join \M[{[3,m],[2,n] \setminus
    \{s,t\}}];
    \]
    we may hence conclude that 
    \[
    \tilde{H}_d(\Delta^2_{m,n},\Delta^1_{m,n}) = 
    \bigoplus_{s,t} 1\n{s} \wedge 2\n{t} \tensor 
    \tilde{H}_{d-2}(\M[{[3,m],[2,n] \setminus
        \{s,t\}}]) = R_{d-1}^{m-2,n-3}.
    \]
    By the long exact sequence for the pair 
    $(\Delta^2_{m,n},\Delta^1_{m,n})$, it remains to prove that the 
    induced map
    $\varphi^*$ has properties as stated in the theorem.
    Now, in the long exact sequence for $(\Delta^2_{m,n},\Delta^1_{m,n})$, 
    the induced boundary map from
    $\tilde{H}_{d+1}(\Delta^2_{m,n},\Delta^1_{m,n})$ to 
    $\tilde{H}_{d}(\Delta^1_{m,n})$ maps the element 
    $1\n{s} \wedge 2\n{t}\tensor z$
    to $(2\n{t} - 1\n{s}) \tensor z$. 
    Since
    \[
    (2\n{t} - 1\n{s}) \tensor z-\partial((1\n{1} \wedge 2\n{t}
    + 1\n{s}\wedge 2\n{1}) 
    \tensor z)  = (1\n{1} - 2\n{1}) \tensor z,
    \]
    we are done.
  \end{proof}
  We refer to the sequence in Theorem~\ref{exseqG-21-23-thm} as the
  {\em $\mathit{\Gamma}$-21-23 sequence}. Note that the sequence is
  asymmetric in $m$ and $n$.

  \section{Bottom nonvanishing homology}
  \label{bottomchess-sec}

  Using the long exact sequences in Sections~\ref{exseq00-G-11-sec}
  and \ref{exseqG-21-23-sec}, we give a computer-free proof
  that $\tilde{H}_{2}(\M[5,5];\Z)$ is a group of size three.  
  While the proof is complicated, our hope is that it may provide at
  least some insight into the structure of $\M[5,5]$ and related 
  chessboard complexes.
  \begin{theorem}
    We have that $\tilde{H}_{2}(\M[5,5];\Z) \cong \Z_3$.
    \label{m55-thm}
  \end{theorem}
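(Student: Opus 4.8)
The plan is to compute $\tilde{H}_*(\M[5,5];\Z)$ by feeding the two new long exact sequences through the chain of complexes $\M[5,5] \rightsquigarrow \Mupp[5,5] \rightsquigarrow \{\M[3,4], \M[3,2]\}$, and to import the needed homology of small chessboard complexes from the literature (Friedman--Hanlon and the connectivity bound of Björner et al.). Since $\nu_{5,5} = \lceil 6/3\rceil = 2$ and $\dim\M[5,5] = 4$, the interesting range is degrees $2,3,4$; by Theorem~\ref{smallest-thm} the complex is $1$-connected, so $\tilde{H}_0 = \tilde{H}_1 = 0$, and by Theorem~\ref{chessfinite-thm} the group $\tilde{H}_2(\M[5,5];\Z)$ is finite. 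I would first assemble the input data: $\M[3,4]$ and $\M[3,2]$ are small enough that their integral homology is known explicitly (e.g.\ $\M[3,2]$ is a wedge of circles and $\M[3,4]$ has homology concentrated in low degree with a known free/torsion description), and likewise for the even smaller pieces $\M[3,3]$, $\M[2,3]$, $\M[1,4]$, etc., that appear in the summands $\hat{P}$, $Q$, $R$.

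Next I would run the $\Gamma$-21-23 sequence of Theorem~\ref{exseqG-21-23-thm} with $(m,n) = (5,5)$: here $Q^{3,4}_d = (1\n{1}-2\n{1})\tensor\tilde{H}_d(\M[{[3,5],[2,5]}]) \cong \tilde H_d(\M[3,4])$ and $R^{3,2}_d = \bigoplus_{s\neq t\in[2,5]} 1\n s\wedge 2\n t \tensor \tilde H_d(\M[{[3,5],[2,5]\setminus\{s,t\}}])$, a direct sum of $12$ copies of $\tilde H_d(\M[3,2])$. The key computation is to pin down the map $\varphi^*\colon R^{3,2}_{d-1}\to Q^{3,2}_{d-1}$ (wait --- $Q$ is indexed $m-2,n-1 = 3,4$); concretely $\varphi$ sends each $1\n s\wedge 2\n t\tensor x$ to $(1\n1-2\n1)\tensor x$, so on homology it is a ``sum-type'' collapse map, and its image and kernel can be identified using the $\Symm{}$-symmetry in the $\n s,\n t$ indices together with the known structure of $\tilde H_*(\M[3,2])$ and $\tilde H_*(\M[3,4])$. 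From the resulting exact sequence I extract $\tilde H_d(\Mupp[5,5];\Z)$ in each relevant degree. Then I would run the $00$-$\Gamma$-$11$ sequence with $(m,n)=(5,5)$: $\hat P^{4,4}_d = \bigoplus_{s=3}^5 s\n1 \tensor \tilde H_d(\M[{[5]\setminus\{s\},[2,5]}]) \cong \bigoplus_3 \tilde H_d(\M[4,4])$, which relates $\tilde H_*(\Mupp[5,5])$, $\tilde H_*(\M[5,5])$, and $\tilde H_*(\M[4,4])$; plugging in the known $\tilde H_*(\M[4,4])$ (which is $1$-connected with $\tilde H_2(\M[4,4];\Z)$ free --- indeed $\M[4,4]$ is the first case where Friedman--Hanlon give infinite bottom homology is false; rather $\nu_{4,4}=2$ and the group is free of known rank) isolates $\tilde H_2(\M[5,5];\Z)$.

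The main obstacle will be the two connecting-map computations: identifying $\varphi^*$ precisely enough (not just its source and target, but its rank and the torsion of its cokernel) in the $\Gamma$-21-23 sequence, and then tracking how the $3$-torsion survives through the $00$-$\Gamma$-$11$ sequence without being killed or doubled by the free part of $\tilde H_*(\M[4,4])$ and $\tilde H_*(\M[3,4])$. I expect the argument to hinge on a parity/divisibility observation --- the ``$1\n1-2\n1$'' differences and the alternating signs in the boundary formula $\partial((1\n1\wedge 2\n t + 1\n s\wedge 2\n1)\tensor z)$ should force a factor of $3$ (respectively, combine three symmetric summands into one) at exactly one spot, which is the source of the $\Z_3$. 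I would organize the bookkeeping by exploiting the action of the relevant symmetric groups on the index sets to diagonalize the maps into isotypic blocks, reducing each connecting map to a small integer matrix whose Smith normal form gives the $3$-torsion; checking that all other potential torsion cancels, and that the free ranks match Friedman--Hanlon's formula (giving $\tilde H_2$ finite and hence pure torsion), then completes the identification $\tilde H_2(\M[5,5];\Z)\cong\Z_3$.
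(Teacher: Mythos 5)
Your plan follows the same architecture as the paper's proof---compute $\tilde{H}_2(\Mupp[5,5])$ from the $\Gamma$-21-23 sequence, then pass to $\tilde{H}_2(\M[5,5])$ via the 00-$\Gamma$-11 sequence, with $Q^{3,4}$, $R^{3,2}$ and $\hat{P}^{4,4}$ identified correctly---but the two places where you defer to ``symmetry plus Smith normal form'' and to ``known'' homology of small complexes are exactly where the proof lives, and at the second place the inputs you propose cannot suffice even in principle. In the $\Gamma$-21-23 step, knowing $\tilde{H}_1(\M[3,4])\cong\Z^2$ and $\tilde{H}_1(\M[3,2])\cong\Z$ as abstract groups does not determine $\varphi^*$: one must express the twelve hexagon generators $z_{uv}$ of the summands of $R^{3,2}_1$ in a basis $e_4,e_5$ of $\tilde{H}_1(\M[{[3,5],[2,5]}])$, which the paper does by exhibiting explicit $2$-chains whose boundaries show $z_{uv}\in\{-e_4-e_5,\ 2e_4-e_5,\ -e_4+2e_5\}$, so that the image of $\varphi^*$ is exactly the index-$3$ sublattice $\{ae_4+be_5 : a\equiv b \pmod 3\}$ and $\tilde{H}_2(\Mupp[5,5])\cong\Z_3$. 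This is an unexecuted (though in principle doable) chain-level computation in your plan, not mere bookkeeping.

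The genuine gap is in the final step. The tail of the 00-$\Gamma$-11 sequence reads $\bigoplus_{x=3}^5 x\n{1}\tensor\tilde{H}_2(\M[{[5]\setminus\{x\},[2,5]}]) \to \tilde{H}_2(\Mupp[5,5]) \to \tilde{H}_2(\M[5,5]) \to 0$ (the next term vanishes because $\M[4,4]$ is $1$-connected), so $\tilde{H}_2(\M[5,5])$ is the cokernel of the first map $\psi^*$. Knowing $\tilde{H}_2(\M[4,4];\Z)$ abstractly---free or not, with whatever rank---tells you nothing about $\psi^*$: a free group maps onto $\Z_3$ perfectly well, so the sequence alone only yields that $\tilde{H}_2(\M[5,5])$ is a quotient of $\Z_3$, possibly zero, and matching free ranks against Friedman--Hanlon cannot separate $0$ from $\Z_3$ since both are finite. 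The paper closes this by importing a generation result (Shareshian--Wachs, Lemma 5.9): $\tilde{H}_2(\M[4,4])$ is generated by cycles of two explicit wedge types, and each such generator is killed in $\tilde{H}_2(\Mupp[5,5])$ using the fact that $T=\Symm{\{1,2\}}\times\Symm{\{3,4,5\}}\times\Symm{\{\n{2},\n{3},\n{4},\n{5}\}}$ acts on $\tilde{H}_2(\Mupp[5,5])\cong\Z_3$ by $\pi(\rho)=\sgn(\pi)\rho$, plus one further explicit boundary for the second type; hence $\psi^*=0$. Some explicit handle on generators of $\tilde{H}_2(\M[4,4])$ (or an independent proof that $\tilde{H}_2(\M[5,5])\neq 0$, which is precisely what required a computer in the earlier literature) is indispensable and missing from your proposal. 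Note also that your side claim that $\tilde{H}_2(\M[4,4];\Z)$ is free is not available from anything quoted in the paper---it is an instance of the open case $n=2m-4$ of Conjecture~\ref{chesstorsion1-conj}---and in any case it is irrelevant to controlling $\psi^*$.
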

  \begin{proof}
    First, we examine $\M[3,4]$; for alignment with later parts of the
    proof, we consider $\M[{[3,5],[2,5]}]$, thereby shifting the first
    index two steps and the second index one step.
    The long exact 00-$\Gamma$-11 sequence
    from Section~\ref{exseq00-G-11-sec} becomes
    \[
    \begin{CD}
       0 @>>>
      \tilde{H}_2(\Mupp[{[3,5],[2,5]}]) @>>> 
      \tilde{H}_2(\M[{[3,5],[2,5]}]) @>\omega^*>>
      5\n{2}\tensor\tilde{H}_1(\M[{[3,4],[3,5]}]) \\
      @>>>
      \tilde{H}_1(\Mupp[{[3,5],[2,5]}]) @>\iota^*>> 
      \tilde{H}_1(\M[{[3,5],[2,5]}]) @>>>
      0.
    \end{CD}
    \]
    As {\shareshian} and Wachs observed \cite[\S6]{ShWa},
    the complex $\M[m,m+1]$ is an orientable pseudomanifold of
    dimension $m-1$. In particular, 
    $\M[{[3,5],[2,5]}]$ and
    $\M[{[3,4],[3,5]}]$ are
    orientable pseudomanifolds of dimensions $2$ and $1$,
    respectively. Moreover, the top homology group 
    of $\M[{[3,5],[2,5]}]$ is generated by
    \[
    z = \sum_{\pi \in \Symm{[2,5]}}
    \sgn(\pi) \cdot 3\n{\pi(3)}\wedge 4\n{\pi(4)} \wedge
    5\n{\pi(5)},
    \]
    and the top homology group of $\M[{[3,4],[3,5]}]$ is generated by
    \[
    z' = \sum_{\pi \in \Symm{[3,5]}}
    \sgn(\pi) \cdot 3\n{\pi(3)}\wedge 4\n{\pi(4)}.
    \]
    Since $\omega^*(z) = -z'$, the map $\omega^*$ is an isomorphism.
    As a consequence, the map $\iota^*$ induced by the
    natural inclusion map is also an isomorphism. 

    The long exact $\Gamma$-21-23 sequence for $\Mupp[{[3,5],[2,5]}]$
    from Section~\ref{exseqG-21-23-sec} becomes
    \[
    \begin{CD}
       0 @>>>
      (3\n{2} - 4\n{2})\tensor\tilde{H}_0(\M[{\{5\},[3,5]}])
      @>\iota^*>>
      \tilde{H}_1(\Mupp[{[3,5],[2,5]}]) @>>> 
      0,
    \end{CD}
    \]
    which yields that each of $\tilde{H}_1(\Mupp[{[3,5],[2,5]}])$ and 
    $\tilde{H}_1(\M[{[3,5],[2,5]}])$ is generated
    by $e_i = (3\n{2} - 4\n{2}) \wedge (5\n{3}-5\n{i})$
    for $i \in \{4,5\}$.

    Now, consider $\M[5,5]$. The tail end 
    of the $\Gamma$-21-23 sequence is
    \[
    \begin{CD}
      & & \displaystyle{\bigoplus_{s,t}}\ 1\n{s} \wedge 2\n{t}\tensor
      \tilde{H}_1(\M[{[3,5],[2,5]\setminus \{s,t\}}]) \\
      @>\varphi^*>>
      (1\n{1}-2\n{1})\tensor \tilde{H}_1(\M[{[3,5],[2,5]}]) 
      @>\iota^*>> \tilde{H}_2(\Mupp[5,5]) \rightarrow
      0,
    \end{CD}
    \]
    where the first sum ranges over all pairs of distinct elements
    $s,t \in [2,5]$.
    Writing $\{s,t,u,v\} = [2,5]$,
    we note that $\tilde{H}_1(\M[{[3,5],[2,5]\setminus
    \{s,t\}}]) = \tilde{H}_1(\M[{[3,5],\{u,v\}}])$ is generated by the cycle
    \[
    z_{uv} = 3\n{u} \wedge 4\n{v} + 4\n{v} \wedge 5\n{u} + 5\n{u}
    \wedge 3\n{v} + 3\n{v} \wedge 4\n{u} + 4\n{u} \wedge 5\n{v} +
    5\n{v} \wedge 3\n{u}. 
    \]
    By Theorem~\ref{exseqG-21-23-thm}, $\varphi^*$ maps 
    $1\n{s} \wedge 2\n{t}\tensor z_{uv}$ to
    $(1\n{1} - 2\n{1}) \tensor z_{uv}$. 
    Since $z_{uv} = z_{vu}$, we conclude that
    the image under $\varphi^*$ is 
    generated by the six cycles
    $z_{23},z_{24},z_{25}, z_{34},z_{35},z_{45}$.

    In $\tilde{H}_1(\M[{[3,5],[2,5]}])$, we have that  
    $z_{st} = z_{uv}$, because
    $z_{st} - z_{uv}$ equals the boundary of 
    \begin{eqnarray*}
      \gamma &=& 3\n{u} \wedge 5\n{s} \wedge 4\n{v} 
      - 
      5\n{s} \wedge 4\n{v} \wedge 3\n{t}
      + 
      4\n{v} \wedge 3\n{t} \wedge 5\n{u}
      - 
      3\n{t} \wedge 5\n{u} \wedge 4\n{s} \\
      &+& 
      5\n{u} \wedge 4\n{s} \wedge 3\n{v}
      - 
      4\n{s} \wedge 3\n{v} \wedge 5\n{t}
      + 
      3\n{v} \wedge 5\n{t} \wedge 4\n{u} 
      -
      5\n{t} \wedge 4\n{u} \wedge 3\n{s} \\
      &+& 
      4\n{u} \wedge 3\n{s} \wedge 5\n{v}
      - 
      3\n{s} \wedge 5\n{v} \wedge 4\n{t}
      + 
      5\n{v} \wedge 4\n{t} \wedge 3\n{u}
      - 
      4\n{t} \wedge 3\n{u} \wedge 5\n{s}.
    \end{eqnarray*}
    Namely, $\gamma$ is of the form $a_1 \wedge a_2
    \wedge a_3 - a_2
    \wedge a_3 \wedge a_4 + \cdots - a_{12} \wedge a_1 \wedge a_2$, 
    which yields the boundary $-a_1\wedge a_3 + a_2 \wedge a_4 -
    \cdots + a_{12} \wedge a_2$. 
    As a consequence, the image under $\varphi^*$ is
    generated by the three cycles
    $z_{34},z_{35},z_{45}$.

    Assume that $s = 2$ and $\{t,u,v\} = \{3,4,5\}$ and write
    \begin{eqnarray*}
    w_{uv} &=& 
    5\n{u} \wedge 4\n{s} \wedge 3\n{v} 
    - 4\n{s} \wedge 3\n{v} \wedge 5\n{t}
    + 3\n{v} \wedge 5\n{t} \wedge 4\n{u} \\
    &-& 5\n{t} \wedge 4\n{u} \wedge 3\n{s}
    + 4\n{u} \wedge 3\n{s} \wedge 5\n{v}.
    \end{eqnarray*}
    We obtain that 
    \begin{eqnarray*}
      \partial(w_{uv}+w_{vu}) &=& 
      (5\n{u} \wedge 4\n{s} - 5\n{u} \wedge 3\n{v} + 4\n{s} \wedge
      5\n{t} 
      - 3\n{v} \wedge 4\n{u} + 5\n{t} \wedge 3\n{s} \\
      &-& 4\n{u} \wedge 5\n{v} +  3\n{s} \wedge 5\n{v}) +
      (5\n{v} \wedge 4\n{s} - 5\n{v} \wedge 3\n{u} 
      + 4\n{s} \wedge 5\n{t}\\
      &-& 3\n{u}  \wedge 4\n{v} + 5\n{t} \wedge 3\n{s}
      - 4\n{v} \wedge 5\n{u} + 3\n{s} \wedge 5\n{u})\\
      &=& 
      (4\n{s} - 3\n{s}) \wedge (2\cdot 5\n{t} - 5\n{u} - 5\n{v})
      - z_{uv}.
    \end{eqnarray*}
    Since $s=2$, it follows that $z_{uv}$ is equal to either $-e_4 -
    e_5$, $2e_4-e_5$, or $-e_4+2e_5$ in
    $\tilde{H}_1(\M[{[3,5][2,5]}])$ depending on the values of $t$, 
    $u$, and $v$.

    We conclude that the set $\{\varphi^*(1\n{s}\wedge 2\n{t}
    \tensor z_{uv}) : \{s,t,u,v\} = [2,5]\}$ 
    generates the subgroup $\{ (1\n{1}-2\n{1}) \tensor (ae_4+be_5) :
    a-b \equiv 0 \pmod{3}\}$ of 
    $(1\n{1}-2\n{1}) \tensor \tilde{H}_1(\M[{[3,5],[2,5]}])$.
    As a consequence, 
    $\tilde{H}_2(\Mupp[5,5]) \cong \Z_3$, and 
    \[
    \rho = (1\n{1}-2\n{1}) \wedge (3\n{2}-4\n{2}) \wedge
    (5\n{3}-5\n{4})
    \]
    is a generator for this group.
    Swapping $\n{3}$ and $\n{4}$, we obtain $-\rho$; 
    we obtain the same result if we swap $3$ and $4$ or if we swap 
    $1$ and $2$. Hence, by symmetry, the group 
    \[
    T = \Symm{\{1,2\}} \times \Symm{\{3,4,5\}} \times
    \Symm{\{\n{2},\n{3},\n{4},\n{5}\}}
    \]
    acts on $\tilde{H}_2(\Mupp[5,5]) \cong \Z_3$ by
    $\pi(\rho) = \sgn(\pi) \cdot \rho$.

    It remains to prove that 
    $\tilde{H}_2(\Mupp[5,5]) \cong 
    \tilde{H}_2(\M[5,5])$. For this, consider
    the  tail end of the 00-$\Gamma$-11 sequence
    from Section~\ref{exseq00-G-11-sec}:
    \[
    \begin{CD}
      \displaystyle{\bigoplus_{x=3}^5}\ x\n{1}\tensor
      \tilde{H}_2(\M[{[5]\setminus \{x\},[2,5]}]) 
      @>\psi^*>>
      \tilde{H}_2(\Mupp[5,5]) @>>> 
      \tilde{H}_2(\M[5,5]) \rightarrow
      0
    \end{CD}
    \]
    By a result due to {\shareshian} and Wachs \cite[Lemma 5.9]{ShWa},
    we have that 
    $\tilde{H}_2(\M[{[5]\setminus \{x\},[2,5]}]) \cong 
    \tilde{H}_2(\M[4,4])$ is generated by cycles 
    of type $\brackom{3,2}{2} \wedge \brackom{1,2}{1}$
    and cycles of type 
    $\brackom{2,3}{2} \wedge \brackom{2,1}{1}$; recall 
    notation from Section~\ref{basic-sec}.
    By properties of $\psi^*$, we need only prove that
    any such cycle vanishes in $\tilde{H}_2(\Mupp[5,5])$
    whenever $x \in [3,5]$.

    $\bullet$ A cycle of the first type is of the form
    $z = \lambda \cdot \gamma \wedge (d\n{u}-d\n{v})$, where
    $\lambda$ is a constant scalar,
    \[
    \gamma = a\n{s} \wedge b\n{t} + b\n{t} \wedge c\n{s} + c\n{s}
    \wedge a\n{t} + a\n{t} \wedge b\n{s} + b\n{s} \wedge c\n{t} +
    c\n{t} \wedge a\n{s},
    \]
    $\{a,b,c,d\} = [5] \setminus \{x\}$, and $\{s,t,u,v\} = [2,5]$. 
    By the above discussion, swapping $\n{s}$ and $\n{t}$ in $z$
    should yield $-z$, but obviously the same swap in $\gamma$ again
    yields $\gamma$, which implies that $z = - z$; hence $z=0$.

    $\bullet$ A cycle of the second type is of the form
    $z = \lambda \cdot \gamma \wedge (c\n{v}-d\n{v})$, where
    $\lambda$ is a constant scalar, say $\lambda = 1$, and 
    \[
    \gamma = a\n{s} \wedge b\n{t} + b\n{t} \wedge a\n{u} + a\n{u}
    \wedge b\n{s} + b\n{s} \wedge a\n{t} + a\n{t} \wedge b\n{u} +
    b\n{u} \wedge a\n{s};
    \]
    again $\{a,b,c,d\} = [5] \setminus \{x\}$ and $\{s,t,u,v\} = [2,5]$. 
    If $\{a,b\} \subset [3,5]$, then we may swap $a$ and $b$ and again
    conclude that $z = -z$; the same argument applies if $\{a,b\} =
    \{1,2\}$.
    For the remaining case, we may assume that
    $c \in [1,2]$ and $d \in [3,5]$. Swapping $d$ and $x$ yields
    $-z = \gamma \wedge (c\n{v}-x\n{v})$; recall that $x \in
    [3,5]$. As a consequence,
    \[
    2z = z-(-z) = \gamma \wedge (x\n{v}-d\n{v}) = 
    \partial(c\n{1} \wedge \gamma \wedge (x\n{v}-d\n{v}));
    \]
    hence $z$ is again zero. Namely, since $c \in [1,2]$, we have that
    $c\n{1}$ is an element in $\Mupp[5,5]$. As a consequence, 
    $\psi^*$ is the zero map as desired.
  \end{proof}

  By Theorems~\ref{smallest-thm} and
  \ref{chessfinite-thm},
  the connectivity degree of $\M[m,n]$ is exactly $\nu_{m,n}-1$
  whenever $n \ge 2m-4$ or $(m,n) \in
  \{(6,6),(7,7),(8,9)\}$.
  As mentioned in the introduction, {\shareshian} and Wachs
  \cite{ShWa} extended this result to all $(m,n) \neq (1,1)$, thereby
  settling a conjecture due to Bj\"orner {\etal} \cite{BLVZ}:
  \begin{theorem}[{\shareshian} \& Wachs \cite{ShWa}]
    If $m \le n \le 2m-5$ and $(m,n) \neq (8,9)$, then there is
    nonvanishing $3$-torsion in
    $\tilde{H}_{\nu_{m,n}}(\M[m,n]; \Z)$.
    If in addition $m+n \equiv 1 \pmod{3}$, then
    $\tilde{H}_{\nu_{m,n}}(\M[m,n]; \Z) \cong \Z_3$.
    \label{chesstorsion-thm}
  \end{theorem}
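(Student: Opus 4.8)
The plan is to prove the statement by induction on $m+n$, using the two new long exact sequences (the $00$-$\Gamma$-$11$ sequence of Section~\ref{exseq00-G-11-sec} and the $\Gamma$-$21$-$23$ sequence of Section~\ref{exseqG-21-23-sec}) together with the base case $\tilde{H}_2(\M[5,5];\Z) \cong \Z_3$ already established in Theorem~\ref{m55-thm}. The degree under consideration is $d = \nu_{m,n} = \lceil \frac{m+n-4}{3}\rceil$, which (since $n \le 2m-5$) equals $\frac{m+n-4}{3}$ exactly when $m+n \equiv 1 \pmod 3$; in the transformed coordinates of (\ref{mnd2kab-eq}) this is the case $k \in \{0,1,2\}$, $a \ge 0$, $b \ge 2$, and for the stronger conclusion $\tilde{H}_{\nu_{m,n}} \cong \Z_3$ we are in the case $k = 0$. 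By Theorem~\ref{smallest-thm}, $\M[m,n]$ is $(\nu_{m,n}-1)$-connected, so $\tilde{H}_i = 0$ below degree $\nu_{m,n}$; by Theorem~\ref{chessfinite-thm}, the hypotheses $n \le 2m-5$ and $(m,n) \notin \{(6,6),(7,7),(8,9)\}$ guarantee that $\tilde{H}_{\nu_{m,n}}(\M[m,n];\Z)$ is finite. The three excluded pairs $(6,6)$, $(7,7)$, $(8,9)$ satisfy $m+n \equiv 0$, $2$, $2 \pmod 3$ respectively, so none of them has $m+n \equiv 1$; thus for the $\Z_3$-conclusion they need not be treated separately, and for the torsion conclusion the hypothesis $(m,n) \neq (8,9)$ excludes the only genuinely problematic one.

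The inductive step runs as follows. Starting from $\M[m,n]$ with $m \le n \le 2m-5$, feed the $00$-$\Gamma$-$11$ sequence in degree $d = \nu_{m,n}$: the term $\hat{P}^{m-1,n-1}_{d-1}$ is built from $\tilde{H}_{d-1}(\M[{[m]\setminus\{s\},[2,n]}]) = \tilde{H}_{d-1}(\M[m-1,n-1])$, which vanishes because $d-1 < \nu_{m-1,n-1}$ when $(m,n)$ is not too close to the boundary $n = 2m-5$; and $\hat{P}^{m-1,n-1}_{d}$ involves $\tilde{H}_d(\M[m-1,n-1])$, the bottom group of a smaller chessboard complex, which by induction contains $3$-torsion (and is $\cong \Z_3$ in the $k=0$ case). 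Tracking how this $3$-torsion propagates requires controlling the connecting maps, so in parallel one exploits the $\Gamma$-$21$-$23$ sequence to pin down $\tilde{H}_d(\Mupp[m,n])$ in terms of $\tilde{H}_d(\M[m-2,n-1])$ and $\tilde{H}_{d-1}(\M[m-2,n-3])$; the latter again vanishes by connectivity, and the former is the bottom group of a smaller complex. Combining the two sequences and cancelling $\Mupp[m,n]$—exactly the mechanism the introduction describes for Theorem~\ref{boundsintro-thm}—one obtains a surjection onto $\tilde{H}_d(\M[m,n])$ from a group whose $3$-torsion is controlled by the smaller complexes, and an injection in the other direction, forcing nonvanishing $3$-torsion to survive. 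Care must be taken at the boundary cases $n = 2m-5$ and $n = 2m-6$ (and the diagonal $m = n$ with the shifted degree bound $\frac{2m-4}{3} \le d \le m-4$), where some of the "vanishing by connectivity" arguments degenerate; these small cases, and the pairs near $(6,6),(7,7),(8,9)$, will need to be checked directly, presumably reducing to $\M[5,5]$, $\M[6,6]$, $\M[6,7]$ and a bounded list of small complexes.

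The main obstacle is the same one that makes Theorem~\ref{m55-thm} delicate: the connecting homomorphisms in the long exact sequences are not obviously injective or surjective on the torsion part, and a $3$-torsion class in $\tilde{H}_d(\M[m-1,n-1])$ could, a priori, map to zero or be killed on the way up to $\tilde{H}_d(\M[m,n])$. Resolving this requires an explicit description of a generating $3$-torsion cycle of the correct "type" $\brackom{m_1,n_1}{d_1} \wedge \cdots$ in the sense of Section~\ref{basic-sec}—essentially a join of a $\Z_3$-cycle coming from a small chessboard complex (ultimately $\M[5,5]$ or $\M[k,k+1]$) with lower-dimensional orientation cycles—and then verifying by a symmetry/sign argument, in the spirit of the $T$-action computation in the proof of Theorem~\ref{m55-thm}, that this cycle is neither a boundary nor hit by the connecting map. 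For the upgrade to $\tilde{H}_{\nu_{m,n}}(\M[m,n];\Z) \cong \Z_3$ when $m+n \equiv 1 \pmod 3$, one additionally needs the matching upper bound $\dim \tilde{H}_{\nu_{m,n}}(\M[m,n];\Z_3) \le 1$, which is precisely the $k = 0$ instance of Theorem~\ref{boundsintro-thm}; invoking that (or re-running the cancellation argument quantitatively in this special case) pins the group down exactly once nonvanishing $3$-torsion and finiteness are known.
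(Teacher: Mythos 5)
Note first that the paper does not prove Theorem~\ref{chesstorsion-thm} at all: it is quoted from Shareshian and Wachs \cite{ShWa}, and the paper's own contribution in this direction is the computer-free base case $\tilde{H}_2(\M[5,5];\Z)\cong\Z_3$ (Theorem~\ref{m55-thm}) together with the separate argument of Theorem~\ref{chessallover-thm} that covers $(8,9)$. Judged on its own, your outline has a decisive gap at the nonvanishing step. Used as you use them, the 00-$\Gamma$-11 and $\Gamma$-21-23 sequences only give upper bounds on $\dim_{\Z_3}$ (this is exactly Lemma~\ref{dmtchess-lem}); knowing that $\tilde{H}_d(\M[m-1,n-1])$ or $\tilde{H}_d(\Mupp[m,n])$ contains $3$-torsion does not force $\tilde{H}_d(\M[m,n])$ to contain any, since the relevant classes can die in the sequences: a torsion class in $\tilde{H}_d(\Mupp[m,n])$ maps to zero in $\tilde{H}_d(\M[m,n])$ whenever it lies in the image of $\hat{P}^{m-1,n-1}_{d}$, and the map to $\hat{P}^{m-1,n-1}_{d-1}$ may be zero. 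Your sentence about ``a surjection \ldots and an injection in the other direction, forcing nonvanishing $3$-torsion to survive'' asserts precisely what must be proved, and you then defer it as ``the main obstacle''. That obstacle is the actual content of \cite{ShWa}, whose induction carries explicit torsion classes and controls the connecting maps of their 00-12-21-22 sequence; in the present paper the analogous nonvanishing statements are obtained not by diagram chasing but from the explicit cycles $z_{k+1,k+2}\wedge\gamma^{(k+1,k+2)}_{m_0,n_0-1}$ via Theorem~\ref{torsionallover-thm} and Bouc's Theorem~\ref{bouctor-thm}. (Also, the vanishing you invoke for $\hat{P}^{m-1,n-1}_{d-1}$ holds only when $m+n\equiv 1\pmod{3}$; otherwise $d-1=\nu_{m-1,n-1}$ and that term is itself a bottom nonvanishing group, not killed by connectivity.)

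There is in addition a circularity in the upper-bound half: you invoke the $k=0$ case of Theorem~\ref{boundsintro-thm} to get $\dim\tilde{H}_{\nu_{m,n}}(\M[m,n];\Z_3)\le 1$, but in this paper that case (Theorems~\ref{basecase-thm} and \ref{bettichess-thm}) is deduced from Theorem~\ref{chesstorsion-thm} itself, so it cannot be cited here. Your parenthetical alternative is the correct repair: run Corollary~\ref{dmtchess-cor} at the bottom degree, where the $\hat{\beta}_{-1}$-terms vanish by Theorem~\ref{smallest-thm}, and descend using $\hat{\beta}_0^{-1,b}=\hat{\beta}_0^{1,b-1}$ until the chain terminates at $\M[5,5]$; combined with finiteness from Theorem~\ref{chessfinite-thm}, this shows the group is $0$ or $\Z_3$ when $m+n\equiv 1\pmod{3}$. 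But without the missing nonvanishing argument it cannot decide between those two possibilities, so the theorem is not established by your proposal.
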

  By Theorem~\ref{chessallover-thm} in
  Section~\ref{higherchess-sec}, there is nonvanshing
  $3$-torsion also in $\tilde{H}_{\nu_{8,9}}(\M[8,9]; \Z)$; in that
  theorem, choose $(k,a,b) = (2,1,2)$.

  \bigskip

  \noindent
  {\bf[Table 1]}

  \bigskip

  In fact, {\shareshian} and Wachs provided much more specific
  information about the exponent of $\tilde{H}_{\nu_{m,n}}(\M[m,n];
  \Z)$; see Table~\ref{chessexp-fig}.

  \begin{conjecture}[{\shareshian} \& Wachs \cite{ShWa}]
    The group $\tilde{H}_{\nu_{m,n}}(\M[m,n]; \Z)$ is
    torsion-free if and only if $n \ge 2m-4$.
    \label{chesstorsion1-conj}
  \end{conjecture}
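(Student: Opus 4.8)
The two implications have a very different character, and the plan is to handle them separately. The ``only if'' direction is essentially already in hand: if $m \le n \le 2m-5$, then Theorem~\ref{chesstorsion-thm} produces nonvanishing $3$-torsion in $\tilde{H}_{\nu_{m,n}}(\M[m,n];\Z)$ unless $(m,n)=(8,9)$, and the case $(m,n)=(8,9)$ is covered by Theorem~\ref{chessallover-thm} with $(k,a,b)=(2,1,2)$. So whenever $n<2m-4$ the group is not torsion-free, and all the weight of the conjecture lies in the converse.

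Assume from now on that $n \ge 2m-4$; the goal is that $\tilde{H}_{\nu_{m,n}}(\M[m,n];\Z)$ is free. First I would dispose of the range $n \ge 2m-1$ --- equivalently $b=0$ and $a\ge 1$ in the notation of (\ref{mnd2kab-eq}) --- where $\nu_{m,n}=m-1=\dim\M[m,n]$, so that $\tilde{H}_{\nu_{m,n}}(\M[m,n];\Z)$ is the kernel of the top boundary operator, hence a subgroup of the free group $\tilde{C}_{m-1}(\M[m,n];\Z)$, and so free. What remains is $n\in\{2m-4,2m-3,2m-2\}$, i.e.\ $b=1$ and $k\in\{0,1,2\}$, where $\nu_{m,n}=m-2$.

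For these cases I would argue by induction on $m+n$, verifying the base cases $m\le 3$ directly, and in the inductive step apply the $m\leftrightarrow n$ swapped form of the 00-01-11 sequence of Section~\ref{exseq00-01-11-sec} to the pair $(\M[m,n],\M[m-1,n])$. Writing $d=m-2$, this yields an exact sequence
\[
\tilde{H}_{d}(\M[m-1,n-1])^{\oplus n}\;\xrightarrow{\partial_*}\;\tilde{H}_{d}(\M[m-1,n])\;\xrightarrow{f}\;\tilde{H}_{d}(\M[m,n])\;\xrightarrow{g}\;\tilde{H}_{d-1}(\M[m-1,n-1])^{\oplus n},
\]
in which $\partial_*$ is, up to sign, the direct sum over the $n$ vertices $t$ of the second block of the maps $\tilde{H}_{d}(\M[{[2,m],[n]\setminus\{t\}}])\to\tilde{H}_{d}(\M[{[2,m],[n]}])$ induced by inclusion (here $\M[{[2,m],[n]}]\cong\M[m-1,n]$ and $\M[{[2,m],[n]\setminus\{t\}}]\cong\M[m-1,n-1]$). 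Now both $\tilde{H}_{d}(\M[m-1,n])$ and $\tilde{H}_{d}(\M[m-1,n-1])$ are \emph{top} homology groups of $(m-2)$-dimensional chessboard complexes, hence free; and a short check using Theorem~\ref{smallest-thm} shows that $\tilde{H}_{d-1}(\M[m-1,n-1])$ is either $0$ or the bottom nonvanishing homology of a chessboard complex with strictly smaller $m+n$ still satisfying $n'\ge 2m'-4$, hence free by the inductive hypothesis. Consequently $\mathrm{im}(g)$ is a subgroup of a free group, so $\tilde{H}_{d}(\M[m,n])\cong\ker(g)\oplus\mathrm{im}(g)$ with $\mathrm{im}(g)$ free and $\ker(g)=\mathrm{im}(f)\cong\tilde{H}_{m-2}(\M[m-1,n])/\mathrm{im}(\partial_*)$; since the numerator is free, the conjecture comes down, for these cases, to the purity statement that $\mathrm{im}(\partial_*)$ is a pure subgroup of the free group $\tilde{H}_{m-2}(\M[m-1,n])$ --- equivalently, that $\mathrm{coker}(\partial_*)$ is torsion-free.

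This purity statement is where the real work lies, and the step I expect to be the main obstacle; it is precisely the point at which Friedman and Hanlon's rational formula and {\shareshian} and Wachs' torsion results stop giving information. I see two plausible lines of attack. The first is to compute $\partial_*$ in explicit bases: the top cycles of these chessboard pseudomanifolds admit concrete alternating-sum descriptions of the sort used for $z$, $z'$ and the $z_{uv}$ in the proof of Theorem~\ref{m55-thm}, and one would show that the resulting integer matrix has, for every $r$, greatest common divisor $1$ among its $r\times r$ minors. The second is to pass to coefficients in a prime field: the exact-sequence argument above runs verbatim over $\Z_p$, so it suffices to prove that $\dim_{\Z_p}\tilde{H}_{m-2}(\M[m,n];\Z_p)$ equals the rational Betti number supplied by Theorem~\ref{chessfinite-thm} (Friedman--Hanlon); for $p=3$ this means sharpening the $\Z_3$-dimension estimates obtained from the 00-$\Gamma$-11 and $\Gamma$-21-23 sequences (Sections~\ref{exseq00-G-11-sec} and \ref{exseqG-21-23-sec}) into exact equalities along the line $b=1$, where the induction behind Theorem~\ref{boundsintro-thm} no longer closes and must be replaced by a direct analysis; for $p\neq 3$ one additionally needs to rule out $p$-torsion, which should follow from {\shareshian} and Wachs' exponent computations recorded in Table~\ref{chessexp-fig} (or from running the $\Z_p$-argument for every prime). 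In all of this, the decisive difficulty is to control the connecting homomorphism $\partial_*$ precisely enough to pin down its cokernel.
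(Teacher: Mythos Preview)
This statement is a \emph{conjecture} in the paper, not a theorem; there is no proof in the paper to compare against. Immediately after stating it, the paper records its status: ``The conjecture is known to be true in all cases but $n=2m-4$ and $n=2m-3$; Shareshian and Wachs \cite{ShWa} settled the case $n=2m-2$.'' Your proposal accurately mirrors this. The ``only if'' direction does follow from Theorems~\ref{chesstorsion-thm} and \ref{chessallover-thm} as you say; the range $n \ge 2m-1$ is top homology and hence free; and $n=2m-2$ is the Shareshian--Wachs result. What remains --- precisely the open cases $n\in\{2m-4,\,2m-3\}$ --- you reduce via the swapped 00-01-11 sequence to the assertion that the connecting map $\partial_*$ has torsion-free cokernel, and then you stop, correctly flagging this as ``the main obstacle.'' It is, and neither of your two suggested attacks closes it.

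A couple of specific comments on those attacks. Your first approach (explicit bases and minors) is in principle viable but amounts to an unbounded family of integer-matrix computations, one for each $m$; nothing in the paper's machinery supplies the uniform control you would need. Your second approach has a misfire: you invoke Table~\ref{chessexp-fig} to rule out $p$-torsion for $p\neq 3$, but that table concerns only the range $m\le n\le 2m-5$, which is disjoint from the range $n\ge 2m-4$ under discussion; it gives no information here. Similarly, sharpening the $\Z_3$ bounds of Theorem~\ref{bettichess-thm} ``along the line $b=1$'' is exactly the missing ingredient --- that theorem requires $b\ge k+2$, and extending it to $b=1$ is not a matter of bookkeeping but the heart of the open problem. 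In short, your outline is a reasonable framing of why the conjecture is hard, but it is not a proof, and the paper does not claim one either.
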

  The conjecture is known to be true in all
  cases but 
  $n=2m-4$ and $n=2m-3$;
  {\shareshian} and Wachs \cite{ShWa} settled the case
  $n=2m-2$.
  \begin{corollary}[{\shareshian} \& Wachs \cite{ShWa}]
    For all $(m,n) \neq (1,1)$, we have that
    $\tilde{H}_{\nu_{m,n}}(\M[m,n]; \Z)$
    is nonzero.
    \label{chesstorsion-cor}
  \end{corollary}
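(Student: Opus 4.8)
The plan is to assemble the corollary from the two structural results already cited, splitting according to whether the bottom homology group is finite or infinite. Fix $(m,n)$ with $1\le m\le n$ and $(m,n)\neq(1,1)$; the goal is to show $\tilde{H}_{\nu_{m,n}}(\M[m,n];\Z)\neq 0$. I would split into the case $n\ge 2m-4$ together with the three pairs $(m,n)\in\{(6,6),(7,7),(8,9)\}$, and the complementary case $m\le n\le 2m-5$ with $(m,n)\notin\{(6,6),(7,7),(8,9)\}$.

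In the first case, Theorem~\ref{chessfinite-thm} applies directly: since we are outside the finite regime (namely $n\le 2m-5$ with $(m,n)\notin\{(6,6),(7,7),(8,9)\}$), and since $(m,n)\neq(1,1)$, the group $\tilde{H}_{\nu_{m,n}}(\M[m,n];\Z)$ is infinite, hence in particular nonzero. Here I would stress explicitly that the three Friedman--Hanlon exceptional pairs $(6,6)$, $(7,7)$, $(8,9)$ belong to this case, not to the finite one, so they are covered for free and need no separate treatment.

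In the second case, first observe that $m\le 2m-5$ forces $m\ge 5$, so every such pair satisfies $5\le m\le n\le 2m-5$; moreover $(m,n)\neq(8,9)$, since that pair was excluded. Hence Theorem~\ref{chesstorsion-thm} gives nonvanishing $3$-torsion in $\tilde{H}_{\nu_{m,n}}(\M[m,n];\Z)$, so this group is nonzero. (If one wishes to keep the argument independent of the computer calculation originally underlying Theorem~\ref{chesstorsion-thm}, one may replace its use at the base pair $(5,5)$ by the computer-free Theorem~\ref{m55-thm}, which yields $\tilde{H}_2(\M[5,5];\Z)\cong\Z_3$, and then propagate via the Shareshian--Wachs induction.) Combining the two cases completes the proof.

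I expect the only delicate point to be the case bookkeeping at the seam between the two regimes: correctly placing the pairs $(6,6)$, $(7,7)$, $(8,9)$ on the infinite side of Theorem~\ref{chessfinite-thm}, and verifying that $(1,1)$ is genuinely the unique pair the argument must exclude (it is the one pair for which $\M[m,n]$ is contractible). There is no new topological input beyond Theorems~\ref{chessfinite-thm}, \ref{chesstorsion-thm}, and~\ref{m55-thm}; the content is purely the combinatorial assembly of these statements.
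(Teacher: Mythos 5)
Your proposal is correct and follows essentially the same route as the paper, which states the corollary without separate proof precisely because it is the immediate assembly of Theorem~\ref{chessfinite-thm} (infinite, hence nonzero, homology when $n \ge 2m-4$ or $(m,n) \in \{(6,6),(7,7),(8,9)\}$, with $(1,1)$ the lone excluded pair) and Theorem~\ref{chesstorsion-thm} (nonvanishing $3$-torsion in the remaining range $m \le n \le 2m-5$). Your case bookkeeping, including placing the three Friedman--Hanlon exceptional pairs on the infinite side, matches the paper's intent exactly.
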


  \section{Higher-degree homology}
  \label{generalchess-sec}

  In Section~\ref{higherchess-sec}, we detect $3$-torsion in
  higher-degree homology groups of $\M[m,n]$. 
  In Section~\ref{boundschess-sec}, we proceed with
  upper bounds on the dimension of the homology over $\Z_3$.

  \subsection{$3$-torsion in higher-degree homology groups}
  \label{higherchess-sec}

  This section builds on work previously published in the author's
  thesis \cite{thesis,lmnthesis}. 
  Fix $n,d \ge 0$ and
  let $\gamma$ be an element in $\tilde{H}_{d-1}(\M[n];\Z)$; note
  that we consider the matching complex $\M[n]$.
  For each $k \ge 0$, define a map
  \[
  \left\{
  \begin{array}{l}
    \theta_k : \tilde{H}_{k-1}(\M[k,k+1];\Z)
    \rightarrow \tilde{H}_{k-1+d}(\M[2k+1+n];\Z) \\
    \theta_k(z) =  z \wedge \gamma^{(2k+1)},
  \end{array}
  \right.
  \]
  where we obtain $\gamma^{(2k+1)}$ from
  $\gamma$ by replacing each occurrence of the vertex $i$ with 
  $i+2k+1$ for every $i \in [n]$. 

  For any prime $p$, we have that
  $\theta_k$ induces a homomorphism
  \[
  \theta_{k} \tensor_\Z \iota_p : \tilde{H}_{k-1}(\M[k,k+1];\Z)
  \tensor_\Z \Z_p \rightarrow
  \tilde{H}_{k-1+d}(\M[2k+1+n];\Z) \tensor_\Z
  \Z_p,
  \]
  where $\iota_p : \Z_p \rightarrow \Z_p$ is the identity.
  The following result about the matching complex is a special case of
  a more general result from a previous paper \cite{bettimatch}.
  \begin{theorem}[Jonsson \cite{bettimatch}]
    Fix $k_0 \ge 0$.
    With notation and assumptions as above, 
    if $\theta_{k_0} \tensor_\Z \iota_p$ is a monomorphism, 
    then $\theta_{k} \tensor_\Z \iota_p$ is a monomorphism for each
    $k \ge k_0$.
    \label{torsionallover-thm}
  \end{theorem}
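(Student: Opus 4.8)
The plan is to reduce to the single inductive step: assuming $\theta_k \tensor_\Z \iota_p$ is a monomorphism, show that $\theta_{k+1} \tensor_\Z \iota_p$ is a monomorphism. The full statement then follows by induction on $k$ starting from $k=k_0$. To carry out the step, the natural tool is a long exact sequence (analogous to the $00$-$12$-$21$-$22$ sequence recorded earlier in the excerpt, but for the matching complex $\M[n]$ rather than the chessboard complex) that relates $\tilde{H}_*(\M[k+1,k+2])$ to lower pieces built from $\M[k,k+1]$; concretely, one fixes two vertices in one block, say the last two, and filters $\M[k+1,k+2]$ according to how a fixed vertex is matched. Since $\M[k,k+1]$ and $\M[k+1,k+2]$ are orientable pseudomanifolds (as Shareshian--Wachs observed and as is used in the proof of Theorem~\ref{m55-thm}), the relevant portion of this sequence in top degree is especially simple, and the top homology class of $\M[k+1,k+2]$ can be expressed in terms of that of $\M[k,k+1]$ by an explicit ``wedge with one more edge'' formula.

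Next I would set up the comparison between the two maps $\theta_k$ and $\theta_{k+1}$. Both are defined by wedging a fundamental-type class with the fixed element $\gamma$ (suitably shifted), so one builds a commutative diagram: the left-hand column carries $\tilde{H}_{k-1}(\M[k,k+1])$ to $\tilde{H}_{k}(\M[k+1,k+2])$ via the pseudomanifold relation above, the right-hand column carries $\tilde{H}_{k-1+d}(\M[2k+1+n])$ to $\tilde{H}_{k+1+d}(\M[2k+3+n])$ by the corresponding operation on the larger matching complex, and the horizontal maps are $\theta_k$ and $\theta_{k+1}$ respectively. Tensoring the whole diagram with $\Z_p$ over $\Z$ (which is right-exact, so one must track where exactness is actually needed), the hypothesis that $\theta_k \tensor \iota_p$ is injective, combined with injectivity of the left-hand vertical map on the relevant classes, should force $\theta_{k+1} \tensor \iota_p$ to be injective on a generating set, hence injective.

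I expect the main obstacle to be precisely the interaction of these maps with the functor $- \tensor_\Z \Z_p$: the source group $\tilde{H}_{k-1}(\M[k,k+1];\Z)$ need not be free, so one cannot naively conclude that a map which is injective integrally remains injective mod $p$, and conversely one is given mod-$p$ injectivity and must propagate it. The resolution is to work throughout with the mod-$p$ chain complexes directly (i.e.\ prove the pseudomanifold relation and the wedge formula at the level of cycles in $\tilde{C}_*(\,\cdot\,;\Z_p)$), so that no universal-coefficient juggling is needed; the orientability of the pseudomanifolds guarantees the top cycle survives mod $p$. A secondary technical point is checking that the shifted copy $\gamma^{(2k+1)}$ really does interact with the new top cycle ``independently'' — that the wedge of the fundamental class of $\M[k+1,k+2]$ with $\gamma^{(2k+3)}$ is, up to sign and boundaries, the image of $\theta_k(z) \wedge (\text{new edge})$ — which is where the explicit combinatorial description of the pseudomanifold fundamental classes (as in the formulas for $z$ and $z'$ in the proof of Theorem~\ref{m55-thm}) does the real work. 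Since the full argument appears in the author's previous paper \cite{bettimatch} in greater generality, here one need only extract and specialize it; the write-up will mostly consist of citing that construction and verifying the hypotheses match.
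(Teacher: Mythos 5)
First, note that the paper you are asked to compare against does not actually prove this statement: Theorem~\ref{torsionallover-thm} is imported verbatim from \cite{bettimatch}, so falling back on ``cite the previous paper and specialize'' is what the author does, but it cannot count as a proof in your proposal. Judged as a standalone argument, your inductive square has a genuine gap at its center: the right-hand vertical map, the ``corresponding operation'' $R\colon \tilde{H}_{k-1+d}(\M[2k+1+n];\Z_p)\to \tilde{H}_{k+d}(\M[2k+3+n];\Z_p)$, is never constructed, and your deduction needs $R$ to be injective at least on the image of $\theta_k\tensor_\Z\iota_p$. But a statement of exactly that shape --- that passing from a class on $2k+1+n$ vertices to the corresponding class on two more vertices is injective mod $p$ --- is essentially the content of the theorem being proved (and of the general result of \cite{bettimatch} it specializes), so as written the plan is circular: the hard analytic input is hidden in an undefined arrow. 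The workable orientation is the reverse one: use a filtration of $\M[2k+3+n]$ with respect to the two new vertices to produce a map (a connecting or quotient map in a long exact sequence) going \emph{down}, under which $z_{k+1,k+2}\wedge\gamma^{(2k+3)}$ is sent to a multiple, prime to $p$, of $z_{k,k+1}\wedge\gamma^{(2k+1)}$; then nonvanishing mod $p$ propagates upward with no injectivity hypothesis on any upward map, and the induction closes.

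Two further points. The ``wedge with one more edge'' identity you lean on is false at the level you need it: $z_{k+1,k+2}$ is an alternating sum over $\Symm{[k+2]}$ and is not of the form $z_{k,k+1}^{(\mathrm{shifted})}\wedge e$; expanding over the partner of the new vertex expresses it as a signed sum of $k+2$ such wedges, one for each choice of partner, so the chain-level bookkeeping you defer to ``the explicit formulas for $z$ and $z'$'' does not reduce to a single-edge relation and must handle this sum (this is precisely where the exact-sequence machinery earns its keep). Finally, the universal-coefficient worry you flag is misplaced: the source $\tilde{H}_{k-1}(\M[k,k+1];\Z)\cong\Z$ is free, as the paper notes via the orientable-pseudomanifold observation of Shareshian and Wachs, so tensoring the source causes no trouble; the genuine subtlety lives in the large groups $\tilde{H}_{*}(\M[2k+1+n];\Z)$, about whose structure nothing is known and which is exactly why the argument must be arranged so that no injectivity or freeness claims about them are required.
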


  As alluded to
  in the proof of Theorem~\ref{m55-thm} in
  Section~\ref{bottomchess-sec}, we have that 
  $\M[k,k+1]$ is an orientable pseudomanifold of dimension $k-1$;
  hence $\tilde{H}_{k-1}(\M[k,k+1];\Z) \cong \Z$.
  {\shareshian} and Wachs \cite[\S6]{ShWa} observed that this group is
  generated by the cycle
  \[
  z_{k,k+1} = \sum_{\pi \in \Symm{[k+1]}}
  \sgn(\pi) \cdot 1\n{\pi(1)}\wedge \cdots \wedge
  k\n{\pi(k)}.
  \]
  Note that the sum is over all permutations on $k+1$ elements.
  Theorem~\ref{torsionallover-thm} implies the following
  result.
  \begin{corollary}
    With notation and assumptions as in
    Theorem~{\rm\ref{torsionallover-thm}}, if 
    $(z_{k_0,k_0+1} \wedge  \gamma^{(2k_0+1)}) \tensor 1$ is nonzero
    in $\tilde{H}_{k_0-1+d}(\M[2k_0+1+n];\Z) \tensor \Z_p$, 
    then
    $(z_{k,k+1} \wedge  \gamma^{(2k+1)}) \tensor 1$ is nonzero
    in $\tilde{H}_{k-1+d}(\M[2k+1+n];\Z) \tensor \Z_p$
    for all $k \ge k_0$. 
    \label{torsionallover-cor}
  \end{corollary}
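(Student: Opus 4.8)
The plan is to deduce the corollary directly from Theorem~\ref{torsionallover-thm} by applying it to the specific generator $z_{k,k+1}$ of $\tilde{H}_{k-1}(\M[k,k+1];\Z)$. The point is that the abstract monomorphism statement in Theorem~\ref{torsionallover-thm} can be tested on a single generator, since $\tilde{H}_{k-1}(\M[k,k+1];\Z)\cong\Z$ is cyclic (as $\M[k,k+1]$ is an orientable pseudomanifold of dimension $k-1$, as noted just before the corollary). First I would observe that $\tilde{H}_{k-1}(\M[k,k+1];\Z)\tensor_\Z\Z_p\cong\Z_p$, generated by $z_{k,k+1}\tensor 1$, and that by definition of $\theta_k$ we have $(\theta_k\tensor_\Z\iota_p)(z_{k,k+1}\tensor 1) = (z_{k,k+1}\wedge\gamma^{(2k+1)})\tensor 1$.

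The key step is the equivalence, for a homomorphism out of a cyclic group $\Z_p$, between being a monomorphism and sending the chosen generator to a nonzero element. Concretely: $\theta_{k}\tensor_\Z\iota_p$ is a monomorphism if and only if its kernel is trivial, and since the source is $\Z_p$ with $p$ prime, the kernel is either $0$ or all of $\Z_p$; it is all of $\Z_p$ precisely when the generator $z_{k,k+1}\tensor 1$ maps to $0$. Hence $\theta_k\tensor_\Z\iota_p$ is a monomorphism if and only if $(z_{k,k+1}\wedge\gamma^{(2k+1)})\tensor 1\neq 0$ in $\tilde{H}_{k-1+d}(\M[2k+1+n];\Z)\tensor\Z_p$. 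Applying this equivalence once with $k=k_0$ to translate the hypothesis, invoking Theorem~\ref{torsionallover-thm} to propagate the monomorphism property to all $k\ge k_0$, and then applying the equivalence again for each such $k$ to translate back into the nonvanishing statement, completes the argument.

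I do not anticipate any real obstacle here: the corollary is essentially a restatement of Theorem~\ref{torsionallover-thm} once one unwinds the definition of $\theta_k$ and uses the cyclicity of the source. The only thing to be careful about is that $\theta_k$ is defined on the integral homology group while the hypothesis and conclusion are phrased in terms of the mod-$p$ reductions; but this is handled precisely by the factorization $(\theta_k\tensor_\Z\iota_p)(z\tensor 1) = (\theta_k(z))\tensor 1$, which is immediate from the definition of the induced map on tensor products. Thus the proof is short and formal.
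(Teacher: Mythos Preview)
Your proposal is correct and matches the paper's approach: the paper gives no explicit proof, simply stating that Theorem~\ref{torsionallover-thm} implies the corollary, and your argument spells out precisely why---namely, that $\tilde{H}_{k-1}(\M[k,k+1];\Z)\tensor\Z_p\cong\Z_p$ is generated by $z_{k,k+1}\tensor 1$, so the monomorphism condition on $\theta_k\tensor\iota_p$ is equivalent to the nonvanishing of its value on this generator.
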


  We will also need a result about the bottom nonvanishing homology
  of the matching complex. Define
  \begin{eqnarray}
    \nonumber
    \gamma_{3r} &=& (12-23) \wedge (45-56) \wedge (78-89) \\
    & & \wedge \cdots \wedge ((3r-2)(3r-1)-(3r-1)(3r));
    \label{gamman-eq}
  \end{eqnarray}
  this is a cycle in both $\tilde{C}_{r-1}(\M[3r]; \Z)$ 
  and $\tilde{C}_{r-1}(\M[3r+1]; \Z)$.
  \begin{theorem}[Bouc \cite{Bouc}]
    For $r \ge 2$, we have that
    $\tilde{H}_{r-1}(\M[3r+1];\Z) \cong \Z_3$.
    Moreover, this group is generated by 
    $\gamma_{3r}$ and hence by any element obtained from
    $\gamma_{3r}$ by permuting the underlying vertex set.
    \label{bouctor-thm}
  \end{theorem}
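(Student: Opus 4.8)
The plan is to obtain the statement $\tilde{H}_{r-1}(\M[3r+1];\Z)\cong\Z_3$ as a structural consequence of Bouc's decomposition of the rational homology of the matching complex together with a torsion analysis. First I would recall from Bouc \cite{Bouc} (see also Wachs \cite{Wachs}) that $\M[n]$ is $(\nu_n-1)$-connected with $\nu_n=\lceil\frac{n-2}{3}\rceil-1$, so that for $n=3r+1$ the bottom nonvanishing homology sits in degree $r-1$, and that the rational homology $\tilde H_{r-1}(\M[3r+1];\Q)$ vanishes in exactly this range: the Bouc formula shows $\tilde H_{r-1}(\M[3r+1];\Q)=0$ for $r\ge 2$. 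Hence $\tilde H_{r-1}(\M[3r+1];\Z)$ is a finite abelian group, and everything reduces to computing its torsion.

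Next I would pin down that the torsion is $3$-primary and cyclic of order $3$. The cleanest route is an induction on $r$ using the long exact sequences relating $\M[n]$ to $\M[n-1]$ and $\M[n-2]$ (the analogue for the matching complex of the $00$-$01$-$11$ sequence), exactly as in the standard treatment: the pair $(\M[n],\M[n-1])$ gives a long exact sequence whose relative term is $\bigoplus_{j}\,\{j\,n\}\otimes\tilde H_{*}(\M[{[n-2]}])$, and splicing in the bottom degree isolates $\tilde H_{r-1}(\M[3r+1])$ between $\tilde H_{r-1}(\M[3r])$ and $r$ copies of $\tilde H_{r-2}(\M[3r-2])\cong\tilde H_{r-2}(\M[3(r-1)+1])$. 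The base case $r=2$, i.e. $\tilde H_1(\M[7];\Z)\cong\Z_3$, is classical (and can be checked directly, or quoted from Bouc); the inductive step propagates both the order-$3$ bound and nonvanishing, the latter via the explicit cycle $\gamma_{3r}$.

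Finally I would verify that $\gamma_{3r}$, as defined in (\ref{gamman-eq}), is a cycle in $\tilde C_{r-1}(\M[3r+1];\Z)$ representing a generator. That $\partial\gamma_{3r}=0$ is the same telescoping computation that makes each factor $(i(i{+}1)-(i{+}1)(i{+}2))$ a $1$-cycle in the relevant $\M[3]$ and makes the join of cycles a cycle. To see it generates, observe that under the inductive splicing $\gamma_{3r}=\gamma_{3(r-1)}\wedge\big((3r-2)(3r-1)-(3r-1)(3r)\big)$ maps to (a nonzero multiple of) $\gamma_{3(r-1)}$ under the connecting map into $\tilde H_{r-2}(\M[3r-2])$, which by induction is a generator of $\Z_3$; since the whole group is killed by $3$, $\gamma_{3r}$ must generate. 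The last sentence of the statement is then immediate: the symmetric group $\Symm{3r+1}$ acts on the cyclic group $\Z_3$, hence sends a generator to a generator, so every permuted copy of $\gamma_{3r}$ also generates.

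The main obstacle is the torsion bookkeeping in the induction: the long exact sequence only gives $\tilde H_{r-1}(\M[3r+1];\Z)$ as an extension built from $r$ copies of $\Z_3$ and a piece of $\tilde H_{r-1}(\M[3r];\Z)$, so one must argue that almost all of these copies cancel — that the relevant connecting and inclusion maps have the right ranks — to bring the order back down to exactly $3$. This is precisely where Bouc's original argument does the delicate work (identifying the image of the connecting map and using the rational vanishing to force the cokernel to be small), and in a self-contained write-up one would either reproduce that rank computation or invoke the rational Bouc formula to control Euler characteristics and thereby the sizes of the finite groups at each stage.
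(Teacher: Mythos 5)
This theorem is not proved in the paper at all: it is quoted from Bouc's article, so there is no internal argument to compare yours against, and your proposal has to stand on its own. It does not, because the step carrying all the content is missing. For the pair $(\M[3r+1],\M[3r])$ the relative homology in degree $d$ is $\bigoplus_{j=1}^{3r}\tilde{H}_{d-1}(\M[3r-1])$, i.e.\ $3r$ copies of the matching complex on $3r-1$ vertices, not $r$ copies of $\tilde{H}_{d-1}(\M[3r-2])$; and since $\M[3r-1]$ is $(r-2)$-connected this term vanishes in degree $d=r-1$. So the sequence does not present $\tilde{H}_{r-1}(\M[3r+1];\Z)$ as an extension of copies of $\Z_3$ by a piece of $\tilde{H}_{r-1}(\M[3r];\Z)$; it presents it as the cokernel of a connecting map between two \emph{infinite} groups, namely $\tilde{H}_{r-1}(\M[3r-1];\Z)^{3r}\to\tilde{H}_{r-1}(\M[3r];\Z)$. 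Identifying that cokernel as $\Z_3$ is exactly Bouc's delicate computation, which you explicitly defer rather than supply. Your proposed fallback of ``controlling the sizes of the finite groups'' via the rational formula and Euler characteristics cannot close this gap: Euler characteristics see only ranks and are completely blind to torsion, and the neighbouring groups in the sequence are not finite in the first place.

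Two further problems. First, the generation argument is ill-posed: $\gamma_{3r}$ involves only the vertices $1,\dots,3r$, hence lies in the subcomplex $\M[3r]$ and maps to zero in $\tilde{H}_{r-1}(\M[3r+1],\M[3r])$, so there is no connecting homomorphism sending the absolute class $\gamma_{3r}$ to $\gamma_{3(r-1)}$ as you describe; nonvanishing of $\gamma_{3r}$ (the other half of the theorem) is therefore also not established. Second, the parts that do work are the easy ones: the finiteness reduction via Bouc's rational formula, the verification that $\gamma_{3r}$ is a cycle, and the observation that once the group is $\Z_3$ any permuted copy of a generator generates. In short, the sketch correctly locates the difficulty but neither the invoked exact sequence (whose shape you misstate) nor the suggested patches can resolve it; a genuine proof needs either Bouc's representation-theoretic argument or the finer exact-sequence machinery of \cite{bettimatch}, where the inductive step is a real computation rather than bookkeeping.
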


  Assume that $m+n \equiv 0 \pmod{3}$ and $m \le n \le 2m$.
  Define the cycle $\gamma_{m,n}$ in
  $\tilde{H}_{\nu_{m,n}}(\M[m,n]; \Z)$ recursively as
  follows, the base case being $\gamma_{1,2} = 1\n{1}-1\n{2}$:
  \begin{equation}
  \gamma_{m,n} =
  \left\{
  \begin{array}{ll}
    \gamma_{m-1,n-2} \wedge (m(\n{n-1})-m\n{n}) & \mbox{if } m<n; \\
    \gamma_{m-2,n-1} \wedge ((m-1)\n{n}-m\n{n}) & \mbox{if } m=n.
  \end{array}
  \right.
  \label{gammamn-eq}
  \end{equation}
  For $n > m$, we define $\gamma_{n,m}$ by
  replacing $i\n{j}$ with 
  $j\n{i}$ in $\gamma_{m,n}$ for each $i \in [m]$ and $j \in [n]$.

  Recall that $\nu_{m,n} = \frac{m+n-4}{3}$ whenever
  $m \le n \le 2m-2$.
  \begin{theorem}
    There is $3$-torsion
    in $\tilde{H}_{d}(\M[m,n]; \Z)$ whenever
    \[
    \left\{
    \begin{array}{ccl}
      m+1 \le n \le 2m-5 \\ \\
      \left\lceil\frac{m+n-4}{3}\right\rceil \le d \le m-3
    \end{array}
    \right.
    \Longleftrightarrow
    \left\{
    \begin{array}{ccl}
      k &\ge& 0\\
      a &\ge& 1\\
      b &\ge& 2,
    \end{array}
    \right.
    \]
    where $k$, $a$, and $b$ are defined as in 
    {\rm(\ref{mnd2kab-eq})}. 
    Moreover, there is $3$-torsion
    in $\tilde{H}_{d}(\M[m,m]; \Z)$ whenever
    \[
    \left\lceil\frac{2m-4}{3}\right\rceil \le d \le m-4
    \Longleftrightarrow
    \left\{
    \begin{array}{ccl}
      k &\ge& 0\\
      a &=& 0 \\
      b &\ge& 3.
    \end{array}
    \right.
    \]
    \label{chessallover-thm}
  \end{theorem}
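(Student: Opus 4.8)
The plan is to reduce both torsion statements to a single application of Corollary~\ref{torsionallover-cor}, feeding into it a carefully chosen cycle $\gamma$ in the matching complex whose bottom-nonvanishing $3$-torsion is guaranteed by Bouc's theorem (Theorem~\ref{bouctor-thm}). Concretely, fix the parameters $k$, $a$, $b$ as in (\ref{mnd2kab-eq}), and observe that the asserted chessboard complex $\M[m,n]$ with $m = k+a+3b-1$, $n = k+2a+3b-1$, $d = k+a+2b-2$ sits inside the matching complex $\M[m+n]$ in the standard way (Section~\ref{basic-sec}). The idea is to split the vertex set of $K_{m,n}$ as a join: take a ``$\M[j,j+1]$-part'' on $2j+1$ vertices for a suitable $j$, and put the remaining vertices into a matching-complex piece whose homology realizes Bouc's $\Z_3$. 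Then $\theta_j(z_{j,j+1}) = z_{j,j+1} \wedge \gamma^{(2j+1)}$ is a cycle in $\M[m+n] = \M[2j+1 + (m+n-2j-1)]$, and it will in fact lie in the subcomplex $\M[m,n]$ provided the bipartite structure is respected; the point of Corollary~\ref{torsionallover-cor} is that nonvanishing mod $3$ at one value $j = j_0$ propagates to all larger $j$, so it suffices to verify one base case in each of the two regimes.

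First I would handle the ``generic'' case $m < n$ (equivalently $a \ge 1$, $b \ge 2$, $k \ge 0$). Here the natural candidate cycle is $\gamma_{m,n}$ defined recursively in (\ref{gammamn-eq}): it is built as a wedge of $(b$-many, roughly$)$ ``edge differences'' together with a bottom piece that is a copy of $\gamma_{3r}$ from Bouc's theorem, so $\gamma_{m,n}$ realizes the bottom nonvanishing homology class when $m+n\equiv 0\pmod 3$. To get torsion in degree $d$ above the bottom degree — i.e.\ for $k>0$ — I would wedge $\gamma_{m,n}$-type data with additional $z_{j,j+1}$ factors, using the join decomposition $\M[m,n] \supseteq \M[m',n'] * \M[m'',n'']$ with $\M[m',n']$ a pseudomanifold $\M[j,j+1]$ contributing the ``$\theta_j$'' factor and $\M[m'',n'']$ carrying a Bouc cycle. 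The arithmetic of (\ref{mnd2kab-eq}) is arranged so that the degree of such a wedge is exactly $k+a+2b-2$, and $k$ measures precisely how far one is above the bottom; this is why the theorem's hypothesis $d \le m-3$ appears (it bounds $b \ge 2$, leaving room for the pseudomanifold factor). The base case $k = k_0 = 0$ is then covered by Theorem~\ref{chesstorsion-thm} (or by the explicit $\M[5,5]$ computation, Theorem~\ref{m55-thm}, when one is at the very smallest instance), and Corollary~\ref{torsionallover-cor} pushes it up to all $k \ge 0$.

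Second, the square case $m = n$ (so $a = 0$, $b \ge 3$, $k \ge 0$, and $d$ runs up to $m-4$) is handled the same way but with the second branch of (\ref{gammamn-eq}), $\gamma_{m,m} = \gamma_{m-2,m-1} \wedge ((m-1)\n{m} - m\n{m})$, as the bottom ingredient; note the stronger constraint $b \ge 3$ (rather than $b \ge 2$) reflects the extra cost of the diagonal recursion step and corresponds to the condition $d \le m-4$. Again one wedges in $z_{j,j+1}$ factors to move up to degree $d$, verifies nonvanishing mod $3$ at the base case using Theorem~\ref{chesstorsion-thm}, and invokes Corollary~\ref{torsionallover-cor}.

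\emph{Main obstacle.} The delicate step is showing that the relevant wedge product is genuinely \emph{nonzero} in $\tilde H_d(\M[m,n];\Z_3)$ — not merely a cycle. Corollary~\ref{torsionallover-cor} only transports nonvanishing from $j_0$ to larger $j$; it does not produce the base case. So the real work is: (i) identify the correct base case $(k,a,b)$ with $k = 0$ in each regime and confirm it is exactly an instance of Theorem~\ref{chesstorsion-thm} (which is where $(m,n)\neq(8,9)$ and the congruence $m+n\equiv 1\pmod 3$ subtleties must be checked — one may need to bump $k_0$ from $0$ to $1$ in a few small exceptional triples, which is harmless for the inductive conclusion); and (ii) verify that the join/wedge construction on the chessboard side maps \emph{compatibly} onto the matching-complex construction feeding Corollary~\ref{torsionallover-cor}, i.e.\ that the bipartite-respecting cycle in $\M[m,n]$ has the same image mod $3$ as $z_{j,j+1} \wedge \gamma^{(2j+1)}$ under the inclusion $\M[m,n] \hookrightarrow \M[m+n]$. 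Once the bookkeeping of (\ref{mnd2kab-eq}) is in place, both (i) and (ii) are finite checks, but (ii) in particular requires care to ensure no unexpected cancellation occurs when a bipartite cycle is re-read inside the larger matching complex.
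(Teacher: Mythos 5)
Your overall architecture matches the paper's: build the class as $z_{j,j+1}$ wedged with a recursively defined chessboard cycle from (\ref{gammamn-eq}), view it inside the matching complex, and use Corollary~\ref{torsionallover-cor} to propagate nonvanishing mod $3$ upward in $k$. But there is a genuine gap exactly at the point you flag as the ``main obstacle'': you propose to ground the base case in Theorem~\ref{chesstorsion-thm} (or Theorem~\ref{m55-thm}), i.e.\ in nonvanishing of the bottom homology of a \emph{chessboard} complex. Corollary~\ref{torsionallover-cor}, however, requires as hypothesis that $(z_{k_0,k_0+1}\wedge\gamma^{(2k_0+1)})\tensor 1$ be nonzero in the homology of the ambient \emph{matching} complex $\M[2k_0+1+n]$ tensored with $\Z_3$, and the inclusion $\M[m,n]\hookrightarrow\M[m+n]$ transfers nonvanishing only in the opposite direction: a class nonzero in the chessboard complex may well die in $\M[m+n]$, so Theorem~\ref{chesstorsion-thm} cannot feed the corollary. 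The paper closes this gap differently: with $m_0=a+3b-2$, $n_0=2a+3b-3$, the $k=0$ cycle $w_1=z_{1,2}\wedge\gamma_{m_0,n_0-1}^{(1,2)}$ is, after relabelling vertices, literally Bouc's cycle $\gamma_{m_0+n_0+2}$ from (\ref{gamman-eq}), which generates $\tilde{H}_{(m_0+n_0-1)/3}(\M[m_0+n_0+3];\Z)\cong\Z_3$ by Theorem~\ref{bouctor-thm} (here $m_0+n_0+3\ge 13$). That identification, not Theorem~\ref{chesstorsion-thm} or the $\M[5,5]$ computation, is the base case; it works because every factor of $\gamma_{m,n}$ in (\ref{gammamn-eq}) is an edge-difference of the same shape as the factors of $\gamma_{3r}$. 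Without some such verification in the matching complex, your induction never starts.

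A second, independent gap: you only argue nonvanishing of the class in $\tilde{H}_d(\M[m,n];\Z)\tensor\Z_3$, which does not by itself yield $3$-torsion --- in a sizeable part of the stated range the integral group is infinite (Theorem~\ref{chessfinite-thm}, namely when $(b-1)(a+b-1)\le k$), so a class nonzero mod $3$ could a priori have infinite order. The paper supplies the missing half by showing the element has order exactly three: since $\tilde{H}_{\nu_{m_0,n_0}}(\M[m_0,n_0];\Z)$ is an elementary abelian $3$-group by Theorem~\ref{chesstorsion-thm}, $3\gamma_{m_0,n_0-1}$ is a boundary, hence so is $3w_{k+1}$, and combined with nonvanishing mod $3$ this gives a $\Z_3$ subgroup. (This is where Theorem~\ref{chesstorsion-thm} actually enters the argument --- for the order bound, not for the base case.) The same two repairs are needed in your $m=n$ branch, where the paper uses $w_{k+1}=z_{k+1,k+2}\wedge\gamma_{3b-2,3b-4}^{(k+1,k+2)}$, identifies $w_1$ with $\gamma_{6b-3}$ in $\M[6b-2]$, and bounds the order via the chessboard complex $\M[3b-2,3b-3]$.
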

  \begin{proof}
    Assume that $k \ge 0$, $a \ge  1$, and $b \ge 2$.
    Writing $m_0 = a+3b-2$ and 
    $n_0 = 2a+3b-3$, we have the inequalities
    \begin{equation}
      a+3b-2 \le 2a+3b-3 \le 2a+6b-9
      \Longleftrightarrow 
      m_0 \le n_0 \le 2m_0-5.
    \label{ab-eq}
    \end{equation}

    Note that $m_0+n_0 = 3a+6b-5 \equiv 1 \pmod{3}$.
    Define
    \[
    w_{k+1} = z_{k+1,k+2} \wedge \gamma_{m_0,n_0-1}^{(k+1,k+2)},
    \]
    where we obtain $\gamma_{m_0,n_0-1}^{(k+1,k+2)}$ from 
    the cycle $\gamma_{m_0,n_0-1}$ defined in (\ref{gammamn-eq}) by
    replacing $i\n{j}$ with
    $(i+k+1)(\n{j+k+2})$. View $\gamma_{m_0,n_0-1}$ as an element
    in the homology of $\M[m_0,n_0]$.
    Since $z_{k+1,k+2}$ has type
    $\brackom{k+1,k+2}{k+1}$ and since $\gamma_{m_0,n_0-1}$ has type
    $\brackom{a+3b-2,2a+3b-3}{a+2b-2}$ (or rather
    $\brackom{a+3b-2,2a+3b-4}{a+2b-2} \wedge \brackom{0,1}{0}$),
    we obtain that $w_{k+1}$ has type
    \[
    \brackom{k+1+a+3b-2,k+2+2a+3b-3}{k+1+a+2b-2} =
    \brackom{m,n}{d+1};
    \]
    hence we may view $w_{k+1}$ as an element in
    $\tilde{H}_d(\M[m,n];\Z)$. 

    Choosing $k=0$, we obtain that 
    \[
    w_1 = z_{1,2} \wedge \gamma_{m_0,n_0-1}^{(1,2)}. 
    \]
    We claim that $w_1$ has order three when viewed as an element in
    \[
    \tilde{H}_{\frac{m_0+n_0-1}{3}}(\M[m_0+n_0+3];\Z) = 
    \tilde{H}_{a+2b-2}(\M[3a+6b-2];\Z).
    \]
    Namely, we may relabel the vertices to transform
    $w_1$ into the
    cycle $\gamma_{m_0+n_0+2}$ defined in (\ref{gamman-eq}).
    Since $m_0 + n_0 + 3 \ge 13$, Theorem~\ref{bouctor-thm}
    yields the claim.

    Applying Corollary~\ref{torsionallover-cor}, we 
    conclude that $w_{k+1} \tensor 1$ is a nonzero element 
    in the group $\tilde{H}_{k+a+2b-2}(\M[2k+3a+6b-2];\Z) \tensor
    \Z_3 = \tilde{H}_{d}(\M[m+n];\Z) \tensor
    \Z_3$ for every $k \ge 0$. 
    As a consequence, $w_{k+1} \tensor 1$ is nonzero  
    also in 
    \[
    \tilde{H}_{k+a+2b-2}(\M[k+a+3b-1,k+2a+3b-1];\Z) \tensor
    \Z_3 = \tilde{H}_d(\M[m,n];\Z) \tensor \Z_3  
    \]
    for every $k \ge 1$. Since $\tilde{H}_{a+b-3}(\M[m_0,n_0];\Z)$ is
    an elementary $3$-group by 
    Theorem~\ref{chesstorsion-thm} and 
    (\ref{ab-eq}), the order of
    $\gamma_{m_0,n_0-1}$ in $\tilde{H}_{r}(\M[m_0,n_0];\Z)$
    is three. It follows that the order of $w_{k+1}$ in
    $\tilde{H}_d(\M[m,n];\Z)$ is three as well.

    The remaining case is $m=n$, in which case the upper bound on 
    $d$ is $m-4$ rather than $m-3$. Since $a=0$, we get
    \[
    \left\{
    \begin{array}{rcrcrcrcr}
      k & = & -2m &  + & 3d & + & 4 \\
      b & = &   m &  - &  d & - & 1
    \end{array}
    \right. 
    \Leftrightarrow
    \left\{
    \begin{array}{rcrcrcrcl}
      m &  = & k & + & 3 b & - & 1 \\
      d &  = & k & + & 2 b & - & 2.
    \end{array}
    \right. 
    \]
    Clearly, $k \ge 0$ and $b \ge 3$.

    Consider the cycle $w_{k+1} = z_{k+1,k+2} \wedge
    \gamma_{3b-2,3b-4}^{(k+1,k+2)}$. 
    By Corollary~\ref{torsionallover-cor}, $w_{k+1} \tensor 1$ is
    nonzero in
    $\tilde{H}_{k+2b-2}(\M[2k+6b-2];\Z) \tensor \Z_3$. Namely, up to
    the names of the vertices, $w_1$ coincides with
    $\gamma_{6b-3}$  in (\ref{gamman-eq}), which is a
    nonzero element of order three in the group
    $\tilde{H}_{2b-2}(\M[6b-2];\Z)$ by
    Theorem~\ref{bouctor-thm}; $b \ge 3$.
    We conclude that $w_{k+1} \tensor 1$ is
    a nonzero element in $\tilde{H}_{k+2b-2}(\M[k+3b-1,k+3b-1];\Z)
    \tensor \Z_3 = \tilde{H}_d(\M[m,m];\Z) \tensor \Z_3$. Since 
    $3b-3 \ge 6$, we have that
    $\gamma_{3b-2,3b-4}$ must have order three in
    $\tilde{H}_{2b-3}(\M[3b-2,3b-3];\Z)$; apply
    Theorem~\ref{chesstorsion-thm}. This implies that the same must
    be true for $w_{k+1}$ in $\tilde{H}_d(\M[m,m];\Z)$.
  \end{proof}

  \begin{corollary}
    The group $\tilde{H}_{5}(\M[8,9]; \Z) =
    \tilde{H}_{\nu_{8,9}}(\M[8,9]; \Z)$ contains nonvanishing  
    $3$-torsion. As a consequence, 
    there is nonvanishing $3$-torsion in 
    $\tilde{H}_{\nu_{m,n}}(\M[m,n]; \Z)$ whenever
    $m \le n \le 2m-5$.
  \end{corollary}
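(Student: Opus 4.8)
The plan is to obtain the $3$-torsion in $\tilde{H}_{\nu_{8,9}}(\M[8,9];\Z)$ directly from Theorem~\ref{chessallover-thm}, then combine this with Theorem~\ref{chesstorsion-thm} to cover all remaining pairs $(m,n)$ with $m\le n\le 2m-5$. First I would compute the parameters: for $(m,n)=(8,9)$ the transformation~(\ref{mnd2kab-eq}) gives $a=n-m=1$ and, since $\nu_{8,9}=\lceil\frac{8+9-4}{3}\rceil=5$, we get $b=m-d-1=8-5-1=2$ and $k=3d-m-n+4=15-17+4=2$. Thus $(k,a,b)=(2,1,2)$, which satisfies $k\ge 0$, $a\ge 1$, $b\ge 2$, so the first clause of Theorem~\ref{chessallover-thm} applies and yields nonvanishing $3$-torsion in $\tilde{H}_5(\M[8,9];\Z)=\tilde{H}_{\nu_{8,9}}(\M[8,9];\Z)$.

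Next I would assemble the global statement. Theorem~\ref{chesstorsion-thm} already gives nonvanishing $3$-torsion in $\tilde{H}_{\nu_{m,n}}(\M[m,n];\Z)$ for all $m\le n\le 2m-5$ with the single exception $(m,n)=(8,9)$. Since the previous paragraph handles exactly that missing case, the union covers every pair $(m,n)$ with $m\le n\le 2m-5$ without exception, which is the desired conclusion.

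The only subtlety worth checking carefully is that $(k,a,b)=(2,1,2)$ is \emph{not} excluded by any side condition in the first part of Theorem~\ref{chessallover-thm}: that theorem's first clause asks only for $k\ge 0$, $a\ge 1$, $b\ge 2$, with no further exceptions (the excluded triples $\{(1,0,2),(2,0,2),(2,1,2)\}$ appearing earlier in the excerpt pertain to Friedman--Hanlon's \emph{finiteness} characterization, Theorem~\ref{chessfinite-thm}, not to the existence of torsion). So there is no obstacle here; the argument is a direct specialization followed by a set-theoretic union. I would also remark, as the surrounding text already does, that this settles the last open instance of the connectivity/torsion picture left over after Theorems~\ref{smallest-thm} and~\ref{chessfinite-thm}, namely that $\M[8,9]$ has connectivity degree exactly $\nu_{8,9}-1$ with torsion in the bottom nonvanishing homology.
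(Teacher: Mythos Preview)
Your proof is correct and follows essentially the same approach as the paper: specialize Theorem~\ref{chessallover-thm} to $(k,a,b)=(2,1,2)$ for the $(8,9)$ case, then invoke Theorem~\ref{chesstorsion-thm} for all other pairs with $m\le n\le 2m-5$. Your added remark distinguishing the excluded triples in Theorem~\ref{chessfinite-thm} from the hypotheses of Theorem~\ref{chessallover-thm} is a helpful clarification but not logically required.
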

  \begin{proof} 
    The first statement is a consequence, of 
    Theorem~\ref{chessallover-thm}; choose $k=2$, $a=1$, and
    $b=2$. For the second statement, apply
    Theorem~\ref{chesstorsion-thm}.
  \end{proof}

  \begin{theorem}
    For $1 \le m \le n$, the group $\tilde{H}_{d}(\M[m,n]; \Z)$ is
    nonzero if and only if either
    \[
    \left\lceil\frac{m+n-4}{3}\right\rceil \le d \le m-2
    \Longleftrightarrow
    \left\{
    \begin{array}{ccl}
      k &\ge& 0\\
      a &\ge& 0\\
      b &\ge& 1
    \end{array}
    \right.
    \]
    or 
    \[
    \left\{
    \begin{array}{ccl}
      m &\ge& 1\\
      n &\ge& m+1\\
      d &=& m-1
    \end{array}
    \right.
    \Longleftrightarrow
    \left\{
    \begin{array}{ccl}
      k &\ge& 2-a\\
      a &\ge& 1\\
      b &=& 0,
    \end{array}
    \right.
    \]
    where $k$, $a$, and $b$ are defined as in {\rm(\ref{mnd2kab-eq})}.
    \label{chesshomology-thm}
  \end{theorem}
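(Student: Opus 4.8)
The plan is to deduce both implications by assembling results already in hand --- the connectivity bound of Bj\"{o}rner {\etal} (Theorem~\ref{smallest-thm}), the Friedman--Hanlon rank formula (Theorem~\ref{chessfinite-thm}), the higher-degree torsion statement (Theorem~\ref{chessallover-thm}), and the computation $\tilde H_2(\M[5,5];\Z)\cong\Z_3$ (Theorem~\ref{m55-thm}) --- together with one elementary new observation. I will work in the coordinates $(k,a,b)$ of (\ref{mnd2kab-eq}): recall that for $1\le m\le n$ the condition ``$k\ge0$, $a\ge0$, $b\ge1$'' is equivalent to $\lceil\frac{m+n-4}{3}\rceil\le d\le m-2$, the condition ``$a\ge1$, $b=0$, $k\ge2-a$'' is equivalent to ``$d=m-1$ and $n\ge m+1$'', and $\dim\M[m,n]=m-1$. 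I treat the vanishing (``only if'') and nonvanishing (``if'') directions separately.

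\emph{Vanishing.} Suppose $(m,n,d)$ lies in neither of the two ranges. I would check that then one of three things happens. If $d<\nu_{m,n}$, then $\tilde H_d(\M[m,n];\Z)=0$ by Theorem~\ref{smallest-thm}. If $d\ge m$, then $d>\dim\M[m,n]$ and $\tilde H_d(\M[m,n];\Z)=0$ trivially. The only triple outside the two ranges not of one of these forms is $d=m-1$ with $n=m$, and here I would prove directly that $\tilde H_{m-1}(\M[m,m];\Z)=0$: each $(m-2)$-dimensional face of $\M[m,m]$ omits exactly one vertex from each block and hence lies in a unique maximal face, so the boundaries of distinct maximal faces of $\M[m,m]$ have pairwise disjoint supports; as each such boundary is nonzero, $\partial$ is injective on $\tilde C_{m-1}(\M[m,m];\Z)$. (Alternatively, this follows by induction on $m$ from the $00$-$01$-$11$ sequence of Section~\ref{exseq00-01-11-sec} in top degree, using that $\tilde H_{m-1}(\M[m,m-1])$ vanishes for dimension reasons and $\tilde H_{m-2}(\M[m-1,m-1])$ vanishes by induction.) A short bookkeeping check confirms that these three cases exhaust the complement of the two ranges.

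\emph{Nonvanishing.} Now assume $(k,a,b)$ lies in one of the two ranges, and split into three cases. (1) If $a\ge1$ and $b=0$, then the five conditions equivalent to Friedman--Hanlon infiniteness, namely $(b-1)(a+b-1)\le k$, $a\ge0$, $b\ge0$, $a+b\ge1$, $k+a+3b\ge2$, all hold --- the first reads $1-a\le k$, which follows from $k\ge2-a$ --- so $\tilde H_d(\M[m,n];\Z)$ is infinite by Theorem~\ref{chessfinite-thm}. (2) If $(k,a,b)$ lies in the first range and moreover $(b-1)(a+b-1)\le k$, then again all five conditions hold (now $b\ge1$ gives $a+b\ge1$ and $k+a+3b\ge3$), so $\tilde H_d(\M[m,n];\Z)$ is infinite by Theorem~\ref{chessfinite-thm}. (3) Otherwise $(k,a,b)$ lies in the first range with $(b-1)(a+b-1)>k\ge0$, which forces $b\ge2$. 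If in addition $a\ge1$, or $a=0$ and $b\ge3$, then $\tilde H_d(\M[m,n];\Z)$ contains $3$-torsion by Theorem~\ref{chessallover-thm}. The only remaining triple is $a=0$, $b=2$: then $(b-1)(a+b-1)=1>k\ge0$ forces $k=0$, i.e.\ $(m,n,d)=(5,5,2)$, and $\tilde H_2(\M[5,5];\Z)\cong\Z_3\neq0$ by Theorem~\ref{m55-thm}. In all cases $\tilde H_d(\M[m,n];\Z)\neq0$.

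I do not expect a serious obstacle: the genuine content sits in the four cited theorems, and the argument is essentially a case analysis in the $(k,a,b)$-plane. The one new ingredient is the vanishing of $\tilde H_{m-1}(\M[m,m];\Z)$, which is elementary --- note that Friedman--Hanlon only gives that this group is \emph{finite}, which by itself would not settle the top degree. Thus the ``hard part'' is organizational: confirming that the case split is exhaustive, that the dictionary between $(m,n,d)$ and $(k,a,b)$ is applied without slips, and that the delicate small instances --- the base case $\M[5,5]$ and the Friedman--Hanlon exceptional pairs $(6,6)$, $(7,7)$, $(8,9)$ --- fall into the cases I expect.
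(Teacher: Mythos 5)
Your proposal is correct and follows essentially the same route as the paper: the same case analysis in $(k,a,b)$, combining the connectivity bound (Theorem~\ref{smallest-thm}), the Friedman--Hanlon criterion (Theorem~\ref{chessfinite-thm}), the higher-degree $3$-torsion result (Theorem~\ref{chessallover-thm}), and the $\M[5,5]$ computation (Theorem~\ref{m55-thm}). The only difference is that you supply an explicit (and valid) argument that $\tilde{H}_{m-1}(\M[m,m];\Z)=0$, a point the paper asserts without detail; it also follows from Theorem~\ref{chessfinite-thm} combined with the standard fact that top-degree simplicial homology is free.
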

  \begin{proof}
    For homology to exist, we certainly must have that $b \ge 0$, 
    and we restrict to $a \ge 0$ by assumption. Moreover, $b = 0$
    means that
    $d = m-1$, in which case there is homology only if $m \le n-1$,
    hence $a \ge 1$ and $k+a \ge 2$; for the latter inequality, recall
    that we restrict our attention to $m \ge 1$. Finally, $k < 0$
    reduces  to the case $b = 0$, because we then have homology 
    only if $n \ge 2m+2$ and $d=m-1$; apply
    Theorem~\ref{smallest-thm}.
    
    For the other direction, Theorem~\ref{chessallover-thm} yields
    that we only need to consider the following cases:

    $\bullet$
      $k \ge 0$, $a=0$, and $b=2$. By Theorem~\ref{chessfinite-thm}, 
      we have infinite homology for $a=0$ and $b=2$ if and only if $k
      \ge (b-1)(a+b-1) = a+1 = 1$. The remaining case is $(k,a,b) =
      (0,0,2) \Longleftrightarrow (m,n,d) = (5,5,2)$, in which case we
      have nonzero homology by Theorem~\ref{m55-thm}.

    $\bullet$
      $k \ge 0$, $a \ge 0$, and $b = 1$. This time,
      Theorem~\ref{chessfinite-thm} yields infinite homology 
      for $a \ge 0$ and $b=1$ as soon as $k \ge 0$.

    $\bullet$
      $k \ge 2-a$, $a \ge 1$, and $b=0$. By yet another application of
      Theorem~\ref{chessfinite-thm}, we have infinite homology for
      $b=0$ whenever $a \ge 1$, $k \ge 1-a$, and $k+a \ge 2$. 
      Since the third inequality implies the second, we are done.
  \end{proof}

  \begin{conjecture}[{\shareshian} \& Wachs \cite{ShWa}]
    For $1 \le m \le n$, the group $\tilde{H}_{d}(\M[m,n]; \Z)$
    contains $3$-torsion if and only if 
    \[
    \left\{
    \begin{array}{ccl}
      m \le n \le 2m-5 \\ \\
      \left\lceil\frac{m+n-4}{3}\right\rceil \le d \le m-3
    \end{array}
    \right.
    \Longleftrightarrow
    \left\{
    \begin{array}{ccl}
      k &\ge& 0\\
      a &\ge& 0\\
      b &\ge& 2.
    \end{array}
    \right.
    \]
    \label{chesstorsion2-conj}
  \end{conjecture}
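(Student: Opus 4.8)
\medskip
\noindent\emph{A strategy toward Conjecture~\ref{chesstorsion2-conj}.}
The assertion is an ``if and only if'', and the two implications call for rather different methods. For the existence (``if'') direction, almost the entire region $k\ge 0$, $a\ge 0$, $b\ge 2$ is already available: the subregion $a\ge 1$ and the subregion $a=0$, $b\ge 3$ are exactly what Theorem~\ref{chessallover-thm} provides, the point $(m,n,d)=(5,5,2)$ (that is, $k=a=0$, $b=2$) is Theorem~\ref{m55-thm}, and $(m,n,d)\in\{(6,6,3),(7,7,4)\}$ (that is, $a=0$, $b=2$, $k\in\{1,2\}$) is covered by Theorem~\ref{chesstorsion-thm} because there $d=\nu_{m,m}$. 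The one family not reached by these results is $m=n$, $d=m-3$, $m\ge 8$, equivalently $a=0$, $b=2$, $k\ge 3$; the induction behind Theorem~\ref{chessallover-thm} breaks here because the base cycle it would require lives in $\M[m_0,n_0]$ with $m_0=4$ and $n_0=3$, violating the hypothesis $m_0\le n_0\le 2m_0-5$ of~(\ref{ab-eq}). To close this gap I would look for a replacement base case: produce an explicit cycle of the form $z_{k+1,k+2}\wedge\gamma'$, in the notation of Section~\ref{higherchess-sec}, with $\gamma'$ a suitable cycle on the complementary vertices, and verify — after transporting it into a matching complex $\M$ and invoking Corollary~\ref{torsionallover-cor} together with Theorem~\ref{bouctor-thm} — that it has order three over $\Z_3$; failing that, one can instead run the $\Gamma$-21-23 and $00$-$\Gamma$-11 sequences of Sections~\ref{exseqG-21-23-sec} and~\ref{exseq00-G-11-sec} directly for $\M[8,8]$, in the style of the proof of Theorem~\ref{m55-thm}, and then propagate from $m=8$ upward using the $\Z_3$-dimension bounds of Theorem~\ref{boundsintro-thm}.

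\medskip
\noindent For the nonexistence (``only if'') direction, note that $a\ge 0$ always holds (since $m\le n$), so a triple $(k,a,b)$ outside the stated region satisfies $b=0$, or $b=1$, or $b\ge 2$ together with $k<0$. In the last case $d=m-1-b\le m-3$ and $d\le\lceil\frac{m+n-4}{3}\rceil-1$, hence $d\le\nu_{m,n}-1$, so $\tilde H_d(\M[m,n];\Z)=0$ by Theorem~\ref{smallest-thm} and there is no torsion of any kind. If $b=0$ then $d=m-1=\dim\M[m,n]$, and the top homology of a simplicial complex is always free abelian (being a subgroup of the free group of top chains), so again no torsion occurs. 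The remaining and genuinely hard case is $b=1$, i.e.\ $d=m-2$: for $n\ge 2m-1$ the complex $\M[m,n]$ coincides with its vertex-decomposable $\nu_{m,n}$-skeleton, hence is a wedge of $(m-1)$-spheres and $\tilde H_{m-2}$ vanishes; for $m\le n\le 2m-2$ the group $\tilde H_{m-2}(\M[m,n];\Z)$ is infinite by Theorem~\ref{chessfinite-thm}, and one must show it is in fact torsion-free (this subsumes the still-open part of Conjecture~\ref{chesstorsion1-conj}). Here I would try to determine the \emph{exact} value of $\dim_{\Z_3}\tilde H_{m-2}(\M[m,n];\Z_3)$, not merely bound it, by running the long exact sequences of Sections~\ref{exseq00-G-11-sec} and~\ref{exseqG-21-23-sec} (and their symmetric counterparts) in degree $m-2$, and then compare with Friedman and Hanlon's explicit formula for the rational Betti numbers: by universal coefficients $\dim_{\Z_3}\tilde H_{m-2}(\M[m,n];\Z_3)=\mathrm{rank}\,\tilde H_{m-2}(\M[m,n];\Z)+\dim_{\Z_3}\mathrm{Tor}(\tilde H_{m-3}(\M[m,n];\Z),\Z_3)$, so matching the left-hand side with the Friedman--Hanlon rank would force the $\mathrm{Tor}$-term, and hence (with the analogous identity in degree $m-1$) all torsion in degree $m-2$, to vanish.

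\medskip
\noindent The main obstacle is precisely the two places where the present machinery stops: producing $3$-torsion in $\tilde H_{m-3}(\M[m,m];\Z)$ for $m\ge 8$, and excluding $3$-torsion in the ``second-from-top'' group $\tilde H_{m-2}(\M[m,n];\Z)$ for $m\le n\le 2m-2$. Both appear to require genuinely new input: either a long exact sequence finer than the four used here, or a single base-case computation (for instance, of $\tilde H_{5}(\M[8,8];\Z_3)$) from which Theorems~\ref{torsionallover-thm} and~\ref{boundsintro-thm} and the exact sequences could then carry the conclusion to all larger $(m,n)$.
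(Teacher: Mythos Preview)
This statement is a \emph{conjecture}, not a theorem: the paper does not prove it, and the text immediately following the conjecture explicitly lists the cases that remain open. Your submission is likewise labeled a strategy rather than a proof, so there is no ``paper's proof'' to compare against. That said, your case analysis is essentially correct and matches the paper's own post-conjecture discussion: the ``if'' direction is settled except for $a=0$, $b=2$, $k\ge 3$ (equivalently $\tilde{H}_{m-3}(\M[m,m];\Z)$ for $m\ge 8$), and the ``only if'' direction reduces to the case $b=1$.

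The paper sharpens the open $b=1$ window a bit more than you do. The case $n=m+1$ is dispatched because $\M[m,m+1]$ is an orientable pseudomanifold, so $\tilde{H}_{m-2}$ is torsion-free; the case $n=2m-2$ is handled by Shareshian and Wachs. What the paper records as genuinely open for $b=1$ is only $m+2\le n\le 2m-3$ with $m\ge 7$, i.e., $a\ge 2$, $k\ge 1$.

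Two technical points in your proposed attack need repair. First, you suggest propagating $3$-torsion from $\M[8,8]$ upward ``using the $\Z_3$-dimension bounds of Theorem~\ref{boundsintro-thm}''; but that theorem furnishes only \emph{upper} bounds on $\dim_{\Z_3}$, which cannot establish nonvanishing. The available propagation mechanism is Corollary~\ref{torsionallover-cor}, and that requires a cycle of the form $z_{k,k+1}\wedge\gamma$ --- exactly your first suggestion, so the second fallback does not work as stated. Second, your Universal Coefficients identity is missing a term: $\dim_{\Z_3}\tilde{H}_{m-2}(\M[m,n];\Z_3)$ equals the free rank of $\tilde{H}_{m-2}$ \emph{plus} the $3$-rank of the torsion subgroup of $\tilde{H}_{m-2}$ \emph{plus} $\dim_{\Z_3}\mathrm{Tor}(\tilde{H}_{m-3},\Z_3)$. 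Your parenthetical about ``the analogous identity in degree $m-1$'' is in fact what would control the $3$-torsion in $\tilde{H}_{m-2}$ (since the top homology is free), so the underlying logic is salvageable, but the displayed formula is not correct as written.
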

  Note that Conjecture~\ref{chesstorsion2-conj} implies
  Conjecture~\ref{chesstorsion1-conj}.
  Conjecture~\ref{chesstorsion2-conj}
  remains unsettled in the following cases:
  \begin{itemize}
  \item
    $d=m-2$: $9 \le m+2 \le n \le 2m-3$. Equivalently, $k \ge 1$, $a
    \ge 2$, and $b = 1$. 
    Conjecture: There is no $3$-torsion.
  \item
    $d=m-3$: 
    $8 \le m = n$.
    Equivalently,
    $k \ge 3$, $a = 0$, and $b = 2$.  
    Conjecture: There is $3$-torsion.
  \end{itemize}
  The conjecture is fully settled for $n = m+1$ and $n \ge
  2m-2$; see {\shareshian} and Wachs \cite{ShWa} for the case
  $n=2m-2$, and use Theorem~\ref{smallest-thm} for the case $n \ge
  2m-1$.
  For the case $n=m+1$, we have that $\tilde{H}_{m-2}(\M[m,m+1];\Z)$
  is torsion-free, because $\M[m,m+1]$ is an orientable
  pseudomanifold; see Spanier \cite[Ex. 4.E.2]{Spanier}. 

  \subsection{Bounds on the homology over $\Z_3$}
  \label{boundschess-sec}

  Fix a field $\Field$ and let 
  \begin{eqnarray*}
  \betti{m,n}{d} &=&  \dim_\Field \tilde{H}_d(\M[m,n];\Field);\\
  \abetti{m,n}{d} &=&  \dim_\Field \tilde{H}_d(\Mupp[m,n];\Field);
  \end{eqnarray*}
  $\Mupp[m,n]$ is defined as in (\ref{Gamma-eq}).
  \begin{lemma}
    For each $m \ge 2$ and $n \ge 3$, we have that
    \[
    \betti{m,n}{d} \le
    \betti{m-2,n-1}{d-1} 
    + (m-2) \betti{m-1,n-1}{d-1}
    + 2\mbox{$\binom{n-1}{2}$}\betti{m-2,n-3}{d-2}.
    \]
    Thus, by symmetry,
    \[
    \betti{m,n}{d} \le
    \betti{m-1,n-2}{d-1} + 
    (n-2) \betti{m-1,n-1}{d-1} + 
    2\mbox{$\binom{m-1}{2}$}\betti{m-3,n-2}{d-2} 
    \]
    whenever $m \ge 3$ and $n \ge 2$. 
    \label{dmtchess-lem}
  \end{lemma}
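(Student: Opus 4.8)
The plan is to derive the inequality by stringing together the two long exact sequences from Sections~\ref{exseq00-G-11-sec} and \ref{exseqG-21-23-sec}, working throughout over the field $\Field$ so that each exact sequence of $\Field$-vector spaces yields a subadditivity bound on dimensions; indeed, exactness of $A \to B \to C$ forces $\dim_\Field B \le \dim_\Field A + \dim_\Field C$. The hypotheses $m \ge 2$, $n \ge 3$ are exactly what is needed for both sequences to be available (the $\Gamma$-21-23 sequence requires $m \ge 2$ and $n \ge 3$, the 00-$\Gamma$-11 sequence requires $m,n \ge 1$).

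First I would apply the 00-$\Gamma$-11 sequence. The exact segment $\hat{P}^{m-1,n-1}_{d} \to \tilde{H}_{d}(\Mupp[m,n]) \to \tilde{H}_{d}(\M[m,n]) \to \hat{P}^{m-1,n-1}_{d-1}$ shows that $\tilde{H}_d(\M[m,n])$ is an extension of a subspace of $\hat{P}^{m-1,n-1}_{d-1}$ by a quotient of $\tilde{H}_d(\Mupp[m,n])$, so $\betti{m,n}{d} \le \abetti{m,n}{d} + \dim_\Field \hat{P}^{m-1,n-1}_{d-1}$. Since $\hat{P}^{m-1,n-1}_{d-1}$ is the direct sum over $s \in [3,m]$ of copies of $\tilde{H}_{d-1}(\M[{[m]\setminus\{s\},[2,n]}])$, and $\M[{[m]\setminus\{s\},[2,n]}] \cong \M[m-1,n-1]$ because $\M[S,T]$ depends only on $|S|$ and $|T|$, there are exactly $m-2$ such summands and $\dim_\Field \hat{P}^{m-1,n-1}_{d-1} = (m-2)\,\betti{m-1,n-1}{d-1}$.

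Next I would bound $\abetti{m,n}{d}$ by means of the $\Gamma$-21-23 sequence. Exactness of $R^{m-2,n-3}_{d-1} \to Q^{m-2,n-1}_{d-1} \to \tilde{H}_{d}(\Mupp[m,n]) \to R^{m-2,n-3}_{d-2}$ gives $\abetti{m,n}{d} \le \dim_\Field Q^{m-2,n-1}_{d-1} + \dim_\Field R^{m-2,n-3}_{d-2}$. Here $Q^{m-2,n-1}_{d-1}$ is a single copy of $\tilde{H}_{d-1}(\M[{[3,m],[2,n]}]) \cong \tilde{H}_{d-1}(\M[m-2,n-1])$, of dimension $\betti{m-2,n-1}{d-1}$, while $R^{m-2,n-3}_{d-2}$ is the direct sum over \emph{ordered} pairs of distinct $s,t \in [2,n]$ of copies of $\tilde{H}_{d-2}(\M[{[3,m],[2,n]\setminus\{s,t\}}]) \cong \tilde{H}_{d-2}(\M[m-2,n-3])$; there are $(n-1)(n-2) = 2\binom{n-1}{2}$ such pairs, so $\dim_\Field R^{m-2,n-3}_{d-2} = 2\binom{n-1}{2}\,\betti{m-2,n-3}{d-2}$. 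Substituting this bound on $\abetti{m,n}{d}$ into the bound on $\betti{m,n}{d}$ from the previous paragraph yields the first displayed inequality.

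The second inequality follows by symmetry: since $\M[m,n] \cong \M[n,m]$ and hence $\betti{m,n}{d} = \betti{n,m}{d}$, one applies the first inequality with the block sizes interchanged (which is legitimate provided $n \ge 2$ and $m \ge 3$) and rewrites each $\betti{p,q}{e}$ as $\betti{q,p}{e}$. The argument has no genuine obstacle; the only point requiring care is the combinatorial bookkeeping of the index sets — that $\hat{P}$ ranges over $s \in [3,m]$, giving $m-2$ terms rather than $m$, and that $R$ ranges over ordered pairs of distinct elements of $[2,n]$, giving $(n-1)(n-2)$ terms — together with the observation that each chessboard complex appearing as a summand is determined up to isomorphism by its two block sizes.
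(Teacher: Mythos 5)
Your proposal is correct and follows essentially the same route as the paper: the 00-$\Gamma$-11 sequence gives $\betti{m,n}{d} \le \abetti{m,n}{d} + (m-2)\betti{m-1,n-1}{d-1}$, the $\Gamma$-21-23 sequence gives $\abetti{m,n}{d} \le \betti{m-2,n-1}{d-1} + 2\binom{n-1}{2}\betti{m-2,n-3}{d-2}$, and summing plus the $m \leftrightarrow n$ symmetry yields both inequalities. Your bookkeeping of the summands ($m-2$ values of $s$, and $(n-1)(n-2)$ ordered pairs $s \neq t$) matches the paper's count exactly.
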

  \begin{proof}
    By the long exact 00-$\Gamma$-11 sequence in
    Section~\ref{exseq00-G-11-sec}, we have that
    \[
    \betti{m,n}{d} \le \abetti{m,n}{d} + (m-2) \betti{m-1,n-1}{d-1}.
    \]
    Moreover, the long exact $\Gamma$-21-23 sequence in
    Section~\ref{exseqG-21-23-sec} yields the inequality
    \[
    \abetti{m,n}{d} \le \betti{m-2,n-1}{d-1} + 
    2\mbox{$\binom{n-1}{2}$} \betti{m-2,n-3}{d-2}.
    \]
    Summing, we obtain the desired inequality.
  \end{proof}
  Define
  $\hat{\beta}_{k}^{a,b} = \betti{m,n}{d}$,
  where $k$, $a$, and $b$ are defined as in (\ref{mnd2kab-eq}). We may
  rewrite the second inequality in Lemma~\ref{dmtchess-lem} as
  follows:
  \begin{corollary}
    We have that
    \begin{equation}
      \hat{\beta}_{k}^{a,b} \le
      \hat{\beta}_{k}^{a-1,b} +  (k+2a+3b-3)\hat{\beta}_{k-1}^{a,b} + 
      2\mbox{$\binom{k+a+3b-2}{2}$}\hat{\beta}_{k-1}^{a+1,b-1}
      \label{hatbetachess-eq}
    \end{equation}
    for $k \ge 0$, $a \ge 0$, and $b \ge 2$.
    \label{dmtchess-cor}
  \end{corollary}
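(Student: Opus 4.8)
The plan is to obtain Corollary~\ref{dmtchess-cor} as a direct transcription of the second inequality in Lemma~\ref{dmtchess-lem} under the change of variables (\ref{mnd2kab-eq}). Fix $k \ge 0$, $a \ge 0$, and $b \ge 2$, and set $m = k+a+3b-1$, $n = k+2a+3b-1$, $d = k+a+2b-2$ as in (\ref{mnd2kab-eq}), so that $\hat{\beta}_{k}^{a,b} = \betti{m,n}{d}$ by definition. First I would check that the standing hypotheses of Lemma~\ref{dmtchess-lem} hold: since $b \ge 2$ we get $m = k+a+3b-1 \ge 5 \ge 3$ and $n = k+2a+3b-1 \ge 5 \ge 2$, so the second inequality
\[
\betti{m,n}{d} \le \betti{m-1,n-2}{d-1} + (n-2)\betti{m-1,n-1}{d-1} + 2\mbox{$\binom{m-1}{2}$}\betti{m-3,n-2}{d-2}
\]
is available.

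Next I would translate each term on the right-hand side back into $(k,a,b)$-coordinates using the forward direction of (\ref{mnd2kab-eq}), namely $k = 3d-m-n+4$, $a = n-m$, $b = m-d-1$. Substituting $(m,n,d) \mapsto (m-1,n-2,d-1)$ into these three linear forms yields $(k,a-1,b)$; substituting $(m-1,n-1,d-1)$ yields $(k-1,a,b)$; and substituting $(m-3,n-2,d-2)$ yields $(k-1,a+1,b-1)$. Hence $\betti{m-1,n-2}{d-1} = \hat{\beta}_{k}^{a-1,b}$, $\betti{m-1,n-1}{d-1} = \hat{\beta}_{k-1}^{a,b}$, and $\betti{m-3,n-2}{d-2} = \hat{\beta}_{k-1}^{a+1,b-1}$. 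Finally I would rewrite the two coefficients in the same variables: $n-2 = k+2a+3b-3$ and $m-1 = k+a+3b-2$. Plugging these identifications into the displayed inequality produces exactly (\ref{hatbetachess-eq}), which is the assertion of the corollary.

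The argument is pure bookkeeping, so there is no substantial obstacle; the one point requiring care is to confirm that the three integer shifts of $(m,n,d)$ appearing in Lemma~\ref{dmtchess-lem} correspond, under (\ref{mnd2kab-eq}), precisely to the shifts $(k,a,b)\mapsto(k,a-1,b)$, $(k-1,a,b)$, and $(k-1,a+1,b-1)$ recorded in (\ref{hatbetachess-eq}), and that the coefficients match. Each of these is a one-line linear computation, and none of the Betti numbers involved fall outside the range where $\betti{\cdot\,,\cdot}{\cdot}$ is meaningful, so no case distinction is needed.
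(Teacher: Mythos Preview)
Your proposal is correct and matches the paper's approach exactly: the corollary is stated in the paper simply as a rewriting of the second inequality in Lemma~\ref{dmtchess-lem} under the change of variables~(\ref{mnd2kab-eq}), and your argument carries out precisely that bookkeeping, including the verification that $m\ge 3$ and $n\ge 2$ hold in the given range.
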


  \begin{theorem}
    With $\Field = \Z_3$ and $d = \nu_{m,n}$, the second bound in
    Lemma~{\rm\ref{dmtchess-lem}} is sharp whenever
    $m \le n \le 2m-5$,
    $m+n \equiv 1 \pmod{3}$, and $(m,n) \neq (5,5)$. 
    Equivalently, the bound is sharp whenever $k = 0$, $a \ge 0$, $b
    \ge 2$, and $(k,a,b) \neq (0,0,2)$, where $k$, $a$, and $b$ are
    defined as in {\rm(\ref{mnd2kab-eq})}.
    \label{basecase-thm}
  \end{theorem}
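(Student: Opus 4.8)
The plan is to show that all the inequalities used to derive the second bound in Lemma~\ref{dmtchess-lem} are in fact equalities in the stated range, by tracking exactly where information could be lost and checking it is not. Recall that the second bound comes from symmetrizing the first, which in turn is assembled from two pieces: the 00-$\Gamma$-11 sequence gives $\betti{m,n}{d} \le \abetti{m,n}{d} + (m-2)\betti{m-1,n-1}{d-1}$, and the $\Gamma$-21-23 sequence gives $\abetti{m,n}{d} \le \betti{m-2,n-1}{d-1} + 2\binom{n-1}{2}\betti{m-2,n-3}{d-2}$. (After the symmetry swap $m \leftrightarrow n$, these become the terms appearing in the second bound.) Equality in both amounts to the connecting/boundary maps in these two long exact sequences vanishing in the relevant degrees. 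So the first step is to reduce the theorem to two vanishing statements about specific homomorphisms between homology groups of chessboard complexes one and two steps smaller.

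The second step is to exploit that we are sitting at the bottom of the nonvanishing range: with $k=0$ and $d=\nu_{m,n} = \frac{m+n-4}{3}$, the complexes $\M[m-1,n-1]$, $\M[m-2,n-1]$, $\M[m-2,n-3]$, and $\Mupp[m,n]$ are all being probed in \emph{their} bottom nonvanishing degree as well (one checks the arithmetic: the transformation \eqref{mnd2kab-eq} sends $(m,n,d)$ with $k=0$ to nearby triples still having $k=0$, possibly with $a$ or $b$ shifted). In that bottom degree, by Theorem~\ref{chesstorsion-thm} together with the hypothesis $m+n \equiv 1 \pmod 3$, the relevant homology groups over $\Z$ are either $\Z_3$ or (in the exceptional cases $(6,6),(7,7),(8,9)$ handled by Friedman--Hanlon, Theorem~\ref{chessfinite-thm}, and the remark after Theorem~\ref{chesstorsion-thm}) finite $3$-groups — and over $\Z_3$ the dimensions are governed by the recursion \eqref{hatbetachess-eq} with $k=0$, so the dimension counts are small and explicit. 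The idea is to run an induction on $m+n$: the base case is $(m,n)=(6,8)$ or the first few admissible pairs (the pair $(5,5)$ is \emph{excluded}, precisely because there the bound is not sharp — one of the maps is nonzero, as the computer-free computation in Theorem~\ref{m55-thm} implicitly shows), and the inductive step uses that once the smaller complexes have the predicted $\Z_3$-dimensions, a dimension count in the two exact sequences forces the connecting maps to vanish, since there is simply no room for them to be nonzero without making $\betti{m,n}{d}$ too small to accommodate the known nonzero $3$-torsion class $\gamma_{m,n}$ from \eqref{gammamn-eq}.

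The third step, which supplies the ``no room'' argument, is to produce enough independent cycles in $\tilde{H}_d(\M[m,n];\Z_3)$ to match the upper bound from below. Here I would use the explicit cycles: $\gamma_{m,n}$ and its translates/images under the various inclusion maps coming from the summands indexed by $s$ (in the $(m-2)\betti{m-1,n-1}{d-1}$ term) and by pairs $\{s,t\}$ (in the $\binom{n-1}{2}$ term). Concretely, each generator $\iota_p$-image of a bottom class of $\M[m-1,n-1]$ (resp.\ $\M[m-2,n-3]$) sitting inside $\M[m,n]$ via the relevant join decomposition should be shown to be nonzero and the whole collection linearly independent over $\Z_3$; the cleanest way is to detect them by restricting to suitable subcomplexes or by a compatible collection of maps to $\M[m-2,n-1]$-type complexes, much as $\varphi^*$ and $\psi^*$ are analyzed in the proof of Theorem~\ref{m55-thm}.

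The main obstacle I expect is precisely this last linear-independence bookkeeping: the exact sequences only give that \emph{images} have the right rank, and one must check that the cycles constructed by hand are genuinely independent in the limited homology of the \emph{large} complex $\M[m,n]$ — not merely independent upstairs in the direct sums $\hat P$, $Q$, $R$. In the $(m,n)=(5,5)$ case this fails, and understanding sharply why it fails there but succeeds for all larger admissible $(m,n)$ (essentially: for $(5,5)$ the relations among the $z_{uv}$ cut the dimension down by more than the bound predicts, whereas for $m+n$ larger the analogous relations are already accounted for inside the smaller complexes by induction) is the crux. I would isolate this as a lemma: for $k=0$ and $(m,n)\neq(5,5)$ in range, the map $\varphi^*$ in the $\Gamma$-21-23 sequence is surjective onto $Q^{m-2,n-1}_{d-1}$ exactly when it needs to be, and symmetrically for the 00-$\Gamma$-11 connecting map, and prove it by the dimension/induction argument above, with the $(5,5)$ exclusion entering as the failure of surjectivity there.
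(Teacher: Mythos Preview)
Your arithmetic check in step two is wrong, and this causes you to miss that the proof is essentially a one-liner. You assert that the smaller complexes appearing in the bound are ``being probed in their bottom nonvanishing degree as well,'' but two of the three are in fact probed \emph{below} it. Rewrite the second bound of Lemma~\ref{dmtchess-lem} in the $(k,a,b)$ coordinates, as in Corollary~\ref{dmtchess-cor}: for $k=0$ it reads
\[
\hat{\beta}_{0}^{a,b} \le \hat{\beta}_{0}^{a-1,b} + (2a+3b-3)\,\hat{\beta}_{-1}^{a,b} + 2\mbox{$\binom{a+3b-2}{2}$}\,\hat{\beta}_{-1}^{a+1,b-1}.
\]
The last two terms carry the subscript $k-1=-1$, i.e.\ they refer to homology in degree strictly below $\nu$; by the connectivity theorem of Bj\"orner \etal\ (Theorem~\ref{smallest-thm}) both are zero for $a\ge 0$, $b\ge 1$. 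The inequality therefore collapses to $\hat{\beta}_{0}^{a,b}\le\hat{\beta}_{0}^{a-1,b}$, and now both sides equal $1$ by Theorem~\ref{chesstorsion-thm}: the left side directly, and the right side because for $a\ge 1$ the pair $(a-1,b)$ again lies in the range of that theorem, while for $a=0$ one uses the symmetry $\hat{\beta}_{0}^{-1,b}=\hat{\beta}_{0}^{1,b-1}$ and needs $b-1\ge 2$, which is exactly where the exclusion $(k,a,b)\neq(0,0,2)$ enters.

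Thus none of your proposed machinery --- tracking connecting maps, induction on $m+n$, constructing explicit independent cycles, linear-independence bookkeeping --- is needed. Your plan is not so much incorrect as aimed at the wrong target: you set out to show that certain maps in the long exact sequences vanish, whereas the real point is that two of the three \emph{groups} themselves vanish, which makes sharpness automatic once one knows the left side is $1$.
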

  \begin{proof}
    Since $\hat{\beta}^{a,b}_0 = 1$ for $a \ge 0$ and $b \ge 2$ by
    Theorem~\ref{chesstorsion-thm}, 
    it suffices to prove that
    \begin{equation}
      \hat{\beta}_{0}^{a-1,b} +  (2a+3b-3)\hat{\beta}_{-1}^{a,b} + 
      2\mbox{$\binom{a+3b-2}{2}$}\hat{\beta}_{-1}^{a+1,b-1} = 1
    \label{basecase-eq}
    \end{equation}
    for all $a$ and $b$ as in the theorem;
    apply Corollary~\ref{dmtchess-cor}.
    Clearly, $\hat{\beta}_{0}^{a-1,b} = 1$; when $a=0$, use the fact
    that $\hat{\beta}_{0}^{-1,b} = \hat{\beta}_{0}^{1,b-1}$. Moreover, 
    Theorem~\ref{smallest-thm} yields that $\hat{\beta}_{-1}^{a,b} =
    0$ whenever $a \ge 0$ and $b \ge 1$. As a consequence, we are done.
  \end{proof}
  
  \begin{theorem}
    For each $k \ge 0$, there is a polynomial $f_k(a,b)$ of degree
    $3k$ such that  
    $\hat{\beta}^{a,b}_{k} \le f_{k}(a,b)$
    whenever $a \ge 0$ and $b \ge k+2$ and such that
    \[
    f_k(a,b) = \frac{1}{3^kk!}\left((a+3b)^3-9b^3\right)^{k} +
    \epsilon_k(a,b) 
    \]
    for some polynomial $\epsilon_k(a,b)$ of degree at most $3k-1$.
    Equivalently, 
    \[
    \betti{m,n}{d} \le f_{3d-m-n+4}(n-m,m-d-1)
    \]
    for $m \le n \le 2m-5$ and 
    $\frac{m+n-4}{3} \le d \le \frac{2m+n-7}{4}$. 
    \label{bettichess-thm}
  \end{theorem}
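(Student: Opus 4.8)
The plan is to establish the bound for $\hat{\beta}_k^{a,b}$ by induction on $k$, and then to translate it into the statement about $\betti{m,n}{d}$ through the substitution (\ref{mnd2kab-eq}). For $k=0$ one has $\hat{\beta}_0^{a,b}\le 1$ for all $a\ge 0$ and $b\ge 2$: over $\Z_3$ this is exactly $\tilde{H}_{\nu_{m,n}}(\M[m,n];\Z_3)\cong\Z_3$ from Theorem~\ref{chesstorsion-thm}, and over an arbitrary field it follows from Theorem~\ref{smallest-thm} and the universal coefficient theorem, the bottom nonvanishing integral group being $\Z_3$ and the group one degree below vanishing. Thus $f_0\equiv 1$ works, and since $\frac{1}{3^0 0!}\bigl((a+3b)^3-9b^3\bigr)^0=1$ we may take $\epsilon_0\equiv 0$.

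For the inductive step, assume $f_{k-1}$ has been constructed with the stated properties, and set
\[
g_k(a,b) := (k+2a+3b-3)\,f_{k-1}(a,b)+2\binom{k+a+3b-2}{2}f_{k-1}(a+1,b-1),
\]
a polynomial in $(a,b)$ of degree $3k-1$. By the recursion (\ref{hatbetachess-eq}) of Corollary~\ref{dmtchess-cor} and the inductive hypothesis (valid at $(a,b)$ and at $(a+1,b-1)$ as soon as $b\ge k+2$), we get $\hat{\beta}_k^{a,b}\le\hat{\beta}_k^{a-1,b}+g_k(a,b)$ for all $a\ge 0$ and $b\ge k+2$. Since $\M[m,n]=\M[n,m]$, the reindexing (\ref{mnd2kab-eq}) gives the identity $\hat{\beta}_k^{-1,b}=\hat{\beta}_k^{1,b-1}$, so combining two instances of the recursion produces a first-order recursion in $b$ for the boundary sequence,
\[
\hat{\beta}_k^{0,b}\le\hat{\beta}_k^{0,b-1}+g_k(0,b)+g_k(1,b-1)\qquad(b\ge k+3).
\]
Telescoping this down to $b=k+2$ bounds $\hat{\beta}_k^{0,b}$ by the constant $\hat{\beta}_k^{0,k+2}$ plus a polynomial in $b$ of degree $3k$; telescoping the $a$-recursion then yields
\[
\hat{\beta}_k^{a,b}\le\hat{\beta}_k^{0,b}+\sum_{a'=1}^{a}g_k(a',b)=:f_k(a,b),
\]
a polynomial of degree $3k$ in $(a,b)$, valid for $a\ge 0$ and $b\ge k+2$. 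At every step one checks that the second argument stays $\ge 2$ (so Corollary~\ref{dmtchess-cor} applies) and $\ge k+1$ at the points fed into $f_{k-1}$ (so the inductive hypothesis applies).

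It remains to read off the leading term. Write $P(a,b)=(a+3b)^3-9b^3=a^3+9a^2b+27ab^2+18b^3$; this is homogeneous of degree three and $\partial P/\partial a=3(a+3b)^2$. Using $f_{k-1}=\frac{1}{3^{k-1}(k-1)!}P^{k-1}+\epsilon_{k-1}$ and homogeneity of $P$ (so that $f_{k-1}(a+1,b-1)$ and $f_{k-1}(a,b)$ agree up to lower-degree terms), the degree-$(3k-1)$ part of $g_k(a,b)$ equals $\frac{1}{3^{k-1}(k-1)!}(a+3b)^2P(a,b)^{k-1}$. Since $(a+3b)^2P^{k-1}=\frac{1}{3k}\,\partial_a(P^k)$, summation over $a'$ gives $\sum_{a'=1}^{a}g_k(a',b)=\frac{1}{3^k k!}\bigl(P(a,b)^k-P(0,b)^k\bigr)$ up to terms of degree $\le 3k-1$, and the parallel computation for the $b$-telescoping gives $\hat{\beta}_k^{0,b}\le\frac{1}{3^k k!}P(0,b)^k$ up to terms of degree $\le 3k-1$; here $P(0,b)=18b^3$. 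The two copies of $\frac{1}{3^k k!}P(0,b)^k$ cancel, leaving $f_k(a,b)=\frac{1}{3^k k!}P(a,b)^k+\epsilon_k(a,b)$ with $\deg\epsilon_k\le 3k-1$. Finally, $k\ge 0$, $a\ge 0$, $b\ge k+2$ is equivalent under (\ref{mnd2kab-eq}) to $m\le n$ and $\frac{m+n-4}{3}\le d\le\frac{2m+n-7}{4}$, and these conditions force $2m-n=k+3b-1\ge 5$, i.e.\ $n\le 2m-5$, which gives the equivalent formulation for $\betti{m,n}{d}$.

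The main obstacle is the bookkeeping in the last step: one must verify that \emph{only} the advertised degree-$3k$ monomials survive both telescopings, that every error term has degree at most $3k-1$, and in particular that the spurious contributions $\frac{1}{3^k k!}P(0,b)^k$ produced by the $a$-summation and by the $b$-recursion for $\hat{\beta}_k^{0,b}$ cancel \emph{exactly}, so that the leading coefficient is precisely $\frac{1}{3^k k!}$. A secondary nuisance is keeping all the range hypotheses (namely $b\ge 2$ for Corollary~\ref{dmtchess-cor}, second argument $\ge k+1$ for the hypothesis on $f_{k-1}$, and the validity of $\hat{\beta}_k^{-1,b}=\hat{\beta}_k^{1,b-1}$) simultaneously in force throughout the double induction.
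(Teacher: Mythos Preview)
Your proof is correct and follows essentially the same approach as the paper: induction on $k$ with base case from Theorem~\ref{chesstorsion-thm}, the recursive inequality from Corollary~\ref{dmtchess-cor}, telescoping first in $a$ and then handling $\hat{\beta}_k^{0,b}$ via the symmetry $\hat{\beta}_k^{-1,b}=\hat{\beta}_k^{1,b-1}$ and a second telescoping in $b$, followed by a leading-term check. Your use of the identity $(a+3b)^2 P^{k-1}=\frac{1}{3k}\,\partial_a(P^k)$ is a tidy shortcut for the leading-term computation that the paper carries out via an explicit binomial expansion, but the argument is otherwise identical.
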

  \begin{proof}
    The case $k=0$ is a consequence of
    Theorem~\ref{chesstorsion-thm}. 
    Assume that $k \ge 1$ and $b > k+2$.

    First, assume that $a > 0$. 
    Induction and Corollary~\ref{dmtchess-cor} yield that
    \begin{eqnarray*}
      \hat{\beta}_{k}^{a,b} -
      \hat{\beta}_{k}^{a-1,b} &\le&  
      (k+2a+3b-3)f_{k-1}(a,b) \nonumber\\
      &+& 
      2\mbox{$\binom{k+a+3b-2}{2}$} f_{k-1}(a+1,b-1),
      \label{hatbetasimpchess2-eq}
    \end{eqnarray*}
    where $f_{k-1}$ is a polynomial with properties as in the
    theorem. The right-hand side is of the form
    \[
    g_k(a,b) = 
    \frac{1}{3^{k-1}(k-1)!}\left((a+3b)^3-9b^3\right)^{k-1}(a+3b)^2 +
    h_k(a,b),
    \]
    where $h_k(a,b)$ is a polynomial of degree at most $3k-2$.
    Now, 
    \begin{eqnarray*}
      & & 
      \frac{1}{3^{k-1} (k-1)!}\left((a+3b)^3-9b^3\right)^{k-1}(a+3b)^2
      \\
      &=&
      \frac{1}{3^{k-1} (k-1)!} \sum_{\ell=0}^{k-1} \binom{k-1}{l}
      (a+3b)^{3k-3\ell-1} (-9b^3)^{\ell}.
    \end{eqnarray*}
    Summing over $a$, we obtain that
    \[
    \hat{\beta}_{k}^{a,b} \le 
    \hat{\beta}_{k}^{0,b} +  \sum_{i=1}^a g_k(i,b).
    \]
    The right-hand side is a polynomial in $a$ and $b$ with dominating
    term
    \begin{eqnarray}
      & &
      \frac{1}{3^{k-1} (k-1)!} \sum_{\ell=0}^{k-1} \binom{k-1}{l}
      \frac{(a+3b)^{3k-3\ell}-(3b)^{3k-3\ell}}{3k-3\ell}
      (-9b^3)^{\ell} \nonumber \\
      &=&
      \frac{1}{3^{k} k!} \sum_{\ell=0}^{k} \binom{k}{l}
      \left(((a+3b)^3)^{k-\ell} - (27b^3)^{k-\ell}\right)
      (-9b^3)^{\ell}
      \nonumber \\
      &=&
      \frac{1}{3^{k} k!} \left((a+3b)^3-9b^3\right)^k
      - \frac{1}{3^{k} k!}(18b^3)^{k}. \label{tailbterm-eq}
    \end{eqnarray}

    Proceeding with $\hat{\beta}_{k}^{0,b}$ for $b \ge k+3$, note that  
    $\hat{\beta}_{k}^{-1,b} = \hat{\beta}_{k}^{1,b-1}$. As a
    consequence,  
    \begin{eqnarray*}
      \hat{\beta}_{k}^{0,b} &\le&
      \hat{\beta}_{k}^{1,b-1} +  (k+3b-3)\hat{\beta}_{k-1}^{0,b} + 
      2\mbox{$\binom{k+3b-2}{2}$}\hat{\beta}_{k-1}^{1,b-1} \\
      &\le& \hat{\beta}_{k}^{0,b-1} +  
      (k+3b-4)\hat{\beta}_{k-1}^{1,b-1} + 
      2\mbox{$\binom{k+3b-4}{2}$}\hat{\beta}_{k-1}^{2,b-2}\\
      &+& (k+3b-3)\hat{\beta}_{k-1}^{0,b} + 
      2\mbox{$\binom{k+3b-2}{2}$}\hat{\beta}_{k-1}^{1,b-1}.
    \end{eqnarray*}
    Using induction, we conclude that
    \begin{eqnarray*}
    \hat{\beta}_{k}^{0,b} &\le&
    \hat{\beta}_{k}^{0,b-1} +  
    9b^2f_{k-1}(2,b-2) +  
    9b^2f_{k-1}(1,b-1) +O(b^{3k-2}) \\
    &=& 18b^2 \frac{(18b^3)^{k-1}}{3^{k-1}(k-1)!} +O(b^{3k-2})
    = \frac{18^kb^{3k-1}}{3^{k-1}(k-1)!} +O(b^{3k-2}),
    \end{eqnarray*}
    where $f_{k-1}$ is a polynomial with properties as in the
    theorem. Summing over $b$, we may conclude that
    $\hat{\beta}_{k}^{0,b}$ is bounded by a polynomial in $b$ with 
    dominating term $\frac{18^kb^{3k}}{3^{k}k!}$. 
    Combined with (\ref{tailbterm-eq}), this yields the theorem.
  \end{proof}

  \section*{Acknowledgments}

  I thank an anonymous referee for several useful comments.
  This research was carried out at the Technische Universit\"at Berlin
  and at the Massachusetts Institute of Technology in Cambridge, MA.

  \newpage

  \begin{table}
    \caption{The exponent $\epsilon_{m,n}$ of
      $\tilde{H}_{\nu_{m,n}}(\M[m,n]; \Z)$ 
      for $m \le n \le 2m-5$ and $(m,n) \not\in 
      \{(6,6),(7,7),(8,9)\}$.
      On the right we give the values $k$, $a$, and $b$ defined as in
      (\ref{mnd2kab-eq}).}
    \begin{center}
    \begin{tabular}{|c|c|c||c|c|c|}
      \hline
      $2m-n$ & Restriction &  $\epsilon_{m,n}$ &
      $k$ & $a$ & $b$ \\
      \hline
      \hline
      $5$ &  &  $3$ & $0$ & $\ge 0$ & $2$ \\
      \cline{1-5}
      $6$ & $m \ge 7$ &  divides $\epsilon_{7,8}$ 
      & $1$ & $\ge 1$ &  \\
      \cline{1-5}
      $7$   & $m \ge 9$ &  divides $\epsilon_{9,11}$ 
      & $2$ & $\ge 2$ &  \\
      \hline
      $8$ &  &  $3$ 
      & $0$ & $\ge 0$ & $3$\\
      \cline{1-5}
      $9$ &    &  divides $\gcd(9,\epsilon_{9,9})$ 
      & $1$ & $\ge 0$ & \\
      \cline{1-5}
      $10$ & $m=10$     &  multiple of $3$ 
      & $2$ & $=0$ &
      \\
      \cline{2-3} \cline{5-5}
      & $m \ge 11$ &  divides $\epsilon_{7,8}$ 
      & & $\ge 1$ &  \\
      \hline
      \hline
      $11+3t$ & $t \ge 0$ &  $3$ 
      & $0$ & $\ge 0$ & $4+t$ \\
      \cline{1-1}\cline{3-4}
      $12+3t$ &  &  divides $\gcd(9,\epsilon_{9,9})$ 
      & $1$ &  & 
      \\
      \cline{1-1}\cline{4-4}
      $13+3t$ & &  
      & $2$ & & 
      \\
      \hline
    \end{tabular}
    \end{center}
    \label{chessexp-fig}
  \end{table}

\end{document}